
\documentclass[english]{article}
\usepackage{amsmath,amssymb,latexsym,amsthm,mathrsfs,appendix}
\usepackage{color}

\usepackage{url}


\setcounter{MaxMatrixCols}{10}
\date{}

\def\theenumi{\arabic{enumi}}

\def\theenumii{\alph{enumii}}
\def\p@enumii{\theenumi.}

\def\theenumiii{\arabic{enumiii}}
\def\p@enumiii{(\theenumi)(\theenumii)}

\def\p@enumiv{\p@enumiii.\theenumiii}
\pagestyle{plain}

\newtheorem{theorem}{Theorem}[section]

\newtheorem{corollary}[theorem]{Corollary}

\newtheorem{proposition}[theorem]{Proposition}
\newtheorem{lemma}[theorem]{Lemma}

\theoremstyle{definition}
\newtheorem{example}[theorem]{Example}

\newtheorem{notation}[theorem]{Notation}

\newtheorem{definition}[theorem]{Definition}
\newtheorem{remark}[theorem]{Remark}

\newcommand{\ra}{\rightarrow}

\newcommand{\id}{\mbox{id}} \newcommand{\idm}{\mbox{\em id}}  
 \newcommand{\ids}{\mbox{\small id}}  \newcommand{\idt}{\mbox{\tiny id}}  
 
  \makeatother

\usepackage{babel}
\begin{document}

\title{Isoperimetric profiles and random walks on some groups defined by piecewise actions
}
\author{Laurent Saloff-Coste\thanks{%
Partially supported by NSF grant DMS 1404435 and DMS 1707589} \\
{\small Department of Mathematics}\\
{\small Cornell University} \and Tianyi Zheng \\
{\small Department of Mathematics}\\
{\small University of California San Diego} }
\maketitle

\begin{abstract}
We study the isoperimetric and spectral profiles of certain families of finitely 
generated  groups defined via actions on labelled Schreier graphs and simple {\em gluing}  of such. In one of our simplest constructions---the {\em pocket-extension} of a group $G$---this leads to the study of certain finitely generated subgroups of the full permutation group  $\mathbb S(G\cup \{*\})$. Some sharp estimates are obtained while many challenging questions remain.
 \end{abstract}

This work is dedicated to the memory of Harry Kesten who, among his many outstanding contributions to mathematics, initiated the study of random walks on groups.

\section{Introduction}\setcounter{equation}{0}

\subsection{A short historical perspective} The term {\em random walk} was introduced in a short note in the form of a question  that Karl Pearson sent to the journal Nature in 1905. 
The random walk in question took place in the plane.  In the second edition of {\em Le Calcul des Probabilit\'es}, published in 1912, Henri Poincar\'e  discusses the mixing of cards produced by repeated shuffles and explains how it is modeled by repeated multiplications of random elements in a permutation group. In 1921, 
Geoge P\'olya famously considered the dichotomy between recurrence and transience in the context of simple random walk on a $d$-dimensional grid. In 1940,
Kioshi Ito and  Yukiyosi Kawada considered repeated convolutions on compact groups. By the 1950's, the concept of random walks in $d$-dimensional space and on discrete  lattices in $d$-space was well established.  In 1950, Mark Kac won his first of two Chauvenet Prizes for an article titled {\em Random Walk and the Theory of Brownian Motion} published in the {\em Monthly} three years earlier. Also in 1950, Dvoretzky and Erd\"os wrote {\em Some Problems on Random Walk in Space} for the second Berkeley Symposium on Mathematics Statistics and Probability.  Except for sporadic interest in card shuffling problems and a few other isolated works, it is hard to find any reference before 1958 were multiplying random elements of a non-commutative group is mentioned. 

In the summer of 1956, Harry Kesten--who was then a student in Amsterdam--wrote to Mark Kac. He asked if he could come to Cornell to work under Kac's supervision. A fellowship was offered and Kesten came to Cornell that fall. In the spring of 1958, he defended his thesis titled {\em Symmetric Random Walks on Groups.}    The first sentence reads:\\ 

 {\em Let G be a countable group and let $A = \{a_1,a_2,\dots\}$ ($a_i\in G$) generate G. Consider the random walk on G in which every step consists
of right multiplication by $a_i$ or its inverse $a_i^{-1}$, each with probability $p_i $ ($p_i \ge0$, $2 \sum_i p_i= 1$).}\\

Kesten goes on to explain that the paper is concerned with the relations between properties of the highest eigenvalue $\lambda(G)$ of the associated linear operator on $\ell^2(G)$ and the structure of the group (Kesten uses the shorthand notation $\lambda(G)$ when it is clear which random walk is considered). The final section (section 5) discusses some open problems including the following:\\

{\em As mentioned in \S 3, it would be interesting to
find all groups with $\lambda(G) = 1$. Especially, since for every finite group, the
spectrum contains 1. A weak form of the Burnside conjecture would be:
 ``If G is finitely generated and every element has bounded (or more general,
finite) order, then $\lambda(G) = 1$.'' This would readily follow if one could prove the
converse of Corollary 3, i.e., ``If G has no free subgroups on more than 1
generator, then $\lambda(G) = 1$.'' However, the author was unable to prove or disprove
this. If this converse of Corollary 3 is not true, however, it might be
possible to construct a group G in which every element has finite order but
$\lambda(G) <1$.}\\

The paper end with:\\

{\em Note added in proof. Since this paper was submitted, the author proved
that $\lambda(G) = 1$ is equivalent to the existence of an invariant mean on G (cf.
Full Banach mean values on countable groups, Math. Scand. vol. 7 (1959)).
It seems that the Burnside conjecture has been disproved recently in
Russia. }\\

These direct quotes form Kesten's paper leave no doubt that his work introduced the subject of random walks on groups with a  strong emphasize on (a) understanding random walks on groups in general and (b) understanding the relations between the behavior of random walks and the structure of the underlying group. The last sentence of the ``note added in proof'' refers to P.S. Novikov's 1959 announcement that the Burnside group $B(m, n)$ is infinite for $n$ odd, $n > 71$. This announcement was premature. Only in 1968 did Adyan and Novikov succeed to prove that $B(m, n)$ is infinite for $n$ odd, $n > 4381$. See \url{http://www-gap.dcs.st-and.ac.uk/~history/HistTopics/Burnside_problem.html}
for a brief history of the Burnside Problem.\\

The first author learned directly from his colleague Harry Kesten that  the subject of Kesten's Ph.D. thesis evolved from a very specific question suggested by
Mark Kac. This question was perhaps related to a problem considered in 1954 by Richard Bellman in {\em Limit theorems for non-commutative operations. I.} \cite{Bellman}. The Mathematical Reviews entry for this article was written by J. Wolfowitz, one of the Cornell faculty who interacted with Kesten during his time as a graduate student. Kac's question concerned the behavior of some sort of dynamics that switches randomly between two 2 by 2 matrices. There is no trace of this question in Kesten's thesis although he would come back to the related problem of the study of products of random matrices in his famous joint work with H. Furstenberg which was initiated when both where visiting Princeton in 1958/59.  

It is notable that Kesten's thesis does not introduce random walks on groups as a generalization of random walks on the $d$-dimensional 
grid. The text gives no references to such works (e.g., no references to P\'olya and subsequent works). It is also curious that the paper does not use the term convolution at all and only rarely appeals explicitely to the multiplication law of the group!  E.B. Dynkin and M.B. Malyutov (1961) and G. Margulis (1966) wrote important related papers in the following years. Neither cites Kesten's thesis but Kesten reviewed both papers for Mathematical Reviews.  The famous little book {\em Probabilities on Algebraic Structures} published by Ulf Grenender  in 1963 gives only marginal attention to Kesten's work (Section 5.5.3 and related Note). 
 
It seems fair to say that Kesten's thesis did not immediately find an audience, especially in the probability community. During the 1960s, it attracted the attention of people interested in ergodic theory and non-commutative harmonic analysis and functional analysis (M.M. Day, H. Furstenberg), and of Marcel-Paul Sch\"utzenberger  who was interested in formal languages. Kesten himself became interested in random walks on abelian groups, a subject on which he collaborated with his Cornell colleague and close friend F. Spitzer. Kesten's contribution to the fifth Berkeley Symposium on Mathematics Statistics and Probability 
(1965), {\em The Martin Boundary of Recurrent Random Walks on Countable Groups},  is the rare instance when Kesten  revisited the subject he created in his thesis. The famous question known as 
{\em Kesten's Problem}---Which are the finitely generated groups that carry a non-degenerated
recurrent random walk?---Are those groups only the finite extensions of $\{0\},\mathbb Z$ and $\mathbb Z^2$?---emerged from this article.   

During the next decade (1970s), a group centered in France (A. Avez, E. Derriennic, Y. Guivarc'h, M. Keane and B. Roynette, encouraged by A. Brunel, and later, P. Baldi, Ph. Bougerol, and others) explored a variety of important questions around random walks on groups.  The volumes \cite{GKR,Nancy} give a representative picture of these efforts.  
In particular, Kesten's problem was resolved affirmatively in the context of connected Lie groups. The extend of the differences between this particular context and the context of finitely generated groups was perhaps not entirely apparent at the time. The contributions of J. Rosenblatt during the seventies should also be mentioned here.

It is during the 1980s that the subject of Random Walks on Groups took off thanks to remarkable progress and contributions. Using an amenability criterion based on co-growth developed by R. Grigorchuck (a criterion that parallels Kesten's amenability criterion), Adyan proved in 1982 that many Burnside groups are not only infinite but non-amenable. In 1983, V. Kaimanovich and A. Vershik  published an elegant and influential article in the Annals of Probability which, as the following quote makes clear, expands on Kesten original vision of the subject:\\

{\em Probabilistic properties of random walks on groups are deeply intertwined
with many essential algebraic characteristics of groups and their group algebras
(amenability, exponential growth, etc.). On the other hand, random walks on
groups regarded as a special class of Markov processes provide new simply describable 
examples of nontrivial probabilistic behavior. Both these aspects make
the subject especially interesting and important. }\\ 

M. Gromov had proved in 1981 that any finitely generated group with polynomial volume growth contains  a nilpotent group of finite index. A few years later, R. Grigorchuck proved that groups  of intermediate volume growth, that is, volume growth that is faster than any polynomial but slower than any exponential, exit and are, in fact, plentiful.  When, around 1985, N. Varopoulos established a sharp linked between volume growth of the type $V(r)\ge cr^d$ and the decay of the return probability of the type $\mathbf P_{\ids}(X_n=\id)\le Cn^{-d/2}$ (where $\id=\id_G$ denotes the identity element in $G$)
, he provided the solution to Kesten's problem: because of the recurrence criterion
$\sum \mathbf P_{\ids}(X_n=\id)=\infty$ and Gromov's theorem, the only finitely generated groups that carry a non-degenerate recurrent random walk are the finite extensions of $\{0\},\mathbb Z$ and $\mathbb Z^2$.  

Before these developments, the subject of Random Walk had been strongly influenced by areas of mathematics such as ergodic theory, harmonic analysis, representation theory, and the theory of Markov processes. This had left only a marginal role to what should have always been one of the main actors, group theory. Indeed, Lie groups and matrix groups--objects that are completely absent in Kesten's original work--had taken a preeminent role.  Little attention was given to finitely generated groups beyond the key example of free groups and a few other special cases. This changed drastically during the 1980s thanks in part  to the attention given to {\em geometric group theory} through the influential work of M. Gromov. For random walk theory, this had the momentous effect to bring back group theory --be it geometric group theory or combinatorial group theory--to the center of the stage.   

Here are some of the key interrelated questions that have emerged from this body of work:
\begin{itemize}
\item  What is the structure of sets of harmonic functions (bounded, positive, of polynomial growth, of a given growth type, slow of fast)?  Here, harmonic functions are solutions $u$  of the equation $u*\mu=u$ where $\mu$ is a given probability measure on $G$.
\item What are the spectral properties of the convolution operator  $f\mapsto f*\mu$ when $\mu$ is a (symmetric) probability measure? 
\item What is the behavior of the probability of return of a symmetric random walk driven by a measure $\mu$, $\mathbf P_{\ids}(X_n=\id)=\mu^{(n)}(\id)$, and, more generally, the behavior of $\max\{\mu^{(n)}(g): g\in G\}$ for non-symmetric measures?
\item What is the {\em escape behavior} of transient random walks captured, say,  in terms of some given distance function and in the form of  average displacement or almost sure results?  
\item What is the asymptotic entropic behavior, that is, the behavior of $n\mapsto \mathbf E(\log \mu^{(n)}(X_n))$ as $n$ tends to infinity?
\end{itemize}

In general, these questions can be phrased by asking: What is the influence of the structure of the group $G$ on the random walk behavior?  How does the answer depend on basic  properties of $\mu$  such as symmetry or moment assumptions?  Can some {\em random walk behaviors} (for classes of random walks on a group $G$) be deemed  {\em group invariant}? What properties of $G$ can be understood by observing random walk behaviors?  Can random walk behavior be used to understand groups better?  In each of these directions of research, many interesting natural questions remain open. 

We end this short historical perspective with some pointers to recent progress in the directions outlined above. 
Further references are found in the listed articles. 
Some recent results on harmonic functions and group structure are in \cite{AAMV,EPFWF,BEPF,KotV,AmirKozma,FHTV,amir2017exponential,ErZ}. 
Entropy and/or displacement are discussed in \cite{Erschlerdrift,AmBal,Amjoint,Naor2008,Lee2013,brieussel2015speed}. Probability of return, spectral and other properties are discussed in \cite{Bartholdi2005,BrieusselMZ,Amir2009,Bartholdi2010,Saloff-Coste2013b,SCZ-AOP2016,SCZ-Rev,ErschAlmost}.

The present work is devoted to the study of the behavior of random walks on groups that arise from a certain type of rather simple and basic combinatorial/algebraic construction. These groups are, in a natural particular way, subgroups of permutation groups on infinite countable vertex sets. Our key example is the  {\em pocket group} $G_{\circledast} $ built on the finitely generated group $G$. It
is the subgroup of $\mathbb S(G\cup \{*\})$ generated by all translations by elements in $G$ (by definition, these permutations fix $*$) and by the transposition   $\tau=(*,\id_G)$.  
\section{Spectral and isoperimetric profiles of pocket extensions}
 To any finitely generated group, one can associate
the monotone non-increasing functions 
$$\Lambda_{1,G},\Lambda_{2,G} \mbox{ and } \Phi_G$$ which, respectively, 
describe the $L^1$- and $L^2$-isoperimetric profiles 
and the return probability (or heat kernel decay) associated with 
the group $G$ (precise definitions are recalled below in Section \ref{sec-iso}). 
From a coarse analysis point of view which we briefly recalled below, 
these are group invariants in the 
sense that they do not depend on the particular choice of  the symmetric finite 
generating set that is used to define them. Celebrated Theorems due to F\o lner and Kesten assert that the 
dichotomy between amenable and non-amenable groups can be captured precisely 
using any one of these three invariants: A group is non-amenable if and only if 
$\Lambda_{1,G}$ (equivalently, $\Lambda_{2,G}$) is bounded below away from $0$,
and this is also equivalent to having $\Phi_G$ decay exponentially fast. 

This paper focuses on these 
invariants and how they depend on the structure of the underlying 
group in the context of several constructions which yield amenable groups based on the gluing of some basic actions.  
See Section \ref{sec-iso} for details.
To put this work in perspective, recall that among polycyclic groups or 
(almost equivalently) finitely generated 
discrete amenable subgroups of linear groups, the behaviors of 
$\Lambda_{1,G}$, $\Lambda_{2,G}$ and  $\Phi_G$ are well understood and fall 
in exactly 2 possible categories (the meaning of the notation $\simeq$ used below is spelled out at the beginning of Section \ref{sec-iso}): 
\begin{itemize}
\item The polycyclic group $G$ has exponential volume growth and
$$\Lambda_{1,G}(v)^2 \simeq \Lambda_{2,G}(v)\simeq  \frac{1}{[\log (1+v)]^2}
\;\mbox{ and } \Phi_G(n)\simeq \exp(- n^{1/3}).$$ 
\item The volume growth $V_G$ satisfies $V_G(r)\simeq (1+r)^d$ for 
some integer $d$ and
$$\Lambda_{1,G}(v)^2 \simeq \Lambda_{2,G}(v)\simeq (1+v)^{-2/d} 
\;\mbox{ and } \Phi_G(n)\simeq (1+n)^{-d/2}.$$ 
\end{itemize} 
These can be considered as the ``classical'' behaviors. 
See \cite{Tessera} for the description of a larger class of groups 
for which only these behaviors can occur. 

By now it is well-understood that, for more general groups, 
other behaviors can occur. See, e.g.,  
\cite{Erschler2006,Pittet2002,SCnotices,Varsol}.  One of the first and most popular example of construction that demonstrates the existence of other possible behaviors is the lamplighter group  $(\mathbb Z/2\mathbb Z)\wr G$ with base $G$.  Here $G$ is a finitely generated group and 
$(\mathbb Z/2\mathbb Z)\wr G $ is the semi-direct product
$$(\mathbb Z/2\mathbb Z)^{(G)} \rtimes_\alpha G$$
where $(\mathbb Z/2\mathbb Z)^{(G)}=\oplus_{g\in G} (\mathbb Z/2\mathbb Z)_g$ is the direct sum of countably many copies of 
$\{0,1\}=\mathbb Z/2\mathbb Z$ (i.e., $(\mathbb Z/2\mathbb Z)^{(G)}$ is the group of all binary sequences indexed by $G$ with finitely many non-zero entries). The action of $G$ on these binary sequences is by index translation (i.e., for $h\in G$,
$\alpha(h)((\eta_g)_{g\in G})=(\eta'_g)_{g\in G}$ with $\eta'_g=\eta_{h^{-1}g}$, $g\in G$).  This is a special case of a more general construction known as wreath product. To simplify notation, let $G^\wr=(\mathbb Z/2\mathbb Z)\wr G $ be the lamplighter group with base $G$. 

Works by A. Erschler \cite{Erschler2006}, by C. Pittet and the first author \cite{Pittet2002}, and by the present authors \cite{SCZ-AOP2016},  
describe how to compute the invariants $\Lambda_{1,G^\wr}$ (\cite{Erschler2006}), $\Lambda_{2,G^{\wr}}$, (\cite{SCZ-AOP2016}), and $\Phi_{G^\wr}$
(\cite{Pittet2002,SCZ-AOP2016}) as functions of the corresponding invariant for $G$.  In particular, 
$$\Lambda_{p,G^\wr} (v) \simeq \Lambda_{p,G} (\log (1+ v)),\;\;v>0,\;\;p=1,2 .$$

The goal of this paper is to provide similar results for a variety of related but different constructions.  Any countable group $G$ can be viewed as a subgroup of the group $\mathbb S(G)$ of all permutations of the set $G$. Namely, an element $h\in G$ is viewed as the permutation $g\mapsto hg$. The group 
$\mathbb S(G)$ is very large (not finitely generated and, indeed, uncountable) and it contains many finitely generated groups that contain $G$.
We are interested in certain of these finitely generated subgroups of $\mathbb S(G)$ which have  $G$ both as a subgroup and as a quotient, and which arise from some particular 
constructions that provide explicit generators.  One variant of this type of constructions---which we call the {\em pocket extension}---is as follows.  Let $G$ be a finitely generated group with identity element $\id_G$. Add a new element, $*$, to the countable set $G$ to form the set   $G\cup \{*\}$ (this is not a group). The pocket group $G_{\circledast} $
is the subgroup of $\mathbb S(G\cup \{*\})$ generated by all translations by elements in $G$ (by definition, these permutations fix $*$) and by the transposition   $\tau=(*,\id_G)$.  To understand what this means, view any element in $\mathbb S(G\cup\{*\})$ as a marking of $G\cup \{*\}$ by itself.  A translation by an element $h$ of $G$ leaves the marker at $*$ unchanged and move the marker  at  $g\in G$ to $hg\in G$. The transposition $\tau$ simply transposes the markers at $*$ and $e$.  This is consistent with the view that  permutations of a deck of cards are described by their action on the positions of the cards. 
We prove that
$$\Lambda_{p,G_{\circledast}} (v)\simeq \Lambda_{p,G}\left(\frac{\log (1+v)}{\log(1+\log(1+v))}\right), \;\; v>0,\;\;p=1,2$$
This is the same behavior as the known behavior for the wreath product $G\wr G$. 
Other constructions of this type are described in Section \ref{sec-gluing}.

\section{Preliminaries} \setcounter{equation}{0} \label{sec-Prel}

\subsection{Groups defined by labelled graphs and gluings} \label{sec-gluing}

In what follow,  $\mathbb S(X)$ denotes the full symmetric group of the set $X$ whereas $\mathbb S_0(X), \mathbb A_0(X)$ denote, respectively, the group of all permutations of $X$ with finite support and the alternating subgroup of permutations of $X$ with finite support which are of even type (signature $1$). 

Let $(X,E)$ be a  graph where the edge-set $E$ is equipped with  a map $ \phi: E\rightarrow X\times X$, $\phi(e)=(x(e),(y(e))$ (this map describes the edge $e$ as a pair of vertices, allowing for multiple edges and self loops). We assume that there is an involution $e\mapsto \check{e}$ with no fix points and such that
$\phi(\check{e})=(y(e),x(e))$.  Assume that $X$ is finite or countable and that $(X,E)$ is regular of degree $2k$ in the sense that for each $x\in X$ there are $2k$ edges in $E$ such that $x(e)=x$.
Let $\mathbf A=\{\boldsymbol \alpha_i: 1\le i\le k\}$ be an alphabet with abstract inverse $\mathbf A^{-1}=\{\boldsymbol{\alpha}_i^{-1}: 1\le i\le k\}$. 
A labelling of $(X,E)$ is a map $m: E\rightarrow \mathbf A\cup \mathbf A^{-1}$ such that $m(\check{e})= m(e)^{-1}$.  Call $(X,E, m)$ a labelled graph. 
\begin{figure}[h]
\begin{center}\caption{First step (Houghton group): $t_1$ in blue, $t_2$ in red. The color of a dot indicates the presence of a loop labelled with the associated generator.}\label{D1} 
\begin{picture}(300,220)(0,-20) 

{\color{blue}{
\multiput(149.5,77)(-15,0){10}{\vector(1,0){6}}}}
{\color{red}{
\multiput(146,73)(-15,0){10}{\vector(1,0){6}}}}

\multiput(154,75)(-15,0){10}{\circle*{3}}

{\color{blue}
\multiput(151,71)(10,10){11}{\vector(1,1){9.9}}
\multiput(163,66)(10,-10){10}{\circle*{3}}}

{\color{red}
\multiput(147,79)(10,-10){11}{\vector(1,-1){9.9}}
\multiput(161,84)(10,10){10}{\circle*{3}}}

\end{picture}\end{center}\end{figure}
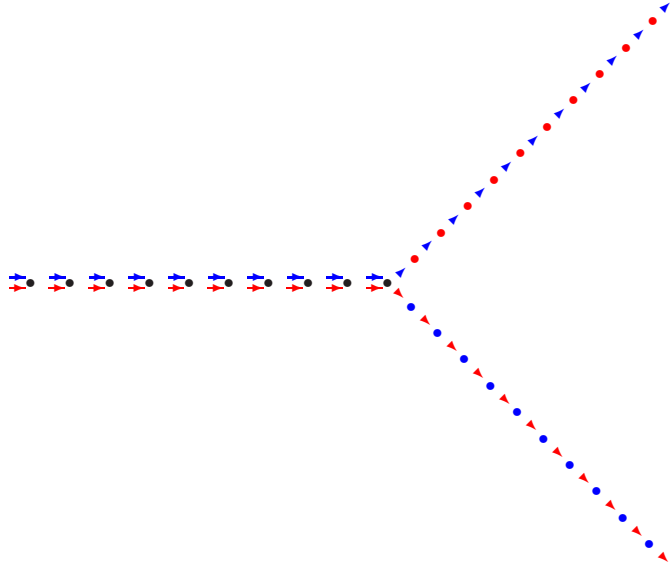

Any such labelled graph defines a finitely generated group $\Gamma=\Gamma(X,E,m) $, subgroup of the full symmetric group of $X$, $\mathbb S(X)$, and  generated by $k$  elements $\alpha_i$, $1\le i\le k$, and their inverses.  By convention, think of an element of $\sigma \in \mathbb S(X)$ as  a rule to move around distinct markers seating above each vertex in $X$. An element $\sigma$ tells us, for each $x$, where to move the marker currently at $x$.  {\em When describing an element $\sigma$, we say that $\sigma$ moves $x$ to $y$ to signify that it moves the marker at $x$ to $y$} (of course, $\sigma$ also moves the label at $y$ to somewhere else).  This is consistent with the fact that we can always describe  a given $\sigma$ by its action on the trivial self-labelling of $X$ by markers in $X$.
 
The action of each  $\alpha_i^{\pm1} $ on the elements of $X$ is given by the labelling  in the sense that $\alpha \cdot x =y $ if and only if  there is an edge $e$ labelled with $\alpha$ and such that $\phi(e)=(x,y)$. In practice, it is often convenient to indicate only the edges labelled by $\alpha_i$, $1\le i\le k$.
For each of these  there is an ``inverse edge'' labelled by the corresponding $\alpha_i^{-1}$ which is omitted. All self loops are also omitted because they can be recovered from the rest of the labelling.  To figure out the action of the product $\alpha_2\alpha_1$ on a vertex $x$, follow the edge at $x$ labelled $\alpha_1$ and from there, follow the edge labelled $\alpha_2$. Proceed similarly for longer products. 

We are interested in a very basic gluing procedure which we now describe. Consider two labelled graphs $(X_i,E_i,m_i)$ as above (with distinct alphabets of possibly different sizes $k_1,k_2$), and subsets $V_1\subset X_1,V_2\subset X_2$ equipped with a bijective map $j:V_1\rightarrow V_2$.
Let $(X,E,m)$ be the the labelled graph of degree $2(k_1+k_2)$ obtained by gluing $(X_1,E_1,m_1)$ and $(X_2,E_2,m_2)$ via the identification of the vertices in $V_1$ with the vertices in $V_2$ (using the bijective map $j$) and adding
appropriate labelled self-loops at all vertices outside $V_1\equiv V_2$.  Obviously, one can glue together more than two labelled graphs along different sets and this can be achieve by repeating the above procedure sequentially.

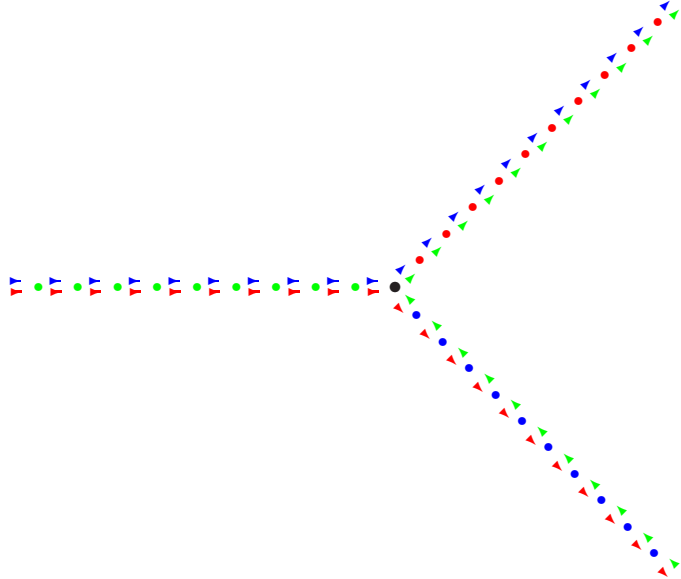
\begin{figure}[h]
\begin{center}\caption{Second step (Houghton group): $t_1$ in blue, $t_2$ in red, $t_3$ in green. Each vertex except the central vertex carry a loop labelled with the generator associated with the given color. }\label{D2} 
\begin{picture}(300,210)(0,-20)  

{\color{blue}{
\multiput(151,77)(-15,0){10}{\vector(1,0){3}}}}
{\color{red}{
\multiput(148,73)(-15,0){10}{\vector(1,0){3}}}}

{\color{green} \multiput(154,75)(-15,0){10}{\circle*{3}}}

\put(154,75){\circle*{4}}

{\color{blue}
\multiput(148,73.5)(10,10){11}{\vector(1,1){9.9}}
\multiput(162,64)(10,-10){10}{\circle*{3}}}

{\color{red}
\multiput(144,75)(10,-10){11}{\vector(1,-1){9.9}}
\multiput(160,85)(10,10){10}{\circle*{3}}}

{\color{green}
\multiput(145,70)(10,10){11}{\vector(1,1){9.9}}
\multiput(161,62)(10,-10){11}{\vector(-1,1){9.9}}}

\end{picture}\end{center}\end{figure}

\begin{example}[Houghton group] See Figures \ref{D1} and \ref{D2}. In this example (it first appeared in \cite{Houghton}, hence the name), we glue together three copies of the Cayley graph of $\mathbb Z$ with canonical generators $t_1,t_2,t_3$. Using the previous description, we start with two copies $\langle t_1\rangle, \langle t_2\rangle$, and identify these two copies 
of $\mathbb Z=\{\dots,-2,-1,0,1,2,\dots\}$  along their respective subsets $\{\dots,-2,-1,0\}$
to obtain an infinite tripod with one branch carrying double edges labelled $t_1,t_2$, one branch carrying simple edges labelled $t_1$ and self-loops labelled $t_2$ and the last branch carrying simple edges labelled $t_2$ and self-loops labelled $t_1$. See Figure \ref{D1} (recall that we only describe one half of the labelling, that is, we omit the description of the ``inverse edges'' labelled $-t_i$). 

Next, we glue a third copy of $\mathbb Z=\langle t_3\rangle$ by identifying  $0t_3$ with the already identified $0t_1=0t_2$,  the points $nt_3$, $n>0$, with $nt_2$,  and the points $nt_3$, $n<0$, with the points $-nt_1$.   See Figure \ref{D2}.

Call $Y$ the vertex set so obtained. The group $\mathcal H_3$ is defined by the labelled graph with vertex set $Y$ and $3$ generators and their marks as described above.  Note again how in figure  \ref{D2} we have omitted all the ``inverse edges'' and that it is a  trivial matter to recover them. In this case, we have actually indicated the existence of self-loops at each vertex by using the color code associated with the generators. But observe that we could have drawn all vertices black instead because the loops can be recovered from the rest of the labelling. 

The group $\mathcal H_3$ can alternatively be described as the group of those permutations of $Y$ which reduce to an eventual translation along each of the ends of $Y$. Indeed, 
call $R_i$ the half-ray on which $t_i$ acts trivially, $i=1,2,3$, and  
orient each of these three half-rays in the direction moving away from $o$.  By recording the far away effect of any element $t$ of $\mathcal H_3$ along each of the rays, we obtain a group homomorphism $ \phi: \mathcal H_3 \ra \mathbb Z^3$ satisfying $$\phi(t_1)= (0,1,-1), \;\phi(t_2)=(1,0,-1), \;\phi(t_3)=(-1,1,0).$$  The image of this map is the subgroup $\Sigma$ of $\mathbb Z^3$ of those elements $n=(n_1,n_2,n_3)$ satisfying $\sum_1^3 n_i=0$ and we have a short exact sequence  $$1\ra \mathbb S_0(Y) \ra \mathcal H_3  \stackrel{\phi}{\ra} \Sigma \ra 1$$
and  $\mathbb S_0(Y)=[\mathcal H_3,\mathcal H_3]$. See, e.g., \cite{SRLee} and the references given therein. 

The careful reader will have noticed that the second step of the construction above was unnecessary. 
The group $\Gamma$ obtain after the first step and generated by $t_1,t_2$ is already the group $\mathcal H_3$.
\end{example}

\begin{example}[Variation on the Houghton group]   Instead of gluing the first two copies of $\mathbb Z$ along the negative integers, let us glue then  along their respective subset $\{-1,0\}$. In a second step, let us glue the third copy of $\mathbb Z =\langle t_3\rangle$ by identifying $0t_3$ with the already identified $ 0t_1\equiv 0t_2$, $-1t_3$ with $1t_1$, and  $1t_3$ with $1t_2 $.  This gives us a graph made of three copies of $\mathbb Z$ glued together along a length $1$ tripod centered at the central point $0$. See  Figure \ref{D3}.  If we call $Y$ the associated vertex set (it has six linear ends) and let $\Gamma$  
be the associated group, we have the short exact sequence
$$1\ra \mathbb S_0(Y) \ra \Gamma  \stackrel{\phi}{\ra} \mathbb Z^3\ra 1$$
and  $[\Gamma,\Gamma]=\mathbb S_0(Y)$. Here $\phi$ associates to any element $\gamma$ of $\Gamma$ the three eventual $\mathbb Z$ translations
observed at infinity along the three pairs of ends of $Y$ associated respectively to $t_1,t_2,$ and $t_3$.

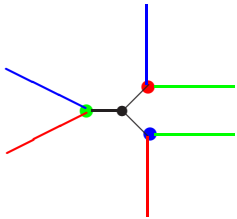
\begin{figure}[h]
\begin{center}\caption{Sketch of the gluing of three copies of $\mathbb Z$ along a length $1$ tripod centered at $0$. }\label{D3} 
\begin{picture}(300,100)(0,25)  

\put(156.5,75){\circle*{4}}
{\color{green} \put(143,75){\circle*{5}} }  
{\color{red} \put(163,84){\circle*{5}}  }
{\color{blue} \put(160.5,66){\circle*{5}}  }
\put(150,75){\line(-1,0){12}}
\put(150,75){\line(1,1){10}}
\put(150,75){\line(1,-1){10}}
{\thicklines
{\color{blue}\put(136,76){\line(-2,1){30}}
\put(159,85){\line(0,1){30}}
}
{\color{red}\put(133,74){\line(-2,-1){30}}
\put(156,65){\line(0,-1){30}}}
{\color{green} \put(155.5,66){\line(1,0){30}}
\put(155.5,84){\line(1,0){30}}}}

\end{picture}\end{center}\end{figure}
\end{example}

\begin{example}[Rooted gluing] \label{exa-R}
Suppose we have two or more labelled graphs $(X_i,E_i,m_i)$, $i=1,2,\dots,\ell$ 
(each with their distinct labellings) and a preferred vertex 
$o_i$. Let $(X,E,m)$ be the labelled graph  with vertex set $X= (\cup_1^\ell X_i\setminus\{o_i\}  )  \cup \{o\}$ corresponding to identifying the points $o_1,o_2,\dots,o_\ell$. One can check that the associated group $\Gamma$ contains a copy of each $\Gamma_i$
where $\Gamma_i$ is the group defined by $(X_i,E_i,m_i)$ and also a copy of  $\mathbb A_0(X)$ which can be  identified as
$$\mathbb A_0(X)
= \langle[ \Gamma_i,\Gamma_j]^\Gamma; i\neq j\rangle.$$
Sometimes, $\Gamma$ will in fact contains the full symmetric group with finite support $\mathbb S_0(X)$, for instance, when one of the $\Gamma_i$ contains an odd  permutation with finite support.

In the case where the labelled graphs $(X_i,E_i,m_i)$ are labelled Cayley graphs of infinite groups $\Gamma_i$, each rooted at the identity element $\id_{\Gamma_i}$, then there is an obvious projection
$\Gamma \ra \Gamma_1\times \cdots \times \Gamma_\ell$ which captures the action of an element $\gamma\in \Gamma$ on each $\Gamma_i$ at infinity and whose kernel is $\mathbb A_0(X)$.  We note that the group $\Gamma$ resulting from  this construction does not depend on the choice of the generators of the groups $\Gamma_i$, $1\le i\le \ell$. 
 \end{example}

\begin{example}[Pocket extension] \label{exa-P}
One of the simplest classes of examples of this type is obtained by joining a rooted labelled graph $(X,E,m)$ with the Cayley graph of the two-element group $\{\id,*\}$ with generator $*$. Let $(X^*,E^*,m^*)$ be the resulting labelled graph. In general, this basic example is already too  complex to be analyzed completely and we will only provide some partial results.

We will however  give sharp general results
in the case when $(X,E,m)$ is the labelled {\em Cayley graph} of an infinite finitely generated group $G$ equiped with a finite generating set $S$. 
In this case, $X^*=G\cup \{*\}$ and the associated group is  $$G_{\circledast}=G\ltimes \mathbb S_0(G\cup\{*\}) .$$
In the case when $G$ is finite, $G_{\circledast} =\mathbb S (X^*)$ and this construction leads to interesting generating sets of the symmetric group. This 
finite case is discussed in \cite{pocketfinite}..
\end{example}

\begin{example}[Star extension of Cayley graphs]\label{star} Compare the following  construction to the pocket extension construction discussed above. Let $(X, E,m)$ be the labelled Cayley graph associated to a group $G$ with finite generating set  $S=\{s_1^{\pm 1},\dots,s_k^{\pm 1}\}$ and labelling alphabet $\mathbf s_1,\dots,\mathbf s_k$.  The simplest way to define the star extension $(G,S)_\star$ of $G$ is as a subgroup of $\mathbb S(G)$ (the group 
of all permutations of the elements of $G$).
Namely, $(G,S)_\star$ is the subgroup of $\mathbb S(G)$ generated by all the left-translation by elements of $G$
and by the $k$ transpositions `` transpose the identity $\id$ and $s_k$'' (which really means, transpose the marker at $\id$ with the marker at $s_k$). To obtain this group as the group generated by a label graph, 
let
$(X_{\star},E_{\star},m_\star)$ be the labelled graph obtained by adding only $4k$ new non-trivial edges to the Cayley graph $(X, E,m)$.  Recall that each edge $e$ is paired with
its ``opposite'' $\check{e}$. In what follows , we omit the description of the ``opposite'' edges so that we only describe $2k$ new edges denoted 
$e^\star_i,f^\star_i$, $1\le i\le k$,
 with
$$x(e^{\star}_i)=\id_G, \;y(e^{\star}_i)=s_i,\; x(f^\star_i)=s_i,\; y(f^\star_i)=\id_G,\; m(e^\star_i)=m(f^\star_i)= \mathbf t_i.$$  At any $x\in X\setminus \{\id_G,s_i\}$, the labelling $\mathbf t_i$  is carried by a self-loop at $x$. 

 As promised, the group $\Gamma=(G,S)_\star$ associated with this labelled graph is the subgroup of $\mathbb S(G)$  generated by the ``translations'' $s_1,\dots,s_k$ and the transpositions $t_i=(\id_G,s_i)$, $1\le i\le k$. 
When $G$ is infinite, $(G,S)_\star$ contains a copy of $G$ 
(translation at infinity) and we have $\Gamma=(G,S)_\star= G\ltimes \mathbb S_0(G) .$  When $G$ is finite, $(G,S)_\star=\mathbb S(G)$.
See Figure \ref{starextZ} for an illustration with $G=\mathbb Z$.

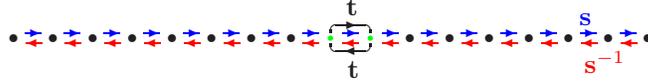
\begin{figure}[h]
\begin{center}\caption{The labelled graph defining the star-extention of $\mathbb Z$; $s^{\pm 1}=\pm 1$, $t=(0,1)$. The green dots mark the elements $0$ and $1$ in $\mathbb Z$. The black dots carry extra loops
associated with the trivial action of $\mathbf t$ and $\mathbf t^{-1}$. The edges between $0$ and $1$ associated with $\mathbf t^{-1}$ are not included in the picture.  \label{starextZ} }
\begin{picture}(200,25)(-50,0)  

{\color{blue}{
\multiput(149,7)(-15,0){16}{\vector(1,0){6}}}}
{\color{red}{
\multiput(152,3)(-15,0){16}{\vector(-1,0){6}}}}
{\color{green} {\multiput(33,5)(15,0){2}{\circle*{2}}}}
\put(40.5,7){\oval(15,6)[t]} \put(41,10){\vector(1,0){2}}
\put(40.5,3){\oval(15,6)[b]}  \put(40,0){\vector(-1,0){2}}
{\multiput(18,5)(-15,0){8}{\circle*{3}}}
{\multiput(63,5)(15,0){7}{\circle*{3}}}
{\color{blue}{\put(126,12){\makebox(0,0){$\mathbf s$}}}}
{\color{red}{\put(130,-4){\makebox(0,0){$\mathbf s^{-1}$}}}}
\put(34.5,17){\makebox(0,0){$\mathbf t$}}
\put(35,-7){\makebox(0,0){$\mathbf t$}}

\end{picture}\end{center}\end{figure}

 \end{example}

\subsection{Isoperimetric profiles}
\label{sec-iso}

Given two functions $f_1,f_2$ taking non-negative real values but defined on an arbitrary 
domain (not necessarily a subset of $\mathbb R$), we write  $f\asymp g$
to signify that there are constants $c_1,c_2\in (0,\infty)$ such that  
$c_1f_1\le f_2\le c_2f_1$.
Given two monotone non-negative real functions $f_1,f_2$, write $f_1\simeq f_2$ 
if there exists $c_i\in (0,\infty)$ such that 
$$c_1f_1(c_2t)\le f_2(t)\le c_3f_1(c_4t)$$ on the domain of definition of 
$f_1,f_2$. Usually, $f_1,f_2$ will be defined on a neighborhood of $0$ or 
infinity and tend to $0$ or infinity at either $0$ or infinity. In some cases, 
one or both functions are defined only on a countable set such as $\mathbb N$. When this is the case, we have to interpret $c_2t,c_4t$ as nearest integers values. 
We denote the associated order by $\lesssim$.
Note that the equivalence relation $\simeq$ distinguishes between power 
functions of different degrees 
and between stretched exponentials $\exp(-t^\alpha)$ 
of different exponent $\alpha>0$ but does not distinguish between 
different rates of exponential growth or decay.  

Given a probability measure $\phi$ on a group $G$, 
let $(S^l_n)_0^\infty$ (resp, $(S^r_n)_0^\infty$)
denotes the trajectory of the left (resp. right) random walk driven by $\phi$ 
(often started at the identity element $\id$).  More precisely, if 
$(X_n)_1^\infty$ are independent identically distributed $G$-valued 
random variables with law $\phi$, then
$$S_n^l= X_n\dots X_1X_0  \;\;(\mbox{ resp. }  S_n^r=X_0X_1\dots X_n).$$
Let $\mathbf P^x_{*,\phi},\; *=l \mbox{ or } r$ be the associated measure 
on $G^\mathbb N$ with $X_0=x$  and 
$\mathbf E^x _{*,\phi}$ the corresponding expectation 
$\mathbf E^x_{*, \phi}(F)=\int_{G^{\mathbf N}}
F(\omega)d\mathbf P^x_{*,\phi}(\omega)$. In particular,
$$\mathbf P^{\idt} _{*,\phi}(S_n=x)=
\mathbf E^{\idt} _{*,\phi}(\mathbf 1_{x}(S_n))=\phi^{(n)}(x).$$
In this work, we find it convenient to work (mostly, but not always) 
with the left version of the 
random walk and we will drop the subscript $l$ in the notation introduced above 
unless we need to emphasize the differences between left and right.
Observe that the random walk on the left is a right-invariant process
since $(X_n\dots X_0)y=X_n\dots (X_0y)$. 
When the measure $\phi$ is symmetric in the sense that 
$\phi(x)=\phi(x^{-1})$ for all $x\in G$, its Dirichlet form is defined by
$$\mathcal E_{\phi}(f,f)=\mathcal E_{G,\phi}(f,f)
=\frac{1}{2}\sum_{x,y\in G}|f(yx)-f(x)|^2\phi(y).$$
This is the Dirichlet form associated with random walk on the left, 
$\mathcal E_\phi^l=\mathcal E_\phi$, and $\mathcal E^r_\phi$ is defined similarly.

The (random walk) group invariant
$\Phi_G$ is a positive decreasing function defined on $[0,\infty)$ 
up to the equivalence relation $\simeq$ which 
describes the probability of return of any random walk on the 
group $G$ driven by a measure $\phi$ that is symmetric, 
has generating support, and a finite second moment with respect 
to a fixed word metric on $G$ (i.e., $\sum_g|g|^2\phi(g)<\infty$). See  \cite{PSCstab}.
Namely, for any finitely generated group $G$ and any measure $\phi$ 
as just described,
$$\forall\, n=1,2,\dots,\;\;\phi^{(2n)}(\id)=\mathbf P_\phi^{\idt}(S_{2n}=\id)
\simeq \Phi_G(n).$$ 

Given a symmetric probability 
measure $\phi$, set
$$\Lambda_{2,G,\phi}(v)=\Lambda_{2,\phi}(v)=\inf\{ 
\lambda_\phi(\Omega): \Omega\subset G,\; |\Omega|\le v\}$$
where
\begin{equation}\label{def-eig}
\lambda_\phi(\Omega)=
\inf\{ \mathcal E_{\phi}(f,f): \mbox{support}(f)\subset \Omega, \|f\|_2=1\}.
\end{equation}
The function $v\mapsto \Lambda_{2,\phi}(v)$ is called the 
$L^2$-isoperimetric profile or spectral profile of $\phi$.

The associated $L^1$-isoperimetric profile is defined by 
\begin{eqnarray*}
\Lambda_{1,G,\phi}(v)&=&\Lambda_{1,\phi}(v)\\
&=&\inf\left\{\frac{1}{2}\sum_{x,y}|f(yx)-f(x)|\phi(y): 
|\mbox{support}(f)|\le v,\;\;\|f\|_1=1\right\}.\end{eqnarray*}
Using an appropriate discrete co-area formula, 
$\Lambda_{1,\phi}$ can equivalently be defined by
$$\Lambda_{1,\phi}(v)= \inf
\left\{|\Omega|^{-1}\sum_{x,y}\mathbf 1_\Omega(x)
\mathbf 1_{G\setminus \Omega}(xy)\phi(y): 
|\Omega|\le v\right\}.$$
If we define the boundary of $\Omega$ to be the set   
$$\partial\Omega=\left\{(x,y)\in G\times G: x\in \Omega, y\in G\setminus \Omega
\right\}$$ and set
$$\phi(\partial \Omega)= \sum_{x\in \Omega,xy\in G\setminus \Omega}\phi(y)$$
then $$\Lambda_{1,\phi}(v)= \inf\{\phi(\partial\Omega)/|\Omega|: |\Omega|\le v\}.$$
It is well-known that
\begin{equation} \label{Cheeger}
\frac{1}{2}\Lambda^2_{1,\phi}\le \Lambda_{2,\phi}\le \Lambda_{1,\phi}.
\end{equation}
Given a non-increasing function $\Lambda$, we define its right-continuous inverse 
$\Lambda^{-1}$ by $$\Lambda^{-1}(s)= \inf\{ v>0:  \Lambda (v)\le s\}.$$
The F\o lner function $\mbox{F\o l}_{G,\phi}
$ is related to the $L^1$-isoperimetric profile defined above by
$$\mbox{F\o l}_{G,\phi}(t)=\inf\{ v: \Lambda_{1,\phi}(v)\le  1/t\}$$
so that $\mbox{F\o l}_{G,\phi}(t)
=\Lambda_{1,\phi}^{-1}(1/t)$ (i.e., $\mbox{F\o l}_{G,\phi}(t)$ is
the right-continuous inverse of the non-decreasing function $\Lambda_{1,\phi}$ at $1/t$ and $t\mapsto \mbox{F\o l}_{G,\phi}(t)$ is left-continuous).
In the literature, the definition $\mbox{F\o l}_{G,\phi}(t)=\inf\{ v: \Lambda_{1,\phi}(v)< 1/t\}$ is sometimes used instead. 

We note that, for $p=1,2$,  the functions $v\mapsto \Lambda_{p,\phi}$ are non-increasing right continuous step-functions changing values only at integer values of their argument $v\in [1,\infty)$.  By definition, $\Lambda_{p,\phi}=\infty$ on $[0,1)$ and 
$$\Lambda_{p,\phi}(v)= s_\phi=\sum_{g\neq \id} \phi(g) \mbox{ for  } v\in [1,2).$$ The right continuous inverse $\Lambda_{p,\phi}^{-1}$ only takes integer values
or the value $+\infty$. It is constant equal to $1$ on $[s_\phi,\infty)$.  More generally, in the definition of $\Lambda^{-1}_{p,\phi}(v)$, the infimum is attained.  Obviously, if $v< \Lambda_{p,\phi}^{-1}(s)$ then $\Lambda_{p,\phi}(v)>s$.

Recall that a finitely generated group $G$ is amenable if and only if (this could be taken as the definition) $\Lambda_{1,G}(v)\simeq 1$ for all $v$.  Equivalently,
$\Lambda_{1,G}^{-1}(s)=+\infty$ for all $s>0$ small enough.

\begin{notation} By elementary comparison arguments, for any two symmetric 
finitely supported probability measures $\phi_1,\phi_2$ with generating support on a group $G$,
we have 
$$\Lambda_{1,G,\phi_1}\simeq \Lambda_{1,G,\phi_2}
\mbox{ and } \Lambda_{2,G,\phi_1}\simeq \Lambda_{2,G,\phi_2}.$$
For this reason we often denote by
$$ \Lambda_{1,G}\;\; (\mbox{ resp. }\; \Lambda_{2,G})$$
the $\simeq$-equivalence class of $\Lambda_{1,G,\phi}$ (resp. $\Lambda_{1,G,\phi}$) with $\phi$ as above. By abuse of notation, we sometimes write $$\Lambda_{p,G}=\Lambda_{p,G,\phi}$$
or understand $\Lambda_{p,G}$ as standing for a fixed representative. 
\end{notation}

\begin{remark} \label{enlarge}
In the definition of $\Lambda_{p,G,\phi}$ (here, $p=1,2$), 
it is not required that $\phi$ generates $G$. In particular, if 
$G_1$ is a subgroup of a group $G_2$ and $\phi$ 
is a symmetric measure supported on $G_1$
then we can consider $\Lambda_{p,G_i,\phi}$ for $i=1,2$. Simple considerations
imply that, in such cases, $\Lambda_{p,G_1,\phi}=\Lambda_{p,G_2,\phi}$. In some instance, 
it might nevertheless be much easier to estimate $\Lambda_{p,G_2,\phi}$ than 
$\Lambda_{p,G_1,\phi}$ directly.  If $\phi$ is finitely supported and
$G_2$ is finitely generated then a simple comparison argument yields 
$\Lambda_{p,G_1,\phi}
\le C(\phi,G_1,G_2) \Lambda_{p,G_2}$.
\end{remark}

We end this section by recalling briefly the fundamental relations that relate the spectral profile $\Lambda_{2,G,\phi}$ to the probability of return
$\mathbf P^{\idt}_\phi(X^r_{2n}=\id)=\phi^{(2n)}(\id)$. 
If $\psi$ is defined as a function of $t$ by $t=\int_1^{1/\psi} \frac{ds}{s\Lambda_{2,\phi}(s)}$ then $\phi^{(2n)}(\id)\lesssim \psi(n)$. 
In the other direction, i.e., for a lower bound on $\Lambda_{2,\phi}$ in terms of $\phi^{(2n)}(\id)$,  see, e.g., \cite[Theorem 2.3]{SCZ-AOP2016}. 
These results are both essentially from \cite{CNash}. For nicely behaved functions, they imply 
that a two-sided estimate of $\Lambda_{2,\phi}$ is equivalent to a two-sided estimate of $\phi^{(2n)}(\id)$.

\section{Rooted gluing of Cayley graphs }  \setcounter{equation}{0} \label{sec-Pocket}

The aim of this section is to prove two complementary theorems which, together, provide matching upper and lower bounds for the $L^1$- and $L^2$-isoperimetric profiles $\Lambda_{1,\Gamma,q}$ and $\Lambda_{2,\Gamma,q}$ for the group $\Gamma$ associated with the rooted gluing (at the identity element) of $\ell$ labelled Cayley graphs $(\Gamma_i,S_i)$, $i=1,\dots,\ell$, equipped with a symmetric measure $q$ of the form
\begin{equation}\label{def-q}
q=\ell^{-1}\sum_1^\ell \mu_i 
\end{equation} where each $\mu_i$ is a symmetric probability measure on $\Gamma_i$ with generating support and each $\Gamma_i$ is viewed as a subgroup 
of $\Gamma$ through the obvious identification.  
The results are expressed in terms of the isoperimetric profiles 
$$\Lambda_{p,i}=\Lambda_{p,\Gamma_i,\mu_i}$$ of the pairs $(\Gamma_i,\mu_i)$, $p=1$ or $2$.   
The measures $\mu_i$ are assumed to be symmetric but they are otherwise arbitrary.

\subsection{Commutator computations} \label{sec-com}
We will need the following lemma.  Recall from Example \ref{exa-R} that $\Gamma$ is defined by its action on 
$$X=\{o\}\cup \left(\cup_1^\ell(\Gamma_i\setminus\{\id_{\Gamma_i}\})\right),$$
and that $\Gamma\leq \mathbb S(X)$ contains a copy of each $\Gamma_i$. The following computations shows that it also contains $\mathbb A_0(X)$.
\begin{lemma} \label{lem-com}
For $g_i\in \Gamma_i$, $g_j\in \Gamma_j$, $i\neq j$, we have
$$[g_i,g_j]= g_ig_jg_i^{-1}g_j^{-1}= (o,g_i,g_j)\in \mathbb A_0(X).$$
If $\sigma \in \mathbb A_0(X)$ has finite support $U$ contained in $ \{o\} \cup (X\setminus \Gamma_j) $ then, for all $g_j\in \Gamma_j$,
$[\sigma,g_j]$ has support in $U \cup \{o,g_j\}$. In fact,
$$[\sigma,g_j]=\left\{\begin{array}{ll} (o,\sigma(o),g_j) &\mbox{ if } \sigma(o)\neq o\\
\idm & { otherwise.}\end{array}\right.$$ 
\end{lemma}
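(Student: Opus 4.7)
The proof is a direct but careful computation of compositions of permutations in $\mathbb{S}(X)$, where $X=\{o\}\cup\bigcup_i(\Gamma_i\setminus\{\id_{\Gamma_i}\})$. The key structural fact to keep in view is that each $h\in\Gamma_i$ acts on $X$ as left-translation on its embedded copy $\Gamma_i\subset X$ (with $\id_{\Gamma_i}$ identified with $o$) and fixes every point of $X\setminus\Gamma_i$. In particular, for $i\neq j$, the action of $\Gamma_i$ and $\Gamma_j$ only ``interact'' through the single common vertex $o$, which is the mechanism behind both statements.

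For the first assertion I will evaluate $[g_i,g_j]=g_ig_jg_i^{-1}g_j^{-1}$ at each vertex of $X$. A direct four-step tracking gives $o\mapsto g_j^{-1}\mapsto g_j^{-1}\mapsto o\mapsto g_i$, then $g_i\mapsto g_i\mapsto o\mapsto g_j\mapsto g_j$, and $g_j\mapsto o\mapsto g_i^{-1}\mapsto g_i^{-1}\mapsto o$. At every other $x\in X$ one of two things happens: either $x\notin\Gamma_i\cup\Gamma_j$, in which case all four factors fix it; or $x\in\Gamma_i\setminus\{o,g_i\}$ (respectively $x\in\Gamma_j\setminus\{o,g_j\}$), in which case the pair $g_i^{\pm 1}$ (respectively $g_j^{\pm 1}$) translates $x$ back to itself while the other pair fixes the intermediate points because they lie in $\Gamma_i\setminus\{o\}$ (respectively $\Gamma_j\setminus\{o\}$). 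Thus $[g_i,g_j]$ coincides with the $3$-cycle $(o,g_i,g_j)$, which is even, hence in $\mathbb{A}_0(X)$.

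For the second assertion the essential input is that the support $U$ of $\sigma$ avoids $\Gamma_j\setminus\{o\}$, so $\sigma$ commutes with $g_j$ ``away from $o$''. I will again compute $[\sigma,g_j](x)=\sigma g_j\sigma^{-1}g_j^{-1}(x)$ by case analysis. For $x\notin\Gamma_j$ with $x\notin U$, all four factors act trivially. For $x\in\Gamma_j\setminus\{o,g_j\}$, the factor $g_j^{-1}$ translates $x$ to another element of $\Gamma_j\setminus\{o\}$, which lies outside $U$ and is thus fixed by $\sigma^{-1}$; then $g_j$ undoes the translation and $\sigma$ fixes the result. For $x\in U\setminus\{o,\sigma(o)\}$, the factor $g_j^{-1}$ fixes $x$ (since $x\notin\Gamma_j$), $\sigma^{-1}$ moves it inside $U\setminus\{o\}\subset X\setminus\Gamma_j$, $g_j$ fixes the intermediate point, and $\sigma$ restores $x$. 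Thus the support of $[\sigma,g_j]$ is contained in $\{o,\sigma(o),g_j\}\subset U\cup\{o,g_j\}$.

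It remains to evaluate $[\sigma,g_j]$ at $o$, $\sigma(o)$ and $g_j$. For $o$: $g_j^{-1}(o)=g_j^{-1}\in\Gamma_j\setminus\{o\}$, which is fixed by $\sigma^{-1}$; then $g_j$ sends it back to $o$, and finally $\sigma(o)$ is returned. For $g_j$: $g_j^{-1}(g_j)=o$, $\sigma^{-1}(o)$ lies in $U\subset\{o\}\cup(X\setminus\Gamma_j)$ so is fixed by $g_j$, and applying $\sigma$ sends it back to $o$. For $\sigma(o)$ (when $\sigma(o)\neq o$): this point lies in $U\setminus\{o\}\subset X\setminus\Gamma_j$, so $g_j^{-1}$ fixes it, $\sigma^{-1}$ sends it to $o$, $g_j$ sends $o$ to $g_j$, which is fixed by $\sigma$. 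Combining these three computations yields $o\mapsto\sigma(o)\mapsto g_j\mapsto o$ when $\sigma(o)\neq o$ and the identity otherwise, which is exactly the stated formula. The only subtlety, and the step I would double-check carefully, is tracking the preimage $\sigma^{-1}(o)$ in the case $\sigma(o)\neq o$; but since $\sigma^{-1}$ permutes $U$, this preimage automatically lies in $U\setminus\{o\}\subset X\setminus\Gamma_j$ and is therefore fixed by $g_j$, which is what makes the computation close up cleanly.
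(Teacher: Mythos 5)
Your proof is correct and is exactly the verification the paper leaves to the reader (its proof just fixes the meaning of the cycle notation and says ``the two computations are done by inspection''), carried out with the same conventions: the product $g_ig_jg_i^{-1}g_j^{-1}$ acts with the rightmost factor first, each $\Gamma_i$ moves only $\{o\}\cup(\Gamma_i\setminus\{\id\})$, and the cycle $(o,g_i,g_j)$ is read as $o\mapsto g_i\mapsto g_j\mapsto o$. The only cosmetic point is that your evaluation at $g_j$ tacitly assumes $\sigma(o)\neq o$ (so that $\sigma^{-1}(o)\in U\setminus\{o\}$ is indeed fixed by $g_j$); in the case $\sigma(o)=o$ one has $o\notin U$, the supports of $\sigma$ and $g_j$ are disjoint, and the commutator is trivially the identity, as you conclude.
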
 
\begin{proof} The notation $(o,g_i,g_j)$ stands for the element of $\mathbb A_0(X)\subset \Gamma$ which takes  the label at vertex $o$ to $g_i$, the label at vertex $g_i$ to $g_j$ and the label at vertex $g_j $ to $o$. The two computations are done by inspection.
\end{proof}

\begin{lemma} \label{lem-com2}
For $g_i\in \Gamma_i$, $i\in \{r,s,t\}$,  $r\neq s$, we have
$$ g_rg_sg_t=\left\{\begin{array}{ll} g_sg_rg_t (g_t^{-1},g_r^{-1},g_s^{-1})  &\mbox{ if }  r\neq t\\
g_sg_rg_t (g_t^{-1},(g_tg_r)^{-1},g_s^{-1})&\mbox{ if }  r= t. \end{array}\right.$$
\end{lemma}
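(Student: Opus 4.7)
The plan is to reduce the identity to showing that a single three-cycle $\sigma$ coincides with the prescribed expression. Setting $\sigma := (g_sg_rg_t)^{-1}(g_rg_sg_t)$, the statement is equivalent to identifying $\sigma$ with the displayed three-cycle. Direct cancellation yields
$$\sigma = g_t^{-1}\bigl(g_r^{-1}g_s^{-1}g_rg_s\bigr)g_t = g_t^{-1}\,[g_r^{-1},g_s^{-1}]\,g_t,$$
and since $r\neq s$, Lemma \ref{lem-com} applied to $g_r^{-1}\in\Gamma_r$ and $g_s^{-1}\in\Gamma_s$ identifies the inner commutator with the three-cycle $(o,g_r^{-1},g_s^{-1})\in\mathbb A_0(X)$.

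The second ingredient is the standard identity $h^{-1}(a,b,c)h=(h^{-1}(a),h^{-1}(b),h^{-1}(c))$ for conjugation of a cycle by $h\in\mathbb S(X)$, applied with $h=g_t$, to get
$$\sigma = \bigl(g_t^{-1}(o),\,g_t^{-1}(g_r^{-1}),\,g_t^{-1}(g_s^{-1})\bigr).$$
The three entries are then evaluated using the defining action of $g_t$ on $X$: it acts by left multiplication on vertices of $\Gamma_t\subset X$ (with the root $o$ playing the role of $\id_{\Gamma_t}$) and trivially on each $\Gamma_j\setminus\{o\}$ with $j\neq t$. In particular $g_t^{-1}(o)=g_t^{-1}$ in every case.

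The two cases of the lemma then arise depending on whether $g_t^{-1}$ fixes $g_r^{-1}$ or moves it (noting that the configuration $s\neq t$ is automatic when $r=t$, since the hypothesis requires $r\neq s$). If $r\neq t$, then $g_t^{-1}$ fixes both $g_r^{-1}$ and $g_s^{-1}$, yielding $\sigma=(g_t^{-1},g_r^{-1},g_s^{-1})$, which matches the first formula. If $r=t$, then $g_s^{-1}$ is still fixed but $g_r^{-1}\in\Gamma_t$, so $g_t^{-1}$ acts on it by left multiplication, giving $g_t^{-1}(g_r^{-1})=g_t^{-1}g_r^{-1}$ and hence the three-cycle appearing in the second formula. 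No substantial obstacle arises; the only point requiring care is tracking which of $g_r^{-1}$ and $g_s^{-1}$ lies in the factor $\Gamma_t$ where $g_t^{-1}$ acts non-trivially, and this is immediate from the construction of the rooted gluing.
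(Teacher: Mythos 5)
Your proposal follows the paper's own route: the paper's proof is exactly "apply Lemma \ref{lem-com} to the commutator and conjugate the resulting three-cycle by $g_t$", and your write-up is a correct, more explicit execution of that one-liner. In fact your orientation of the commutator is the right one: the cancellation produces $g_t^{-1}\bigl(g_r^{-1}g_s^{-1}g_rg_s\bigr)g_t=g_t^{-1}[g_r^{-1},g_s^{-1}]g_t=g_t^{-1}(o,g_r^{-1},g_s^{-1})g_t$ under the convention $[a,b]=aba^{-1}b^{-1}$ of Lemma \ref{lem-com}, and conjugation by $g_t$ with the paper's conventions (left translations, rightmost factor acting first) reproduces the first displayed formula exactly.

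Two loose ends are worth flagging. First, your case split is not exhaustive for the statement as written: the hypothesis only forces $r\neq s$, so the branch "$r\neq t$" still allows $s=t$, and there your assertion that $g_t^{-1}$ fixes $g_s^{-1}$ fails; the conjugation instead yields $\bigl(g_t^{-1},\,g_r^{-1},\,g_t^{-1}g_s^{-1}\bigr)=\bigl(g_t^{-1},\,g_r^{-1},\,(g_sg_t)^{-1}\bigr)$, so that subcase needs to be treated (and the displayed first formula amended accordingly) rather than passed over. Second, in the case $r=t$ your computation gives middle entry $g_t^{-1}(g_r^{-1})=g_t^{-1}g_r^{-1}=(g_rg_t)^{-1}$, which is not literally the printed $(g_tg_r)^{-1}=g_r^{-1}g_t^{-1}$ unless $g_r$ and $g_t$ commute; since your value is the one forced by the same conventions that verify the first case, you should point out the discrepancy with the stated formula instead of asserting a match. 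Neither issue affects how the lemma is used later (in the proof of Theorem \ref{th-Test1} only the support of the correcting permutation matters), but a complete proof of the statement should address both points explicitly.
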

\begin{proof} Lemma \ref{lem-com} gives us $[g_s^{-1},g_r^{-1}]= (o,g_s^{-1},g_r^{-1}) $ and conjugaison of this cycle by $g_t$ gives the desired result. 
\end{proof}

\begin{lemma} \label{lem-rep} Assume that, for some $0\le \ell'\le \ell$,  the groups $\Gamma_i$ with $i=1,\dots,\ell'$ are infinite, and the groups $\Gamma_i$ with $i=\ell'+1,\dots,\ell$ are finite. 
Then, as a set, the group $\Gamma$ associated with the rooted gluing (at the identity element) of the $\ell$ labelled Cayley graphs $(\Gamma_i,S_i)$
satisfies $$\Gamma= \Gamma_1\times \cdots \times \Gamma_{\ell'} \times \mathbb P_0  \mbox{ (only as sets)},$$ 
where $$\mathbb P_0=\left\{\begin{array}{l} \mathbb S_0(X) \mbox{ if  there  is at least one $\Gamma_i $ which is finite of even order},\\
\mathbb A_0(X) \mbox{ if  none of the $\Gamma_i$ is finite or each finite $\Gamma_i$ has odd order}. \end{array}\right.$$
 Namely, any element $\gamma$ of $\Gamma$ has a unique representation
$$\gamma= \gamma_1\dots\gamma_{\ell'} \tau,\;\;\gamma_i\in \Gamma_i, \;1\le i\le \ell',\;\;\tau\in \mathbb P_0.$$
and all such products appear in $\Gamma$.   In fact, there is a short exact sequence
$$ 1 \ra \mathbb P_0 \ra \Gamma \ra \Gamma_1\times \cdots\times \Gamma_{\ell'}\ra 1.$$
\end{lemma}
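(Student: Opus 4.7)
The plan is to establish the existence of the decomposition via a sorting procedure, then define projection homomorphisms to infinity to prove uniqueness and produce the short exact sequence.

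\emph{Existence of the decomposition.} Any $\gamma\in\Gamma$ is given by a word in the generating set $\bigcup_i S_i$. Lemma~\ref{lem-com2} lets me swap two adjacent letters from distinct $\Gamma_r,\Gamma_s$ at the cost of multiplying by a $3$-cycle of the form $(o,x,y)$, which lies in $\mathbb A_0(X)$. Since $\mathbb A_0(X)$ is closed under conjugation by any element of $\mathbb S(X)$, such error terms can be collected and pushed to the right end of the word. Sorting letters so that all $\Gamma_1$-letters come first, then all $\Gamma_2$-letters, etc., I obtain
\[ \gamma = g_1 g_2\cdots g_\ell\,\sigma, \qquad g_i\in\Gamma_i,\ \sigma\in\mathbb A_0(X). \]
For $i>\ell'$ the factor $g_i$ has finite support, so $\tau:=g_{\ell'+1}\cdots g_\ell\,\sigma$ lies in $\Gamma\cap\mathbb S_0(X)$. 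Under the dichotomy defining $\mathbb P_0$ one checks that $\tau\in\mathbb P_0$: this is automatic if $\mathbb P_0=\mathbb S_0(X)$, while if $\mathbb P_0=\mathbb A_0(X)$ each $g_i$ ($i>\ell'$) is itself an even permutation (as its orbits lie in the odd-order group $\Gamma_i$) and so is $\tau$.

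\emph{Projections to infinity.} For each $i\le \ell'$ I define a homomorphism $\pi_i:\Gamma\to\Gamma_i$ by induction on the length of a word representing $\gamma$. The inductive claim is: there exist a finite $F\subset \Gamma_i$ and $h=h_i(\gamma)\in\Gamma_i$ such that $\gamma\cdot x = hx$ for every $x\in\Gamma_i\setminus F$. The inductive step uses the local structure of the gluing: every generator in $S_j$ with $j\ne i$ carries a self-loop at each vertex of $\Gamma_i\setminus\{o\}$ (hence acts there as the identity), while every generator in $S_i$ acts on $\Gamma_i$ by left multiplication. Thus $h$ depends only on the permutation $\gamma$ and $\pi_i(\gamma):=h$ is a well-defined group homomorphism.

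\emph{Short exact sequence and uniqueness.} Assembling, $\pi:=(\pi_1,\dots,\pi_{\ell'}):\Gamma\to\Gamma_1\times\cdots\times\Gamma_{\ell'}$. By construction: $\pi_i|_{\Gamma_i}=\idm$, $\pi_i(g_j)=\idm$ for $j\ne i$ (including every finite $\Gamma_j$, whose action fixes $\Gamma_i\setminus\{o\}$ pointwise), and $\pi_i(\mathbb P_0)=\{\idm\}$ since $\mathbb P_0\subset\mathbb S_0(X)$ has finite support. Hence $\mathbb P_0\subseteq\ker\pi$, and the existence step both shows $\pi$ is surjective and gives the reverse inclusion. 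Applying $\pi_i$ to $g_1\cdots g_{\ell'}\tau = g_1'\cdots g_{\ell'}'\tau'$ recovers $g_i=g_i'$ for $i\le\ell'$, and cancelling yields $\tau=\tau'$, establishing uniqueness. The short exact sequence follows.

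\emph{Main obstacle.} The delicate point is the parity bookkeeping in the two places where $\mathbb P_0$ intervenes: verifying $\mathbb P_0\subseteq\Gamma$ in the even-order case (ensuring some finite $\Gamma_i$ actually contributes an odd permutation in its regular representation), and conversely checking $\Gamma\cap\mathbb S_0(X)\subseteq\mathbb P_0$ in the odd-order case. Both reduce to tracking parity through the sorting procedure: the $3$-cycles produced by Lemma~\ref{lem-com2} are even, and each $g_i$ with $i>\ell'$ has parity controlled by whether its $\Gamma_i$ is in the ``even-order'' side of the dichotomy.
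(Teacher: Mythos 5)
Your existence and uniqueness arguments follow essentially the same route as the paper: sort a word in the generators using the commutator identities of Lemma~\ref{lem-com2} (the correction terms are $3$-cycles through $o$, hence lie in the normal subgroup $\mathbb A_0(X)$ of $\mathbb S(X)$), absorb the finite factors into the finite-support part with the parity check for odd-order factors, and recover each $\gamma_i$, $i\le \ell'$, from the action of $\gamma$ at infinity on the end $\Gamma_i$; your homomorphisms $\pi_i$ are simply an explicit form of the paper's ``action at infinity'' and yield uniqueness and the exact sequence exactly as the paper does.

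The genuine gap is the inclusion $\mathbb P_0\subseteq\Gamma$, i.e.\ the assertion that \emph{all} products $\gamma_1\cdots\gamma_{\ell'}\tau$ with $\tau\in\mathbb P_0$ actually occur. This is needed for the set equality, already for the line ``$\mathbb P_0\subseteq\ker\pi$'' (which presupposes $\mathbb P_0\le\Gamma$), and for exactness of the sequence at $\mathbb P_0$. You defer it to your ``main obstacle'' paragraph and claim it ``reduces to tracking parity through the sorting procedure''; it does not. Parity bookkeeping only gives the opposite containment $\Gamma\cap\mathbb S_0(X)\subseteq\mathbb P_0$ (no finite-support element of $\Gamma$ is odd in the odd/infinite case); it cannot produce elements of $\Gamma$. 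What is required is a generation argument, and this is precisely what the paper invokes: by Lemma~\ref{lem-com} the $3$-cycles $(o,g_i,g_j)=[g_i,g_j]$, $g_i\in\Gamma_i$, $g_j\in\Gamma_j$, $i\neq j$, belong to $\Gamma$, and (using the second statement of Lemma~\ref{lem-com}, or conjugation) the $3$-cycles through $o$ generate $\mathbb A_0(X)$; in the even case one must in addition exhibit an odd finite-support element of $\Gamma$. Your own parenthetical worry there is in fact well founded and cannot be settled by bookkeeping: a finite group of even order acts by odd permutations in its regular representation only when it contains an element whose order carries the full $2$-part of the group order (for instance, every element of the Klein four group acts as a product of two transpositions, hence evenly), so producing the odd permutation is a genuine step, not a formality. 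As written, your proposal leaves the ``all such products appear in $\Gamma$'' half of the lemma unproved.
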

\begin{proof} Since $ [\Gamma_i,\Gamma_j]\leq \mathbb A_0(X)$ for $i\neq j$ and that $\Gamma=\langle \Gamma_1,\dots, \Gamma_\ell\rangle $ (viewing each $\Gamma_i$ as a subgroup of $\Gamma$), it is obvious that any element of $\gamma\in \Gamma$ has a representation of the form
$\gamma= \gamma_1\dots\gamma_\ell \tau$ with $\tau\in \mathbb A_0(X)$. Any element $\gamma_i$ that belongs to  a finite $\Gamma _i$ is in $\mathbb S_0(X)$.  This implies that any element of $\gamma\in \Gamma$ has a representation of the form
$\gamma= \gamma_1\dots\gamma_{\ell'} \tau$ with $\tau\in \mathbb P_0$ where $P_0$ is as described above. 
Uniqueness comes from the fact that each of the  $\gamma_i\in \Gamma_i$, $i=1,\dots,\ell'$, is determined uniquely by the action of $\gamma$  on the end $\Gamma_i$ of $X$ at infinity. That any such product does occur follows from the computations in Lemma   
\ref{lem-com} and the fact that the set of all three cycles generate $\mathbb A_0(X)$.
 \end{proof}

\begin{remark} When all the $\Gamma_i$'s are finite, the group $\Gamma$ is finite and equal to either $\mathbb A_0(X)$ or $\mathbb S_0(X)$ with the latter occurring if and only if at least one of the $\Gamma_i$'s has even order. With finite groups, this construction is interesting in so far as it provides a way to construct interesting generating sets for some alternating and symmetric groups.  See \cite{pocketfinite}.
\end{remark} 

\begin{remark} In the short exact sequence described above, the projection onto $\Gamma_1\times \cdots\times \Gamma_{\ell'}$ is given by the action at infinity on each infinite $\Gamma_i$. The sequence does not split when there are more than one infinite $\Gamma_i$ because, although $\Gamma_1$ and $\Gamma_2$ appear in a canonical way as subgroups of $\Gamma$, the  direct  product 
$\Gamma_1\times \Gamma_2$ does not. 
\end{remark}

\begin{remark} What happens if one consider more intricate gluing along some finite subsets of vertices instead of the present rooted gluing at one point?
The overall structure of the groups $\Gamma$ obtained through gluing over finite subsets is roughly the same as that described above. The main possible difference is the exact nature of the subgroup $\mathbb P_0\subset \mathbb S(X)$ that might appear. In most cases, it is possible to show that this subgroup is again either $\mathbb A_0(X)$ or all of $\mathbb S_0(X)$ but some specific configurations may lead to $\mathbb P_0$ being a smaller subgroup of $\mathbb S_0(X)$. In any case, such examples appear to necessitate ad hoc considerations depending of the exact nature of the gluing.  We will not pursue this here but note that, assuming that at least two of the $\Gamma_i$ are infinite (and that the gluing is over finite sets), the group $\mathbb P_0$ always acts transitively on $X$.
\end{remark}

\subsection{Statements of the main results}

Recall that we are given a finite collection of Cayley graphs $(\Gamma_i,S_i)$, $1\le i\le \ell$, each equipped with symmetric probability measure $\mu_i$.
The indexing of these groups is chosen so that the first $\ell'$ of them are infinite and the remaining groups are all finite. We assume throughout that $\ell'\ge 1$, that is, $\Gamma_1$ is infinite.  In this case,  we know describe the isoperimetric and spectral profiles of  the measure $q$ at (\ref{def-q}) on the group $\Gamma$ associated with the rooted gluing (at the identity element) of $\ell$ labelled Cayley graphs $(\Gamma_i,S_i)$, $i=1,\dots,\ell$, in terms of the isoperimetric and spectral profiles  of the pairs $(\Gamma_i,\mu_i)$, $1\le i\le \ell'$.  To simplify notation, we set $\Lambda_{p,i}=\Lambda_{p,\Gamma_i,\mu_i}$, $p=1,2$.

\begin{theorem}[Lower-bound]  \label{th-E} For $p=1,2$ (corresponding respectively to isoperimetric and spectral profile)
and referring to the setup described above,  there are constants $c_1(p), c_2(p)>0$ such that the  isoperimetric profile $\Lambda_{p,\Gamma,q}$  of the symmetric probability measure $q$ defined  on $\Gamma$ at {\em (\ref{def-q})}, satisfies
$$\forall\; v,s>0,\;\;\Lambda_{p,\Gamma,q}(v)\ge \frac{c_1(p)s}{\ell}   \mbox{ for all } v\le \max_{1\le i\le  \ell'}\left\{(c_2(p)\Lambda^{-1}_{p,i}(s))!\right\}.$$
In particular, there exists $v_0$ such that
$$\forall\; v\ge v_0,\;\;\Lambda_{p,\Gamma,q}(v)\gtrsim \max_{1\le i\le \ell'}\left\{\Lambda_{p,\Gamma_i,\mu_i} \left(\frac{\log(1+ v)}{\log(1+\log(1+ v))}\right)\right\}.$$
\end{theorem}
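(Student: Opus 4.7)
The plan is to combine a coset-by-coset decomposition of each Dirichlet form $\mathcal{E}_{\mu_i,\Gamma}$ with a combinatorial estimate exploiting the large permutation subgroup $\mathbb{P}_0\subset\Gamma$, whose F\o lner function grows factorially. First, for each $i\in\{1,\ldots,\ell'\}$, the left action of $\Gamma_i$ on $\Gamma$ partitions $\Gamma$ into right cosets $C=\Gamma_i\gamma_0$, and the bijection $g\mapsto g\gamma_0$ identifies each such coset with $\Gamma_i$ in a way that transports $\mathcal{E}_{\mu_i,\Gamma}|_C$ to $\mathcal{E}_{\mu_i,\Gamma_i}$. Consequently, if $f$ is supported in $\Omega\subset\Gamma$ and $f_C$ denotes the corresponding function on $\Gamma_i$,
\[
\mathcal{E}_{\mu_i,\Gamma}(f,f)=\sum_{C\in\Gamma_i\backslash\Gamma}\mathcal{E}_{\mu_i,\Gamma_i}(f_C,f_C)\ge \Lambda_{p,i}\bigl(V_i(\Omega)\bigr)\,\|f\|_p^p,
\]
where $V_i(\Omega)=\max_{C}|\Omega\cap C|$. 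Combined with $\mathcal{E}_q=\ell^{-1}\sum_i\mathcal{E}_{\mu_i}$ and an analogous co-area argument for $p=1$, this produces the baseline estimate $\Lambda_{p,\Gamma,q}(\Omega)\ge \ell^{-1}\max_i\Lambda_{p,i}(V_i(\Omega))$ valid for every $\Omega\subset\Gamma$.

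The heart of the proof is a combinatorial dichotomy that converts this baseline into the stated factorial-scale bound. The idea is that, by the normal form of Lemma \ref{lem-rep} and the commutator identities of Lemmas \ref{lem-com} and \ref{lem-com2}, if a single right coset $\Gamma_i\gamma_0$ contains many $\Omega$-elements, then the corresponding $g$-parametrized elements $g\gamma_0\in\Omega$ have $\tau$-components spreading over many three-cycles of the form $(o,g^{-1},\gamma_j^{-1})$ ($j\neq i$), whose supports collectively involve a correspondingly large subset $Y\subset X$; since $|\{\tau\in\mathbb{P}_0:\operatorname{supp}(\tau)\subset Y\}|\le |Y|!$, the presence of such a $\tau$-spread forces a factorial lower bound on $|\Omega|$. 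This yields the dichotomy: either some $V_i(\Omega)\le c_2\Lambda_{p,i}^{-1}(s)$, in which case the baseline estimate immediately gives $\Lambda_{p,\Gamma,q}(\Omega)\ge c_1s/\ell$ through direction $i$ (using the Remark \ref{enlarge}-style stability of $\Lambda_{p,i}$ under a bounded multiplicative change of argument), or every $V_i(\Omega)> c_2\Lambda_{p,i}^{-1}(s)$, in which case a boundary-counting argument anchored in $|Y|!$ produces a factorial lower bound $|\Omega|\ge (c_2\Lambda_{p,i^*}^{-1}(s))!$ for the index $i^*$ achieving the maximum in the statement. Contrapositively, $v\le\max_i(c_2\Lambda_{p,i}^{-1}(s))!$ rules out the second alternative, proving the first form of the theorem. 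The asymptotic second form then follows by choosing $s=\max_i\Lambda_{p,i}(\log(1+v)/\log\log(1+v))$ and inverting the factorial relation via Stirling's formula $(c_2n)!\asymp v\leftrightarrow n\asymp\log(1+v)/\log\log(1+v)$.

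The principal obstacle is quantifying the combinatorial dichotomy precisely. While the heuristic is transparent---namely, that $\mathbb{P}_0$ contributes to $|\Omega|$ at the factorial scale and that a rich $\tau$-spread inside $\Omega$ forces $|\Omega|$ to be at least factorial in the support size---different elements of $\Omega$ may have $\tau$-components supported on arbitrary finite regions of $X$, so the counting must carefully localize these supports. A clean argument will therefore likely proceed by induction on $\ell'$, or by a simultaneous treatment of several indices $i$, exploiting the fact that the three-cycles generated by the commutator identities of Lemmas \ref{lem-com} and \ref{lem-com2} span a full alternating (or symmetric) subgroup on the relevant support in $X$.
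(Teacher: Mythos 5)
Your coset-by-coset decomposition and the resulting baseline bound $\Lambda_{p,\Gamma,q}(\Omega)\ge \ell^{-1}\max_i\Lambda_{p,i}(V_i(\Omega))$ are fine (this is essentially the content of Remark \ref{enlarge}), but the combinatorial dichotomy that is supposed to do the real work is false as stated, and it is exactly the step you flag as ``the principal obstacle''. The implication ``every $V_i(\Omega)>c_2\Lambda_{p,i}^{-1}(s)$ forces $|\Omega|\ge (c_2\Lambda_{p,i^*}^{-1}(s))!$'' cannot hold for arbitrary finite $\Omega$: take $\Omega=\bigcup_{1\le i\le \ell'}A_i\gamma^{(i)}$ with each $A_i\subset\Gamma_i$ of size just above $c_2\Lambda_{p,i}^{-1}(s)$; then every $V_i(\Omega)$ exceeds the threshold while $|\Omega|\approx\sum_i|A_i|$ is nowhere near factorial. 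Such an $\Omega$ does not contradict the theorem (its boundary in the directions $j\ne i$ is large), but it does contradict your dichotomy, whose second branch never uses the small-boundary hypothesis --- and without that hypothesis no factorial lower bound on $|\Omega|$ is available. Relatedly, the heuristic that a coset rich in $\Omega$-points creates a ``$\tau$-spread'' which ``forces a factorial lower bound on $|\Omega|$'' is backwards: the inequality $\#\{\tau\in\mathbb P_0:\operatorname{supp}(\tau)\subset Y\}\le |Y|!$ is an upper bound on the available permutations, and nothing forces $\Omega$ to contain more elements than the $V_i(\Omega)$ points of that single coset. The paper's route makes the boundary hypothesis the engine of the combinatorics, after first reducing by subgroup comparison to the auxiliary group $\Gamma_i^*=\Gamma_i\ltimes\mathbb A_0(\Gamma_i\cup\{\beta^{\pm1}\})$ generated by $\Gamma_i$ and a single $2$- or $3$-cycle supplied by the other factors: Lemma \ref{lem-E1} shows that $q(\partial U)\le s|U|$ forces most elements of $U$ to have an $a$-satisfactory permutation component with $a\simeq\Lambda_{1,G,\mu}^{-1}(s)$; the edge-removal Lemma \ref{lem-E2} extracts a subgraph of permutation components in which every vertex keeps $\gtrsim a$ neighbours; and Lemma \ref{lem-E3} shows by induction that such a graph has at least $\lfloor a/8\rfloor!$ vertices, whence $|U|\ge |K(U)|$ is factorial (Theorem \ref{th-beta23}), which is the statement your second branch would need but does not prove.

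There is a second gap: you treat $p=2$ as a formal analogue of $p=1$, but the satisfactory/edge-removal machinery is genuinely an $L^1$ (set-boundary) argument, and an $L^2$ version is not immediate, especially since the $\mu_i$ are allowed to have unbounded support. The paper does not redo the combinatorics for $p=2$; it transfers the $L^1$ result for the auxiliary group via comparison with the spread-out measures of \cite[Theorem 4.7]{SCZ-AOP2016} together with the Cheeger-type inequality (\ref{Cheeger}), yielding Theorem \ref{th-beta23(2)}, and only then compares back to $(\Gamma,q)$. If you wish to pursue your direct approach inside $\Gamma$, you would need (i) a counting argument in which the hypothesis $q(\partial\Omega)\le s|\Omega|$ (or $\mathcal E_q(f,f)\le s\|f\|_2^2$) enters from the start, with ``richness'' measured not by $V_i(\Omega)$ but by the number of locations at which elements of $\Omega$ can be moved within $\Omega$ by the other factors, and (ii) a separate mechanism, such as the spread-out-measure comparison, to handle $p=2$.
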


\begin{theorem} [Upper-bound] \label{th-Test1} 
Referring to the setup described above, the isoperimetric profiles $\Lambda_{p,\Gamma,q}$  of $q$ on $\Gamma$, $p=1,2$, satisfies
$$\forall\; v,s>0,\;\;\Lambda_{p,\Gamma,q}(v)\le s   \mbox{ for all } v\ge \left(1+\sum_1^{\ell'} \Lambda^{-1}_{p,i}(s)+\sum_{\ell'<i\le \ell}|\Gamma_i|\right)!.$$
In particular, 
$$\forall\; v> 0,\;\;\Lambda_{p,\Gamma,q}(v)\lesssim \max_{1\le i\le \ell'}\left\{\Lambda_{p,\Gamma_i,\mu_i} \left(\frac{\log(1+ v)}{\log(1+\log(1+ v))}\right)\right\}.$$\end{theorem}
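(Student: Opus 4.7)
The plan is constructive: for each $s>0$, I would build a test set $\Omega\subset\Gamma$ of size at most the claimed factorial whose boundary ratio $q(\partial\Omega)/|\Omega|$ is $O(s)$, together with a corresponding $L^2$ test function for the spectral case. First, by definition of $\Lambda^{-1}_{p,i}$, for each $i\le\ell'$ pick $B_i\subset\Gamma_i$ with $|B_i|\le\Lambda^{-1}_{p,i}(s)$ (nearly) achieving the $L^p$-profile of $(\Gamma_i,\mu_i)$; for $p=2$ I would pick also an extremal function $f_i$ supported on $B_i$. A standard averaging argument applied to the F\o lner inequality $\mu_i(\partial B_i)\le s|B_i|$ also lets one arrange $\mu_i(B_i\cup B_i^{-1})\ge 1-Cs$, which will be needed to handle the possibly infinite-support tails of $\mu_i$.

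Next, viewing each $\Gamma_i\setminus\{\id\}$ as a subset of $X$ via the rooted gluing, I would set
$$U=\{o\}\cup\bigcup_{i\le\ell'}(B_i\cup B_i^{-1})\setminus\{\id\}\cup\bigcup_{i>\ell'}\bigl(\Gamma_i\setminus\{\id\}\bigr)\subset X,$$
so that $|U|\le N:=1+2\sum_{i\le\ell'}\Lambda^{-1}_{p,i}(s)+\sum_{i>\ell'}|\Gamma_i|$. Let $\mathbb P_0(U)\subset\mathbb P_0\subset\Gamma$ denote the finite subgroup of permutations with support in $U$; then $|\mathbb P_0(U)|\ge|U|!/2$. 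Using the unique decomposition from Lemma~\ref{lem-rep}, define
$$\Omega=\bigl\{\gamma_1\cdots\gamma_{\ell'}\tau\;:\;\gamma_i\in B_i,\;\tau\in\mathbb P_0(U)\bigr\},$$
with $|\Omega|=\prod|B_i|\cdot|\mathbb P_0(U)|\le N^{\ell'}|U|!\le(N+\ell')!$, which matches the claimed factorial up to constants absorbed into the $\simeq$ equivalence.

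The heart of the argument would be the boundary estimate. Fix $i$, $y\in\mathrm{supp}(\mu_i)$, and $x=\gamma_1\cdots\gamma_{\ell'}\tau\in\Omega$. Using the commutator identities of Lemmas~\ref{lem-com} and~\ref{lem-com2} to move $y$ past $\gamma_1,\ldots,\gamma_{i-1}$ (or past all of $g=\gamma_1\cdots\gamma_{\ell'}$ when $i>\ell'$, since then $y\in\mathbb P_0$), one rewrites $yx=g'\tau'$ with $g'=\gamma_1\cdots(y\gamma_i)\cdots\gamma_{\ell'}$ (respectively $g'=g$) and $\tau'\in\mathbb P_0$ equal to $\tau$ times an explicit product of $3$-cycles whose vertices lie in $\{o\}\cup\{\gamma_j^{\pm1}:j\ne i\}\cup\{y^{\pm1}\}$ (respectively, in $g^{-1}\cdot\mathrm{supp}(y)$). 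The set $U$ was designed so that for $y\in B_i\cup B_i^{-1}$ (or any $y\in\Gamma_i$ when $i>\ell'$) these vertices all lie in $U$, giving $\tau'\in\mathbb P_0(U)$, so that $yx\in\Omega$ iff $y\gamma_i\in B_i$. The boundary then splits as
$$\frac{q(\partial\Omega)}{|\Omega|}\le\frac{1}{\ell}\sum_{i\le\ell'}\left(\frac{\mu_i(\partial B_i)}{|B_i|}+\mu_i\bigl(\Gamma_i\setminus(B_i\cup B_i^{-1})\bigr)\right)\le Cs.$$
For $p=2$, the analogous product test function $F(\gamma_1\cdots\gamma_{\ell'}\tau)=\prod f_i(\gamma_i)\,\mathbf 1_{\mathbb P_0(U)}(\tau)$ has $\mathcal E_q(F,F)\lesssim s\|F\|_2^2$ by the same rewriting: for good $y$ only the $i$-th factor of $F$ changes under left-multiplication, while bad $y$ contribute at most $\mu_i(\text{bad})\|F\|_2^2$.

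The asymptotic bound then follows by picking $s$ so that $v\asymp N!$ and inverting via Stirling to obtain $N\asymp\log(1+v)/\log\log(1+v)$, yielding $\Lambda_{p,\Gamma,q}(v)\le s\le\max_i\Lambda_{p,i}(cN)$. The principal obstacle is managing $\tau'$: one must simultaneously arrange that $U$ contains every vertex appearing in the $3$-cycles from the commutator moves---including images under the left-translation action of the $\gamma_j$'s on their respective $\Gamma_j$---and that the $\mu_i$-tail $\mu_i(\Gamma_i\setminus(B_i\cup B_i^{-1}))$ is $O(s)$. The latter is the delicate point, addressed by the averaging argument in the first step using the F\o lner condition itself.
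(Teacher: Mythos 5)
Your construction is essentially the paper's proof: decompose $\Gamma$ via Lemma~\ref{lem-rep}, take profile-optimal sets/functions in each infinite factor, multiply by the indicator of the permutations supported in a controlled finite subset of $X$, use the commutator Lemmas~\ref{lem-com}--\ref{lem-com2} to see that left multiplication by a generator only perturbs the permutation part inside that subset, and count the support by a factorial plus Stirling inversion. Two corrections, though. First, the support of the correction permutation is misstated: moving $y\in\Gamma_i$ past $\gamma_1\cdots\gamma_{\ell'}$ produces a cycle supported in $\{o\}\cup\{\gamma_j^{-1}:j\neq i\}\cup\{\gamma_i^{-1},(y\gamma_i)^{-1}\}$, not in $\{o\}\cup\{\gamma_j^{\pm1}:j\neq i\}\cup\{y^{\pm1}\}$; for instance with $\ell'=2$, $i=2$ one gets exactly the $3$-cycle $(\gamma_2^{-1},(y\gamma_2)^{-1},\gamma_1^{-1})$. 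Because your $U$ contains $B_i\cup B_i^{-1}$, this support still lies in $U$ whenever $y\gamma_i\in B_i$, so your key claim ``$yx\in\Omega$ iff $y\gamma_i\in B_i$'' is in fact true (for every $y\in\Gamma_i$, not just $y\in B_i\cup B_i^{-1}$), but the justification as written is incorrect. Second, once the support is identified correctly, your averaging/tail step (arranging $\mu_i(B_i\cup B_i^{-1})\ge 1-Cs$) is superfluous: the vertex that must lie in $U$ is $(y\gamma_i)^{-1}$, and whenever $y\gamma_i\notin B_i$ the test set/function is left anyway, so the Dirichlet form and boundary count factorize exactly and arbitrary symmetric $\mu_i$ --- including infinite-support ones --- are handled with no tail terms. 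This is precisely the paper's observation (\ref{crux})--(\ref{crux'}) (the paper's $V$ consists of permutations supported in $\cup_1^{\ell'}U_i^{-1}\cup\cup_{\ell'<i\le\ell}\Gamma_i$, i.e.\ your $\mathbb P_0(U)$ without needing the symmetrization), and it is what produces the clean inequality $\Lambda_{p,\Gamma,q}(v)\le s$ of the first display rather than $Cs$; your version still suffices for the $\simeq$-level second display.
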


The following statement concerns the special case of the pocket extension of a group $G$. It obviously follows from the previous two results.
\begin{theorem}\label{th-pocket} Let $G$ be an  infinite finitely generated group. Let $G_{\circledast}$ be the pocket extension of $G$. For $p=1,2$ and for all $v>0$,
$$\Lambda_{p,G_{\circledast}}(v)\simeq  \Lambda _{p,G}\left(\frac{\log(1+ v)}{\log(1+\log(1+ v))}\right).$$
\end{theorem}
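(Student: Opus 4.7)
The plan is to recognize Theorem \ref{th-pocket} as a direct corollary of Theorems \ref{th-E} and \ref{th-Test1} once the pocket extension is identified with a rooted gluing. Specifically, by Example \ref{exa-P}, $G_{\circledast}$ is precisely the group arising from the rooted gluing of two labelled Cayley graphs: $(\Gamma_1, S_1) = (G, S)$ for some finite symmetric generating set $S$ of $G$, and $(\Gamma_2, S_2) = (\mathbb{Z}/2\mathbb{Z}, \{*\})$. Since $G$ is infinite while $\mathbb{Z}/2\mathbb{Z}$ is finite, the setup of Section \ref{sec-Pocket} applies with $\ell = 2$ and $\ell' = 1$, and Lemma \ref{lem-rep} confirms that the resulting group is $G \ltimes \mathbb{S}_0(G \cup \{*\})$, matching the definition in Example \ref{exa-P}.

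Next I would choose a concrete symmetric measure $q = \tfrac12(\mu_1 + \mu_2)$ on $G_{\circledast}$, where $\mu_1$ is a symmetric finitely supported generating measure on $G$ (say uniform on $S$) and $\mu_2$ is the uniform measure on $\{\id, *\}$. By the comparison principle recalled in the notation convention after Remark \ref{enlarge}, up to the equivalence relation $\simeq$, $\Lambda_{p, G_{\circledast}, q}$ represents $\Lambda_{p, G_{\circledast}}$ and $\Lambda_{p, G, \mu_1}$ represents $\Lambda_{p, G}$, so it is enough to prove the estimate for this particular $q$.

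The two-sided estimate then follows by applying Theorem \ref{th-E} and Theorem \ref{th-Test1} with $\ell' = 1$ and $\Gamma_1 = G$. The max over $i = 1, \dots, \ell'$ in both conclusions collapses to a single term $\Lambda_{p, G, \mu_1}\bigl(\tfrac{\log(1+v)}{\log(1+\log(1+v))}\bigr)$, yielding matching lower and upper bounds for all $v \ge v_0$. For small $v$, both sides are trivially comparable: either $G$ is amenable, in which case the profile is a right-continuous step function starting at the finite value $s_q = \sum_{g \ne \id} q(g)$ and the equivalence is clear on the bounded range, or $G$ is non-amenable, in which case $G_{\circledast}$ contains $G$ and hence is non-amenable too, so both sides are $\simeq 1$ and the statement is vacuous.

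I do not expect substantive obstacles at this stage: the content of the theorem is entirely absorbed by Theorems \ref{th-E} and \ref{th-Test1}. The only verification worth pausing on is that the pocket extension of Example \ref{exa-P} really does coincide (not merely abstractly but as a subgroup of $\mathbb{S}(G \cup \{*\})$ with the specified generators) with the rooted gluing of Section \ref{sec-Pocket} applied to $(G, S)$ and $(\mathbb{Z}/2\mathbb{Z}, \{*\})$; this is immediate from how both are defined by their action on $G \cup \{*\}$, with translations by $G$ fixing $*$ and the transposition $(\id_G, *)$ appearing as the generator contributed by the second factor.
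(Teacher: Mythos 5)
Your proposal is correct and follows essentially the same route as the paper: the paper also deduces Theorem \ref{th-pocket} directly from Theorems \ref{th-E} and \ref{th-Test1} by viewing $G_{\circledast}$ as the rooted gluing of the Cayley graph of $G$ with that of $\mathbb{Z}/2\mathbb{Z}$ (so $\ell=2$, $\ell'=1$). Your extra remarks on the choice of $q$, the identification with Example \ref{exa-P}, and the small-$v$/non-amenable range are harmless elaborations of what the paper leaves implicit.
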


\subsection{Test functions and proof of the upper-bounds}
This section focuses on the profile upper-bounds stated in Theorem \ref{th-Test1}. We gives the proof for $p=2$ (the case $p=1$ is similar). Recall that, as a set,
$$\Gamma= \Gamma_1\times \cdots \times \Gamma_{\ell'} \times \mathbb P_0.$$

\begin{proof}[Proof of Theorem \ref{th-Test1} for $p=2$].   Fix $s>0$ and $\epsilon>0$.  For each $i\in \{1,\dots,\ell'\}$,  
pick a set $U_i $ and a function $\psi_i$ on $\Gamma_i$ such that
$$ \mathcal E_{\mu_i}(\psi_i,\psi_i)
\le (1+\epsilon) s  \|\psi_i\|_2^2 \mbox{  and } \mbox{ support}(\psi_i)=U_i \mbox{ with } |U_i|\le \Lambda^{-1}_{2,i}(s) .$$

Let $V$ be the set of all elements $\tau$ in $\mathbb P_0\le \Gamma$ with support in
$$\left(\cup_1^{\ell'} U_i^{-1}\right)\cup \left(\cup_{\ell'+1}^{\ell} \Gamma_i\right) $$
where each $U^{-1}_i$ is viewed as a subset of $X$. 

Referring to Lemma \ref{lem-rep}, construct a  test function on $\Gamma= \Gamma_1\times \cdots \times \Gamma_{\ell'} \times \mathbb P_0$ by setting,
for each $\gamma=\gamma_1\dots \gamma_{\ell'} \tau  \in \Gamma$,
$$\psi(\gamma)= \mathbf 1_V(\tau) \prod_1^{\ell'} \psi_i(\gamma_i).$$
Obviously, we have  $$ \|\psi\|_2^2= |V|  \prod_1^{\ell'} \|\psi_i\|_2^2.$$
For any $s_i\in \Gamma_i$ in the support of $\mu_i$, $i\in \{1,\dots,\ell\}$, we compute
$|\psi(s_i\gamma)-\psi(\gamma)|^2$.   Write (uniquely)
$$s_i \gamma= (s_i\gamma)_1\dots(s_i\gamma)_{\ell'} \tau_{s_i\gamma}.$$
Note that, for each $j \in \{1,\dots,\ell'\}$, 
$$(s_i\gamma)_j =\left\{\begin{array}{ll}\gamma_j& \mbox{  if }j\neq i\\
s_i \gamma_i &\mbox{ if } j=i.\end{array}\right.$$
By Lemma \ref{lem-com2},  the permutation $\tau_{s_i \gamma}\in \mathbb P_0$  is the product  of $\tau$ by a permutation supported by 
$$\left\{\begin{array}{cl}\{\gamma_1^{-1}, \dots, (s_i\gamma_i)^{-1},\dots ,\gamma_{\ell'}^{-1}\} & \mbox{ if } \;1\le i\le \ell',\\
\{\gamma_1^{-1}, \dots ,\gamma_{\ell'}^{-1}\}\cup \Gamma _i  &\mbox{ if }\; \ell' <i\le \ell. \end{array}\right.$$  In particular, when $1\le i\le \ell'$, 
\begin{equation} \label{crux}
\tau\in V,\; 
\gamma_j\in U_j, \;1\le j\le \ell', \mbox{ and }s_i\gamma_i\in U_i \mbox{ imply that }
\tau_{s_i\gamma}  \in V, \end{equation} 
and, when $\ell'<i\le \ell$,
\begin{equation} \label{crux'}
\tau\in V,\; 
\gamma_j\in U_j, \;1\le j\le \ell', \mbox{ imply that }
\tau_{s_i\gamma}  \in V. \end{equation} 

Write
$$2\mathcal E_q(\psi,\psi) = \sum_{\psi(\gamma)\psi (s\gamma)\neq 0} |\psi(\gamma)-\psi(s\gamma)|^2\mu(s) + 2\sum_{\psi(\gamma)\neq 0; \psi(s\gamma)=0}|\psi(\gamma)|^2\mu(s)$$
and
\begin{eqnarray*}\lefteqn{
\sum_{\psi(\gamma)\psi (s\gamma)\neq 0} |\psi(\gamma)-\psi(s \gamma)|^2\mu(s)}&&\\
&=&\ell^{-1}\sum_{i=1}^\ell \sum_{ \psi(\gamma)\psi(s_i\gamma)\neq 0} |\psi(s_i\gamma) -\psi(\gamma)|^2\mu_i(s_i)\\
&=&|V|  \ell^{-1}\sum_{i=1}^{\ell'} \prod_{j\neq i} \|\psi_j\|_2^2\sum_{ \psi_i(\gamma_i)\psi_i(s_i\gamma_i)\neq 0}   |\psi_i(s_i\gamma_i) -\psi_i(\gamma)|^2\mu_i(s_i)
\end{eqnarray*}
where the last equality  holds because of (\ref{crux})-(\ref{crux'}). We also have
\begin{eqnarray*}\lefteqn{
\sum_{\psi(\gamma)\neq 0; \psi (s\gamma)=0} |\psi(\gamma)|^2\mu(s)}&&\\
&=&\ell^{-1}\sum_{i=1}^\ell \sum_{ \psi(\gamma)\neq 0 ; \psi(s_i\gamma)= 0} |\psi(\gamma)|^2\mu_i(s_i)\\
&=&|V| \ell^{-1}\sum_{i=1}^{\ell'} \prod_{j\neq i} \|\psi_j\|_2^2\sum_{ \psi_i(\gamma_i)\neq 0; \psi_i(s_i\gamma_i)= 0}   |\psi_i(\gamma)|^2\mu_i(s_i)\end{eqnarray*}
because of (\ref{crux})-(\ref{crux'}).   It follows that
$$\mathcal E_q(\psi,\psi)= |V|\prod_1^\ell \|\psi_i\|_2^2\; \ell^{-1}\sum_{i=1}^{\ell'} \frac{\mathcal E_{\mu_i}(\psi_i,\psi_i)}{\|\psi_i\|_2^2}$$
and
$$\mathcal E_q(\psi,\psi) \le (1+\epsilon) s |V| \,\prod_1^\ell \|\psi_i\|_2^2.$$
After letting $\epsilon$ got to zero, this becomes
$$\frac{\mathcal E_g(\psi,\psi)}{\|\psi\|_2^2} \le s.$$
Because the support of $\psi$  as cardinality at most 
$$\sum_1^{\ell'}|U_i|\times \left( \sum_1^{\ell'}|U_i|+\sum_{\ell'<i\le \ell}|\Gamma_i |\right)!$$ 
and $|U_i|\le \Lambda_{2,i}^{-1}(s)
$, $1\le i\le \ell'$, the desired bound on $\Lambda_{2,\Gamma,q}(v)$ follows.
\end{proof}

\subsection{$L^1$-isoperimetric profile lower bound} \label{sec-beta23}
In this section we consider the basic example obtained by gluing at one point (the neutral element) 
the Cayley graph $(G,S)$ of a finitely generated group $G$ with finite generating set labelled with the alphabet $\mathbf S=\{\mathbf s_1,\dots,\mathbf s_k\}$  and a small finite cycle group $\langle \beta \rangle$ of order $b \in \{2,3\}$. This corresponds to the gluing of two labelled graphs 
$(X_i,E_i,m_i)$ where 
$$X_1=G, \;\;E_1=\{(g, s^{ \pm 1}_i \cdot g ,\mathbf s^{\pm 1}_i): g \in G, i\in \{1,\dots, k\}\}$$ and $m_1((g, s_i^{\pm 1} \cdot g,\mathbf s_i^{\pm 1}))=\mathbf s_i^{\pm 1},$ 
and $$X_2=\mathbb Z/b\mathbb Z, \;\; E_2=\{(x,  \beta^{\pm 1}\cdot x, \boldsymbol \beta^{\pm 1} ): x \in \mathbb Z/b\mathbb Z \}$$ and $m_2((x, \beta ^{\pm 1}\cdot x, \boldsymbol\beta^{\pm}))=\boldsymbol \beta^{\pm 1}$. 
Let $X= \{\beta,\beta^{-1}\}\cup G$ (note that the set notation makes this correct for both $b=2$, in which case $\beta^{-1}=\beta$, and $b=3$). Let 
$\Gamma=\langle \beta,s_1,\dots, s_k\rangle \leq \mathbb S(X)$ be corresponding group.  When necessary, we will use the more explicit notation
$$\Gamma(\beta,G)=\Gamma$$
to describe this abstract construction based on a given group $G$ and  a cyclic group  $<\beta>$ of order $2$ or $3$.
By definition any element $g\in G\leq  \Gamma$ acts on  $G=X\setminus\{\beta^{\pm 1}\}$ by translation on the left,  $(g , g')\ra g g'$ and leaves invariant $\{\beta^{\pm 1}\}$.  The generator $\beta$ acts trivially on $G\setminus \{\id\}$  and rotates cyclically the distinct elements of $\{\id, \beta,\beta^{-1}\}$. 

By  Section \ref{sec-com}, it is clear that $\Gamma= G\ltimes P_0$ where $P_0$ is either $\mathbb S_0(X)$ or $\mathbb A_0(X)$ depending on whether $b=2$ or $3$.

\begin{theorem}  \label{th-beta23}
Referring to the setting described above, let $\mu$ be a symmetric probability measure on $G$ and $\nu$ be the uniform measure on $\langle \beta \rangle$.  There are universal constants $c_1,c_2>0$  (independent of $G$ and $\mu$ and $b=2,3$) such that the symmetric probability $q=\frac{1}{2}(\mu+\nu)$ on $\Gamma$ satisfies
$$\forall\; v,s>0,\;\;\Lambda_{1,\Gamma,q}(v)\ge c_1 s  \mbox{ for all } v\le (c_2 \Lambda^{-1}_{1,G,\mu}(s))!.$$
This holds with $c_1=1/00$ and $c_2=1/32$.  In particular,
$$\Lambda_{1,\Gamma,q} (v)\gtrsim \Lambda_{1,G,\mu} \left(\frac{\log(1+ v)}{\log(1+\log(1+ v))}\right).$$ 
\end{theorem}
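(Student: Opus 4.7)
The plan is to argue by contrapositive: assume $\Omega\subset\Gamma$ satisfies $q(\partial\Omega)<c_1 s|\Omega|$, and deduce $|\Omega|>(c_2 n)!$, where $n=\Lambda^{-1}_{1,G,\mu}(s)$. Throughout I rely on the set-theoretic decomposition $\Gamma=G\cdot P_0$ from Lemma~\ref{lem-rep}, writing each $\gamma\in\Gamma$ uniquely as $\gamma=g\tau$ with $g\in G$ and $\tau\in P_0$. Left multiplication by $s\in G$ acts by $s(g\tau)=(sg)\tau$, while a direct computation in $\mathbb{S}(X)$ gives $\beta(g\tau)=g\cdot\bigl((g^{-1},\beta,\beta^{-1})\tau\bigr)$ when $b=3$ (and analogously with the transposition $(g^{-1},\beta)$ when $b=2$): the $\mu$-part of the walk alters only the $G$-coordinate, while the $\beta$-part alters only the $P_0$-coordinate, by a left translation that depends on the current $G$-coordinate.

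The first step handles the $\mu$-part of the boundary through a section analysis. For $\tau\in P_0$, set $\Omega^\tau:=\{g\in G:g\tau\in\Omega\}$, so that $|\Omega|=\sum_\tau|\Omega^\tau|$ and $\mu(\partial_\mu\Omega)=\sum_\tau\mu(\partial\Omega^\tau)$, where the inner boundaries are computed in $G$ with respect to $\mu$. By definition of $n$, any $V\subseteq G$ with $|V|\le n$ satisfies $\mu(\partial V)\ge s|V|$. Summing over the small sections and using $\mu(\partial_\mu\Omega)\le 2q(\partial\Omega)\le 2c_1 s|\Omega|$ shows that sections with $|\Omega^\tau|>n$ account for at least a $(1-2c_1)$-fraction of the total mass; provided $c_1$ is small enough this lets me fix a particular $\tau_\ast$ with $|U|>n$, where $U:=\Omega^{\tau_\ast}$.

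The second step uses the $\nu$-part of the boundary to amplify this single large section into a large orbit of permutations. A conjugation trick---a $\mu$-step, a $\beta$-step, then the reverse $\mu$-step---realizes, in the fiber over any fixed base $g_0\in G$, left multiplication of the permutation coordinate by the 3-cycle $(g^{-1},\beta,\beta^{-1})$ for any intermediate point $g$ one can reach from $g_0$ by a $\mu$-step. Iterating, any product $\prod_i(g_i^{-1},\beta,\beta^{-1})^{\epsilon_i}$ with $g_i\in U$ is expressible as a word of length $O(|U|\cdot\mathrm{diam}(U))$ in the generators of $q$, starting and ending over $g_0$. A classical fact about symmetric groups identifies the subgroup $H_U\le P_0$ generated by these 3-cycles (respectively the transpositions $(g^{-1},\beta)$ for $b=2$) with the alternating group $A_{U^{-1}\cup\{\beta,\beta^{-1}\}}$ (respectively $\mathbb{S}_{U^{-1}\cup\{\beta\}}$), of order at least $(|U|+1)!/2\ge(n+1)!/2$.

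The technical heart is to convert this reachability statement into an honest lower bound on $|\Omega|$: I must show that a large fraction of the orbit $\{g_0\sigma\tau_\ast:\sigma\in H_U\}$ actually lies in $\Omega$. Because each $\sigma$ is expressed by a bounded-length product in the generators above, and each edge exiting $\Omega$ is charged to $q(\partial\Omega)<c_1 s|\Omega|$, a flow- or path-counting argument---routing the $|H_U|$ group elements through such paths and pigeonholing over boundary edges---shows that at least half of these orbit elements remain in $\Omega$ when $c_1$ is chosen small enough relative to $s$. This yields $|\Omega|\ge|H_U|/2\ge(n+1)!/4\ge(c_2 n)!$, and careful bookkeeping of the constants produces the explicit values $c_1=1/100$ and $c_2=1/32$. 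The main obstacle is precisely this flow-path accounting: one must control the cumulative $q$-boundary penalty along the long words realizing a generic $\sigma\in H_U$ with sharp constants, and do so uniformly in both cases $b\in\{2,3\}$.
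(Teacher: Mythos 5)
Your first step (the section analysis producing a single fibre $U=\Omega^{\tau_\ast}$ with $|U|>n$) is fine, but the step you yourself call the technical heart has a genuine gap, and it is not a matter of bookkeeping. Knowing that every $\sigma\in H_U$ is reachable from $g_0\tau_\ast$ by a word of length $O(|U|\cdot\mathrm{diam}(U))$ in the generators does not let you conclude that half of the orbit $\{g_0\sigma\tau_\ast\}$ lies in $\Omega$. The hypothesis $q(\partial\Omega)\le c_1 s|\Omega|$ only controls averaged quantities such as $|\Omega\setminus z\Omega|\lesssim |z|\,q(\partial\Omega)$ for a group element $z$ of word length $|z|$; it says nothing about the trajectory of the single point $g_0\tau_\ast$, and pigeonholing over boundary edges fails because factorially many of your $|H_U|\approx n!$ paths can exit through the same boundary edge --- there is no congestion bound, and producing one is essentially equivalent to the statement being proved. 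Even the averaged transport estimate is vacuous at this scale: your realizing words have length about $|U|\,\mathrm{diam}(U)\ge n^{2}$ (already for $G=\mathbb Z$, where $n\simeq 1/s$), so the cumulative penalty $|z|\,q(\partial\Omega)\lesssim c_1 s\, n^{2}|\Omega|$ far exceeds $|\Omega|$. A further structural problem is that your first step discards the $\nu$-part of the boundary, whereas the theorem hinges on the interplay of the two parts: a single large fibre over $\tau_\ast$ gives no control on whether the $\beta$-moves needed to implement your $3$-cycles stay inside $\Omega$.

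The paper's proof (following Erschler's wreath-product isoperimetric argument) avoids any path accounting. It calls $\tau\in K(U)$ $a$-satisfactory if there are at least $a$ points $g$ with $(g,\tau)\in U$ and $\beta^{\pm1}(g,\tau)\in U$ --- this is exactly where the $\mu$-part and the $\nu$-part of $q$ interact (Lemma \ref{lem-E1}); it then forms the graph $(K(U),EK(U))$ on the permutation coordinates occurring in $U$, prunes it by the edge-removal Lemma \ref{lem-E2} to a nonempty subgraph of minimal degree at least $a/4$, and an induction on the degree (Lemma \ref{lem-E3}, whose disjointness of components comes from the fact that permutations in different components send a marked point $x_0$ to distinct elements $g_i^{-1}$) shows this subgraph has at least $\lfloor a/8\rfloor!$ vertices; since $|U|\ge |K(U)|$, the factorial lower bound follows. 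To rescue your outline you would have to replace the flow argument by a direct combinatorial count of the distinct permutation coordinates present in $\Omega$, which is precisely what this satisfactory-vertex and edge-removal machinery accomplishes.
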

The proof of this theorem given below follows closely the argument developed by Anna Erschler to prove her wreath product isoperimetric inequality in \cite{Erschleriso}.  Since $\Gamma= G\ltimes P_0$, we write any element $\gamma\in \Gamma$ as a pair $(g_\gamma,\tau_\gamma)$ where $g_\gamma\in G$ and $\tau_\gamma\in P_0$ so that $\gamma= g_\gamma \tau_\gamma$. The element $g_\gamma$ captures the action of $\gamma$ on $G$ at infinity which is by translation. The element $\tau_\gamma$ is a permutation of $X$ with finite support. Note that for any $s\in G$ and $\gamma=g_\gamma \tau_\gamma\in \Gamma$, we have $s\gamma= (sg_\gamma) \tau_\gamma$, that is, $g_{s\gamma}=sg_\gamma$ and $\tau_{s\gamma}=\tau_\gamma$. Also
$\beta^{\pm1} \gamma= g_\gamma \tau_{\beta^{\pm 1}\gamma}$ with $\tau_{\beta^{\pm1}\gamma}= g^{-1}_\gamma \beta^{\pm 1}g_\gamma \tau_\gamma$.

\begin{definition}  \label{def-KU}
Given a finite subset $U$ of $\Gamma$, set
$$K(U)=\{\tau\in P_0: \tau=\tau_\gamma \mbox{ for some } \gamma\in U\}$$
and let  $EK(U)$ be the set of pairs $\{\tau,\tau'\}\subset K(U)$, $\tau\neq \tau' $, such that  there exists $g\in G$ and  $\epsilon\in \{\pm 1\}$
for which  $(g,\tau)\in U$ and  $(g,\tau')=\beta^{\epsilon }(g,\tau)\in U$ (note that this is indeed a property of the pair $\{\tau,\tau'\}\subset K(U)$).
An element $\tau\in K(U)$ is $a$-satisfactory if
$$\#\{g\in G: (g,\tau)\in U \mbox{ and at least one of }  \beta (g,\tau), \beta^{-1}(g,\tau)\in U\}\ge a.$$
\end{definition}
In words, given the set $U$, an element $\tau\in K(U)$ is $a$-satisfactory is there is at least $a$ locations $g_1,\dots, g_a$  such that, for 
each $i=\{1,2,\dots,a\}$, $(g_i,\tau)\in U$ and $\beta^{\epsilon_i}(g_i,\tau) \in U$ for at least one $\epsilon_i\in \{\pm 1\}$. 

Recall that , by definition, for any finite set $U$,  $q(\partial U)$ is given by
\begin{equation}\label{qboundary}
q(\partial  U)= \frac{1}{2}\sum_{\gamma,s\in \Gamma}| \mathbf 1_U (\gamma)-\mathbf 1_U(s\gamma)|q(s)=
\sum_{\gamma,s\in \Gamma} \mathbf 1_U(\gamma)\mathbf 1_{X\setminus U}(s\gamma) q(s) .
\end{equation}
\begin{lemma}[{Compare \cite[Lemma 2]{Erschleriso}}]  \label{lem-E1}
Let $s\in (0,\epsilon /16)]$, $\epsilon\in (0,1)$.  Assume that  the finite set $U\subset \Gamma$ is such that $q(\partial U) \le s|U|$.   Then we have
$$\#\left\{\gamma\in U: \tau_\gamma \mbox{ is } \frac{1}{2}\Lambda_{1,G,\mu}^{-1}( 4 \epsilon^{-1} s) \mbox{-satisfactory } \right\}\ge 
\left(1-\epsilon\right) |U|.$$ 
\end{lemma}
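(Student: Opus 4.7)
The plan is to adapt the fibration argument used by Erschler in the wreath-product setting \cite{Erschleriso} to the semidirect structure $\Gamma = G \ltimes P_0$ at hand. Given a finite set $U \subset \Gamma$, I slice it over the fibers of the projection $\gamma \mapsto \tau_\gamma$ by setting $U_\tau = \{g \in G : (g,\tau) \in U\}$, so that $|U| = \sum_\tau |U_\tau|$. The two pieces of $q = \tfrac{1}{2}(\mu + \nu)$ act in complementary ways: left multiplication by $s \in G$ sends $(g,\tau)$ to $(sg,\tau)$ (horizontal, preserves $\tau_\gamma$), whereas left multiplication by $\beta^{\pm 1}$ preserves $g_\gamma$ and only modifies $\tau_\gamma$ (vertical). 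Accordingly, the hypothesis $q(\partial U) \le s|U|$ decomposes into a horizontal bound $\mu(\partial_\mu U) \le 2s|U|$ and a vertical bound $\nu(\partial_\nu U) \le 2s|U|$.

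For the horizontal part, the fact that $s \in G$ preserves the $\tau$-coordinate gives $\mu(\partial_\mu U) = \sum_\tau \mu(\partial_G U_\tau)$, and the definition of the $L^1$-isoperimetric profile yields $\mu(\partial_G U_\tau) \ge \Lambda_{1,G,\mu}(|U_\tau|) \, |U_\tau|$ for each fiber. For the vertical part, if $\tau$ fails to be $a$-satisfactory, then by definition the complement in $U_\tau$ contains at least $|U_\tau| - a$ elements $g$ with \emph{both} $\beta(g,\tau) \notin U$ and $\beta^{-1}(g,\tau) \notin U$. Each such $g$ contributes at least $\tfrac{1}{2}$ to $\nu(\partial_\nu U)$, uniformly in $b \in \{2,3\}$, since $\nu(\beta) = \tfrac{1}{2}$ when $b=2$ and $\nu(\beta) + \nu(\beta^{-1}) = \tfrac{2}{3}$ when $b=3$.

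With the choice $a = \tfrac{1}{2}\Lambda^{-1}_{1,G,\mu}(4\epsilon^{-1} s)$, I split the set $B$ of bad (non-$a$-satisfactory) $\tau$ into \emph{small} slices with $|U_\tau| < 2a$ and \emph{large} slices with $|U_\tau| \ge 2a$. For a small bad $\tau$, the definition of the right-continuous inverse $\Lambda_{1,G,\mu}^{-1}$ (applied at $4\epsilon^{-1}s$) forces $\Lambda_{1,G,\mu}(|U_\tau|) > 4\epsilon^{-1} s$, so summing the horizontal bound over small bad fibers yields
\[
\sum_{\tau \in B_{\mathrm{small}}} |U_\tau| \;\le\; \frac{\mu(\partial_\mu U)}{4\epsilon^{-1} s} \;\le\; \frac{2s|U|}{4\epsilon^{-1} s} \;=\; \frac{\epsilon |U|}{2}.
\]
For a large bad $\tau$, the inequality $|U_\tau| - a \ge |U_\tau|/2$ combined with the per-element $\tfrac{1}{2}$ contribution gives $\nu(\partial_\nu U) \ge \tfrac{1}{4} \sum_{B_{\mathrm{large}}} |U_\tau|$, so using $\nu(\partial_\nu U) \le 2s|U|$ and the hypothesis $s \le \epsilon/16$,
\[
\sum_{\tau \in B_{\mathrm{large}}} |U_\tau| \;\le\; 8s|U| \;\le\; \frac{\epsilon |U|}{2}.
\]
Summing gives $\sum_{\tau \in B} |U_\tau| \le \epsilon |U|$, which is the desired conclusion.

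I do not foresee any real obstacle: the proof is essentially a two-directional boundary accounting on the fibers of $\Gamma \to P_0$, and the case split by fiber size is forced by the shape of the assertion (the "small slices" are controlled by the $G$-isoperimetric profile and the "large slices" by the $\langle\beta\rangle$ direction). The only delicate point is verifying the uniform $\tfrac{1}{2}$ per-element contribution to $\nu(\partial_\nu U)$ for both $b=2$ and $b=3$, and ensuring that the final numerical constants ($c_1 = 1/100$, $c_2 = 1/32$ as claimed in Theorem~\ref{th-beta23}) come out cleanly from the bookkeeping above.
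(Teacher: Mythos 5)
Your argument is correct, and it reaches the lemma by a genuinely different bookkeeping than the paper's. The paper follows Erschler's original scheme by contradiction: it first bounds the number of ``bad'' elements $\gamma\in U$ (those with both $\beta\gamma,\beta^{-1}\gamma\notin U$) by $4s|U|$ using $q(\{\beta,\beta^{-1}\})\ge 1/4$, then assumes the non-satisfactory set $\mathcal N$ has $|\mathcal N|>\epsilon|U|$, and selects the family $\mathcal C$ of fibers in which good elements make up at least half of $\mathcal N(\tau)$; on such a fiber the good-element count, which is $<a$ by non-satisfactoriness, forces $|U_\tau|<\Lambda^{-1}_{1,G,\mu}(4\epsilon^{-1}s)$, so the $G$-profile yields a per-fiber $\mu$-boundary at least $4\epsilon^{-1}s\,|U_\tau|$, and summing over $\mathcal C$ contradicts $q(\partial U)\le s|U|$ (the hypothesis $s\le\epsilon/16$ enters there to make $|\mathcal N_{\mathrm{bad}}|\le\frac14|\mathcal N|$). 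You instead give a direct accounting: split $q(\partial U)$ into its horizontal and vertical parts, each at most $2s|U|$, and dichotomize the non-satisfactory fibers by size relative to $2a$. Small fibers are charged to the horizontal part exactly as in the paper (via $\Lambda_{1,G,\mu}(|U_\tau|)>4\epsilon^{-1}s$), while large fibers are charged to the vertical part, since at least half of each such fiber consists of elements whose $\beta^{\pm1}$-translates both leave $U$, each contributing at least $\tfrac12$ to $\nu(\partial_\nu U)$ for both $b=2$ and $b=3$; the hypothesis $s\le\epsilon/16$ enters in your $8s|U|\le\frac{\epsilon}{2}|U|$ step. Both proofs rest on the same two mechanisms (the fiberwise $L^1$-profile of $(G,\mu)$ and the definite $\nu$-cost of elements isolated in the $\beta$-direction), but your size dichotomy replaces the paper's contradiction setup and auxiliary family $\mathcal C$, which makes the argument a bit shorter and the constants more transparent; the paper's version stays closer to Erschler's Lemma 2 and avoids splitting the boundary into the two directions at the outset. (Only incidental remark: the constants $c_1,c_2$ you mention belong to Theorem \ref{th-beta23}, not to this lemma, so nothing in your proof needs to produce them.)
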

\begin{proof} Say that an element $\gamma\in U$ is bad if neither $\beta \gamma $ nor $\beta^{-1} \gamma$ is in $U$.  Say $\gamma$ is good if it is not bad.
If $\gamma$ is bad then both $(\gamma,\beta^{\pm 1} \gamma)$ are on the boundary of $U$ and, since $q(\partial U)\le s|U|$ and $q(\{\beta,\beta^{-1}\})\ge 1/4$, we must have
$$\#\{\gamma\in U: \gamma \mbox{ is bad}\}\le  4\, s\, |U| .$$

Let $\mathcal N$ be the set of all $\gamma=(g,\tau)\in U$  such that $\tau$ is non-satisfactory at the level $a= 
\frac{1}{2}\Lambda_{1,G,\mu}^{-1}( 4 \epsilon^{-1} s)$, that is, 
$$\mathcal N= \left\{ \gamma\in U: \tau_\gamma \mbox{ is not } \frac{1}{2}\Lambda_{1,G,\mu}^{-1} ( 4 \epsilon^{-1} s)\mbox{-satisfactory}\right\}.$$
Write $\mathcal N$ as the disjoint union $\mathcal N=\mathcal N_{\mbox{\tiny bad}}\cup \mathcal N_{\mbox{\tiny good}}$.
Suppose the desired conclusion does not hold, that is, 
$$|\mathcal N|> \epsilon |U|.$$
 Since $|\mathcal N_{\mbox{\tiny bad}}|\le  4 s |U|$, we must have
$|\mathcal N_{\mbox{\tiny bad}}|< 6 \epsilon^{-1} \, s\, |\mathcal N|$.  For $\tau \in \mathcal P_0$, write
\begin{eqnarray*}
\mathcal N (\tau)&=&\{(g,\sigma)\in \mathcal N: \sigma=\tau\}\\
\mathcal N_{\bullet}(\tau)&=& \{(g,\sigma)\in  \mathcal N_{\bullet}: \sigma=\tau\},\;\bullet=\mbox{ good or  bad}. 
\end{eqnarray*}
Note that  
$$\mathcal N_\bullet= \bigcup_{\tau\in P_0} \mathcal N_\bullet(\tau)$$
and let  $\mathcal C$ be the set of all permutations in $P_0$ such that
$$ |\mathcal N(\tau) | \le  2|\mathcal N_{\mbox{\tiny good}}(\tau)|    \;\;(\mbox{i.e., } |\mathcal N_{\mbox{\tiny bad}}(\tau)|\le |\mathcal N_{\mbox{\tiny good}}(\tau)| )$$
Observe that
\begin{eqnarray*}|\mathcal N| &= & \sum_{\tau \in \mathcal C}  |\mathcal N (\tau)|+\sum_{\tau \not\in \mathcal C}  |\mathcal N (\tau)|
\\
&\le & \sum_{\tau \in \mathcal C}  |\mathcal N(\tau)|+2 \sum_{\tau \not\in \mathcal C}  |\mathcal N_{\mbox{\tiny bad}}(\tau)| 
\\
&\le &  \sum_{\tau \in \mathcal C}  |\mathcal N(\tau)|+2 |\mathcal N_{\mbox{\tiny bad}}|  
\end{eqnarray*}
Since $|\mathcal N_{\mbox{\tiny bad}}| <  4 \epsilon^{-1} s |\mathcal N| \le  \frac{1}{4} |\mathcal N|  $,  it follows that
\begin{equation} \label{NC}
|\mathcal N| \le
2 \sum_{\tau\in \mathcal C}|\mathcal N(\tau)|.\end{equation}
We now estimate from below the size of the boundary of $U$. For this purpose, set $\overline{\mathcal N}(\tau)=\{g\in G: (g,\tau)\in \mathcal N(\tau)\}$
and define $\overline{ \mathcal N}_\bullet(\tau)$ in the same fashion for $\bullet= \mbox{ good or  bad}$ . Obviously
$|\overline{\mathcal N}_\bullet(\tau)|=|\mathcal N_{\bullet}(\tau)|$ where $\bullet$ is blank, good or bad.  Since $q=\frac{1}{2}(\nu+\mu)$ with $\mu$ supported on $G$, we have (see (\ref{qboundary}))
\begin{eqnarray}
2q(\partial U)&\ge & \sum_{\gamma\in U, s\in G} \mathbf 1_{\Gamma \setminus U}(s\gamma) \mu(s)
\ge   \sum_{\gamma\in \mathcal N, s\in G} \mathbf 1_{\Gamma \setminus U}(s\gamma)\mu(s) \nonumber \\
&=& \sum_{\tau\in P_0} \sum _{g\in \overline{\mathcal N}(\tau), s\in G} \mathbf 1_{G\setminus \overline{\mathcal N}(\tau)} (sg) \mu(s)\nonumber \\
&\ge  &\sum_{\tau\in \mathcal C} \mu (\partial \overline{\mathcal N}(\tau)). \label{boundary}
\end{eqnarray}
If $g\in \overline{\mathcal N}_{\mbox{\tiny good}}(\tau)$ then
at least one of  $\beta^{\pm 1}(g,\tau)$ is in $U$ (see the definition or bad/good) and  $\tau$ is not $\frac{1}{2}\Lambda_{1,G,\mu}( 4\epsilon^{-1} s)$-satisfactory. 
Hence, we must have 
$$|\overline{\mathcal N}_{\mbox{\tiny good}}(\tau)| < \frac{1}{2} \Lambda_{1,G,\mu}^{-1}(4 \epsilon^{-1} s).$$
When $\tau\in \mathcal C$, it follows that the set $\overline{\mathcal N}(\tau) \subset G$ has size bounded by 
$$|\overline{\mathcal N}(\tau)|\le 2 |\overline{\mathcal N}_{\mbox{\tiny good}}(\tau)| < \Lambda_{1,G,\mu}^{-1}( 4 \epsilon^{-1} s ).$$
This implies
$$ \mu (\partial \overline{\mathcal N}(\tau)) \ge  4 \epsilon^{-1}  s   | 
\overline{\mathcal N}(\tau)|. $$
Using this inequality  in (\ref{boundary}),  it follows that
$$
q(\partial U) \ge  2 \epsilon^{-1} s  \sum_{\tau\in \mathcal C}  | \overline{\mathcal N}(\tau)| \ge    \epsilon^{-1} s  |\mathcal N| 
>    \, s \, |U|.
$$
where the last  inequality follows from the assumption that $|\mathcal N|>\epsilon |U|$. This contradicts the main hypothesis. Hence it must be the case that
 $|\mathcal N| \le \epsilon |U|$, that is,
  $$\#\left\{\gamma\in U: \tau_\gamma \mbox{ is $\frac{1}{2}$}\Lambda_{1,G,\mu}^{-1}( 4 \epsilon^{-1} s) \mbox{-satisfactory } \right\}\ge 
\left(1-\epsilon\right) |U|.$$ 
\end{proof} 

The next lemma is a version of the {\em edge removal lemma} of A. Erschler \cite[Lemma 1]{Erschleriso}. We need to generalize the notion of $a$-satisfactory vertex.  Given the graph $(K(U),EK(U))$  (recall that $K(U)\subset P_0$ is a finite subset of permutations), consider a subgraph $(K',E')$ of $(K(U),EK(U))$.
A vertex $\tau\in K'$ is $a$-satisfactory in $(K',E')$ is there are at least $a$ distinct elements $g\in G$ such that  $\gamma= (g,\tau) \in U$ and at least one of
$(\tau,\tau_{g,+}),(\tau,\tau_{g,-})\in E'$ where $\tau_{g,+},\tau_{g,-} $ are defined  by  
$\beta^{\pm 1}(g,\tau) =(g,\tau_{g,\pm})$, that is
$$ \tau_{g,\pm}= g^{-1} \beta^{\pm 1}g \tau . $$
Note that  $\tau_{g, \epsilon}=\tau_{g',\eta}$ if and only if $g=g'$ and $\epsilon=\eta$. Thus, if $\tau$ is $a$-satisfactory,  there are at least $a$ 
distinct edges adjacent to $\tau$ in $(K',E')$.  If $\tau$ is not $a$-satisfactory in $(K',E')$ then there are less than $2a$ edges adjacent to $\tau$ in $(K',E')$. 

We say that an edge $\{\tau,\tau'\}\in E'$ is $a$-satisfactory if  both of its ends, $\tau,\tau'$ are $a$-satisfactory.  Let $\mbox{NS}(K',E',a)$ be the set of all non-$a$-satisfactory edges for $(K',E')$.   

\begin{lemma}  \label{lem-E2}
Assume that 
$$\frac{|\mbox{\em NS}(K(U),EK(U),a)|}{|EK(U)|}\le \frac{1}{4}.$$
Then there exists a subgraph $(K',E')$, $K'\neq \emptyset$, all of whose vertices are $(a/4)$-satisfactory.
\end{lemma}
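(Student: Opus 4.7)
The plan is an iterative peeling argument in the spirit of Erschler's edge-removal lemma. Set $(K_0,E_0):=(K(U),EK(U))$ and, inductively, take $(K_{i+1},E_{i+1})$ to be the subgraph of $(K_i,E_i)$ induced by removing every vertex that is not $(a/4)$-satisfactory in $(K_i,E_i)$. The sequence stabilizes at some $(K',E')$, every vertex of which is by construction $(a/4)$-satisfactory, so the only thing to prove is $K'\neq\emptyset$.

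Suppose for contradiction that $K'=\emptyset$. A vertex removed at stage $i$ has fewer than $a/4$ satisfactory witnesses in $(K_i,E_i)$ and hence strictly fewer than $a/2$ incident edges in $E_i$ (since each witness accounts for at most two edges). Charging each removed edge to whichever endpoint is peeled off first gives the clean bound
\[
|EK(U)|<(a/2)\,|K(U)|.
\]
To contradict this, let $S$ denote the $a$-satisfactory vertices of $(K_0,E_0)$ and $N:=K(U)\setminus S$. Every edge touching $N$ lies in $\mathrm{NS}$, so $|E[S]|=|EK(U)|-|\mathrm{NS}|\ge \tfrac{3}{4}|EK(U)|$; also $a|S|\le\sum_{\tau\in S}\deg(\tau)\le 2|EK(U)|$, forcing $|S|\le 2|EK(U)|/a$. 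These combine to the ratio bound $|E[S]|/|S|\ge 3a/8$, which, by the classical fact that any graph with $|E|/|V|>t$ contains a non-empty induced subgraph of minimum degree exceeding $t$, produces a non-empty subgraph of $(S,E[S])$ of minimum degree $>3a/8$.

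The main technical obstacle lies in bridging the residual factor-$4/3$ gap between this bound and the $(a/4)$-satisfactory threshold asserted in the lemma: minimum degree $>3a/8$ only guarantees $(3a/16)$-satisfactory via the inequality (witnesses)$\ge$(degree)$/2$. To close it I would use a refined accounting: for each $\tau\in S$ eventually peeled off, $\tau$ begins with at least $a$ satisfactory witnesses in $(K_0,E_0)$ and has fewer than $a/4$ at its removal, so it loses more than $3a/4$ witnesses during the process, each accounting for at least one incident edge of $\tau$ in $E_0$ whose other endpoint was removed strictly earlier. Since the map $(g,\epsilon)\mapsto\tau_{g,\epsilon}$ is injective, distinct lost pairs $(g,\epsilon)$ produce distinct such other endpoints. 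Double-counting these chronologically-ordered losses against the edges of $E_0$ — each edge being responsible for at most two such pairs — sharpens the ratio estimate sufficiently to contradict $|EK(U)|<(a/2)|K(U)|$ and thereby forces $K'\neq\emptyset$. This is essentially Erschler's combinatorial manipulation from \cite{Erschleriso}, adapted to the present setup, and it is the technical heart of the argument.
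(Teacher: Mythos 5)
Your peeling setup and the contradiction framing coincide with the paper's, and your ``refined accounting'' contains the right key idea (counting lost witnesses of initially $a$-satisfactory vertices), but the step you yourself call the technical heart is left unexecuted and, as stated, is aimed at a target that cannot be hit. The inequality $|EK(U)|<(a/2)|K(U)|$ cannot be contradicted from the hypothesis: the hypothesis controls only the \emph{fraction of bad edges} and says nothing about how many vertices of $K(U)$ are isolated or of tiny degree. Elements $(g,\tau)\in U$ with no $\beta^{\pm1}$-neighbour in $U$ inflate $|K(U)|$ without changing $|EK(U)|$ or $|\mathrm{NS}|$, so $|EK(U)|<(a/2)|K(U)|$ can hold even when the conclusion of the lemma is true; any finish must therefore be closed against quantities attached to the set $S$ of initially $a$-satisfactory vertices, not against $|K(U)|$. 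Your static detour ($|E[S]|/|S|\ge 3a/8$ plus the minimum-degree-subgraph fact) is, as you concede, a dead end: even optimizing that count using that all edges leaving $S$ lie in $\mathrm{NS}$ only gives $|E[S]|/|S|\ge 3a/7<a/2$, so it cannot reach the $(a/4)$ threshold. Finally, the bookkeeping ``each edge responsible for at most two such pairs'' is too lossy: with a factor $2$ the constants do not close.

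Here is how your ingredients do assemble into a proof, and it is essentially the paper's double count (there implemented via edge orientations and the split over $\mathcal N_a$), restricted to $S$. Assume the peeling empties $K(U)$. (i) For each $\tau\in S$, more than $3a/4$ witnesses are lost by its removal time; each lost witness $g$ supplies an edge $\{\tau,\tau_{g,\epsilon}\}\in EK(U)$ whose other endpoint was removed \emph{strictly} earlier, and these edges are distinct for distinct $g$ by injectivity. Crucially, a given edge can be claimed this way by at most \emph{one} of its endpoints (only the strictly later-removed one), so summing over $S$ gives $|EK(U)|>\tfrac{3a}{4}|S|$. (ii) Every edge with both endpoints in $S$ is still present at the start of the round in which its earlier-removed endpoint (a vertex of $S$) is peeled, and a peeled vertex has fewer than $2\cdot(a/4)=a/2$ incident edges at that moment; since $|E[S]|\ge |EK(U)|-|\mathrm{NS}|\ge\tfrac34|EK(U)|$, this yields $\tfrac34|EK(U)|<\tfrac{a}{2}|S|$, i.e.\ $|S|>\tfrac{3}{2a}|EK(U)|$. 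Bounds (i) and (ii) give $|EK(U)|>\tfrac98|EK(U)|$, a contradiction, so the peeling stabilizes at a nonempty $(K',E')$ whose vertices are all $(a/4)$-satisfactory. So: drop the global bound $|EK(U)|<(a/2)|K(U)|$, charge each lost-witness edge to one endpoint only, and close the count against the $S$-restricted ``edges remaining at removal'' bound — with these corrections your sketch becomes a complete argument, parallel to the paper's.
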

\begin{proof} Set  $K_0=K(U)$, $E_0=EK(U)$. Consider the vertices in $(K_0,E_0)$,  which are not $(a/4)$-satisfactory in $(K_0,E_0)$. 
Remove these vertices and all their adjacent edges to obtain $(K_1,E_1)$. If some of the vertices in $(K_1,E_1)$ are not $(a/4)$-satisfactory in $(K_1,E_1)$, remove them and all adjacent edges and repeat.  labelled each vertex $\tau$ with the time $i_\tau$ of its removal and orient each of the edges removed towards the vertex that remains after the removal of the edge (if both ends of the edge are removed at the same time, orient the edge arbitrarily).  
Let $\mathcal R^i=\{\tau\in K_0: i_\tau=i\}$ be the set of all vertices removed  at time $i$. By definition, such a vertex is in $K_{i-1}$ but not in $K_i$.  
For each vertex $\tau\in K_0$,  record the two sequences of numbers
$$  a_{\tau,j},\;j\le i_\tau \mbox{ and }\; b_{\tau,j},\;j \ge i_\tau$$
where $a_{\tau_j}$ is the number of oriented edges $(\tau',\tau)$ removed at time $j\le i_\tau$ 
and $b_{\tau_j}$ is the number of oriented edges $(\tau,\tau')$ removed at time $j\ge i_\tau$ (in both cases, $j=i_{\tau'}$). 

By definition we have that the total number $T$ of removed edges in the
whole process is 
\[
T=\sum_{i=1}^{\infty}\sum_{\tau\in\mathcal{R}^{i}}\sum_{1\le j\le i}a_{j,\tau}=\sum_{i=1}^{\infty}\sum_{\tau\in\mathcal{R}^{i}}\sum_{j\ge i}b_{\tau,j}.
\]
To show that the process must end with a non-empty graph, we argue by contradiction. Assume instead that the removal process ends with the empty graph (every vertex gets removed at some point). Since
every vertex gets removed in the end, we have that 
$$\sum_{1\le j\le i_\tau}a_{j,\tau}+\sum_{j\ge i_\tau}b_{\tau,j}$$
is exactly the degree of the vertex $\tau$ in $(K(U),EK(U))=(K_0,E_0)$. 

Write $\mathcal{N}_{a}=\mathcal{N}(K_0,E_0,a)$ for the set of vertices that
are non-$a$-satisfactory in $(K_0,E_0)$ and split the sum for $T$ into
\[
T=\sum_{i=1}^{\infty}\sum_{\tau\in\mathcal{R}^{i}\cap\mathcal{N}_{a}}\sum_{j\ge i}b_{\tau,j}+\sum_{i=1}^{\infty}\sum_{\tau\in\mathcal{R}^{i}\cap(K\setminus\mathcal{N}_{a})}\sum_{j\ge i}b_{\tau,j}.
\]
In the first summation, since $\tau$ is non-$a$-satisfactory, 
the sum is bounded by the total number of non-$a$-satisfactory edges
\[
\sum_{i=1}^{\infty}\sum_{\tau\in\mathcal{R}^{i}\cap\mathcal{N}_{a}}\sum_{j\ge i}b_{\tau,j}\le|NS(K_0,E_0,a)|.
\]
Now we bound the second sum. By definition, the vertices removed during the first round are non-$a/10$-satisfactory in $(K_0,E_0)$.  It follows that
the second sum actually starts from $i=2$.
From the edge removal procedure, $\tau\in\mathcal{R}^{i}\cap(K\setminus\mathcal{N}_{a})$,
$i\ge 2$, implies that  $\tau$ was $a$-satisfactory in $(K_0,E_0)$ but has becomes non-$a/4$-satisfactory in $(K_{i-1},E_{i-1})$ and gets removed in round $i=i_\tau$.
Therefore 
\[
\deg\tau=\sum_{1\le j\le i_\tau}a_{j,\tau}+\sum_{j\ge i_\tau}b_{\tau,j}\ge a\mbox{ and }\sum_{j\ge i_\tau}b_{\tau,j}\le 2a/4=a/2.
\]
It follows that for any $\tau \in\mathcal{R}^{i}\cap(K\setminus\mathcal{N}_{a})$,
\[
\sum_{1\le j\le i_\tau}a_{j,\tau}\ge  2 \sum_{j\ge i_\tau}b_{\tau,j}.
\]
Summing up, we have
\[
\sum_{i=1}^{\infty}\sum_{\tau\in\mathcal{R}^{i}\cap(K\setminus\mathcal{N}_{a})}\sum_{j\ge i}b_{\tau,j}\le\frac{1}{2}\sum_{i=2}^{\infty}\sum_{\tau\in\mathcal{R}^{i}\cap(K\setminus\mathcal{N}_{a})}\sum_{1\le j\le i}a_{j,\tau}\le\frac{1}{2}T.
\]
Combining the two estimates, it follows that 
\[
T\le |NS(K,E,a)|+\frac{1}{2}T
\]
and, because $|NS(K,E,a)|\le\frac{1}{4}|EK|$,
\[
T\le 2 |NS(K,E,a)|\le\frac{1}{2}|E(K)|.
\]
This contradicts the assumption that the process ends with the empty graph. 
\end{proof}
\begin{lemma}\label{lem-E3}  Fix $U\subset G$ and let $(K,E)$ be a subgraph of  $(K(U),EK(U))$ such that each vertex in $K$ has at least $2b$ distinct 
neighbors in $(K,E)$. Then
$$|K|\ge b!.$$
\end{lemma}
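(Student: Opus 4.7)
The plan is as follows. In the setting of this paper $b$ denotes the order of the cyclic group $\langle \beta \rangle$, so $b \in \{2,3\}$ and the bound $|K| \ge b!$ reads $|K| \ge 2$ or $|K| \ge 6$. The most direct route is to invoke the elementary graph-theoretic fact that a nonempty graph with minimum degree $d$ contains at least $d+1$ vertices: the hypothesis supplies minimum degree at least $2b$ in $(K,E)$, hence $|K|\ge 2b+1 \ge b!$ for $b \in \{2,3\}$.

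This elementary argument is wasteful of the structure of $(K(U),EK(U))$. A more refined approach---which would be indispensable if one wished to establish an analogous factorial lower bound $|K|\ge b!$ with $\beta$ of larger order $b$---exploits the fact that each edge of $(K(U),EK(U))$ corresponds to pre-multiplication by a conjugate $c = g^{-1} \beta^{\epsilon} g$, which is a $b$-cycle in $\mathbb{S}(X)$ whose support is $\{g^{-1}\} \cup \{\beta,\beta^{-1}\}$ (resp.\ $\{g^{-1}\}\cup\{\beta\}$ when $b=2$). Cycles indexed by different $g$ thus share all but one ``moving point'' in their support.

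At a vertex $\tau \in K$, the minimum-degree hypothesis produces at least $2b$ distinct neighbors, and since each $g \in G$ yields at most two distinct conjugates (one for each choice of $\epsilon$), one extracts $b$ distinct moving points $g_1^{-1},\dots,g_b^{-1}$ with associated conjugates $c_1,\dots,c_b$. The key structural observation is that the $b!$ products $c_{\sigma(1)} c_{\sigma(2)} \cdots c_{\sigma(b)}\tau$ over $\sigma \in S_b$ yield pairwise distinct elements of $\mathbb{P}_0$: because the cycles have overlapping supports but distinct moving points, reordering them produces distinct permutations when applied to $\tau$.

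The principal obstacle in this more general setting is then to verify that all these $b!$ products actually correspond to vertices of $K$, rather than merely to elements of the ambient group $\mathbb{P}_0$. This requires invoking the minimum-degree condition iteratively at each intermediate partial-product vertex along the relevant paths in $(K,E)$, which is delicate because the specific edges available at a vertex $\pi\tau$ are not directly controlled by the edges available at $\tau$. One would likely need a careful inductive scheme, or a pigeonhole argument on the collection of available edges, to propagate the min-degree condition into a statement about the orbit of $\tau$ under $\langle c_1,\dots,c_b\rangle$.
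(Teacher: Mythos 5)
There is a genuine gap, and it begins with the reading of $b$. In Lemma \ref{lem-E3} the letter $b$ is \emph{not} the order of $\langle\beta\rangle$ (which stays $2$ or $3$ throughout the section); it is a free integer parameter, and the lemma is invoked in the proof of Theorem \ref{th-beta23} with $2b$ equal to $a/4$, $a=\frac{1}{2}\Lambda^{-1}_{1,G,\mu}(s)$, i.e.\ with $b$ arbitrarily large. It is precisely the factorial $b!\approx\lfloor a/8\rfloor!$ that produces the bound $|U|\ge\lfloor\frac{1}{16}\Lambda^{-1}_{1,G,\mu}(s)\rfloor!$ there; the statement for $b\in\{2,3\}$ alone would be useless. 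Your elementary observation that minimum degree $2b$ forces $|K|\ge 2b+1$ therefore proves only a trivial special case and falls hopelessly short of $b!$ in the regime that is actually needed. Your ``refined approach'' points at the right structural feature (each edge is left multiplication by a conjugate $g^{-1}\beta^{\epsilon}g$, a cycle whose only moving point in $G$ is $g^{-1}$), but it stops exactly at the step that constitutes the proof: you do not show that the $b!$ products $c_{\sigma(1)}\cdots c_{\sigma(b)}\tau$ are vertices of $K$, and, as you yourself note, the edges available at an intermediate vertex are not controlled by those available at $\tau$. Pairwise distinctness of these products in $\mathbb{S}_0(X)$ is beside the point if they need not lie in $K$.

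The paper closes this gap by induction on $b$ rather than by enumerating orderings of a fixed set of cycles, and the inductive formulation (quantified over \emph{all} subgraphs of $(K(U),EK(U))$ with the degree property) is what makes the propagation you worried about automatic. Given $\tau_0\in K$ with at least $2b$ distinct neighbors, a pigeonhole on the sign yields $b$ distinct elements $g_1,\dots,g_b\in G$ and a common $\epsilon_0$ with $\tau_i=(g_i^{-1}\beta^{\epsilon_0}g_i)\tau_0\in K$. Setting $x_0=\tau_0^{-1}(\beta^{-\epsilon_0})$ one has $\tau_i(x_0)=g_i^{-1}$. Deleting from $(K,E)$ all edges labelled $g_i^{-1}\beta^{\pm1}g_i$ removes at most two neighbors at each vertex, so the resulting graph $\Delta_i$ has minimum degree at least $2(b-1)$; by the induction hypothesis the connected component $\mathcal{P}_i$ of $\tau_i$ in $\Delta_i$ has at least $(b-1)!$ vertices. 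Every surviving edge is multiplication by a cycle fixing $g_i^{-1}$, so each $\sigma\in\mathcal{P}_i$ satisfies $\sigma(x_0)=g_i^{-1}$; hence the $\mathcal{P}_i$ are pairwise disjoint and $|K|\ge b\,(b-1)!=b!$. If you want to salvage your outline, this invariant $\sigma\mapsto\sigma(x_0)$ together with the edge--deletion induction is the missing mechanism; the all-orderings counting by itself does not suffice.
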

\begin{proof} We proceed by induction on $b$.  The statement is obviously true for $b=1$.  Suppose it is true for $b=k-1$.  Let $\tau_0\in K$. By assumption there exists $k$ distinct elements $g_1,\dots, g_{k} \in G$ and $\epsilon_i\in \{\pm 1\}$ such that  
$$\tau_i= (g_i^{-1}\beta^{\epsilon_i}g_i)  \tau_0\in K \mbox{ and } \epsilon_1=\epsilon_2=\dots=\epsilon_k=\epsilon_0\in \{\pm 1\}.$$
(here we can assume that the $\epsilon_i$s are all the same because of the assumption 
that $\tau_0$ has $2k$ neighbors).

Let $x_{0}= \tau_0^{-1} (\beta^{-\epsilon_0})$. Then, by construction,   $\tau_i ( x_0)=g_i^{-1} $. 
For each $i\in \{i,\dots,k\}$ consider the set of vertices $\mathcal P_i$ which is the connected component of $\tau_i$ in the subgraph $\Delta_i$ of $(K,E)$
obtained by removing all edges labelled $g_i^{-1}\beta^{\epsilon} g_i$, $\epsilon\in \{\pm 1\}$.   Each vertex in $\Delta_i$ has at least $2(k-1)$ neighbors in $\Delta_i$ so that,
by the induction hypothesis, $|\mathcal P_i|\ge (k-1)!$. It remains to check that
the sets $\mathcal P_i$, $1\le i\le k$ are disjoints. This is the case because, by inspection
of the definitions, for each
$\sigma\in \mathcal P_i$, we have $\sigma(x_0)=g_i^{-1}$.
\end{proof}

\begin{proof}[Proof of Theorem \ref{th-beta23}] Let $\Gamma=\langle \beta,s_1,\dots,s_k\rangle =G\ltimes P_0$ be as in Theorem \ref{th-beta23}. Let
$\mu$ be a symmetric probability measure on $G$ and $\nu$ be the uniform measure on $\langle \beta\rangle $ (recall that $\beta$ has order $2$ or $3$).
Let $q=\frac{1}{2}(\mu+\nu)$.  Let $U$ be a finite subset of $\Gamma$ such that
$q(\partial U) \le   (s/ 84)|U| $ with $s\le 1/4$.  By Lemma  \ref{lem-E1} with $\epsilon=1/16$, we have
$$\#\{g\in U: \tau_g \mbox{ is } \frac{1}{2}\Lambda_{1,G,\mu}^{-1}(s)\mbox{-satisfactory }\} \ge \left(1-\frac{1}{16}\right)|U|. $$
It follows that the subgraph  $(K(U),EK(U)$ from Definition \ref{def-KU} satisfies
$$\frac{|NS(K(U),EK(U),a)|}{|EK(U)|}\le \frac{1}{4} \mbox{ for } a=\frac{1}{2}\Lambda_{1,G,\mu}^{-1}(s).$$
It follows from Lemma  \ref{lem-E2} that there is a non-empty subgraph $(K',E')$
all of whose vertex has at least  $a/4$ neighbors. By Lemma \ref{lem-E3},
$$|K(U)\ge |K'|\ge \lfloor a/8\rfloor ! .$$
Obviously,  $|U|\ge  |K(U)|$. Hence, we
have proved that  for any $s\in (0,1/4)$ and any finite subset $U\subset \Gamma$ with $q(\partial U)\le (s/84)|U|$, we must have 
$$|U|\ge   \lfloor \frac{1}{16}\Lambda^{-1}_{1,G,\mu}(s)\rfloor !.$$
This completes the proof of Theorem \ref{th-beta23}.
\end{proof}

 \subsection{Proof of Theorem \ref{th-E} for $p=1$ }
 Theorem \ref{th-E} describes lower bounds on the isoperimetric ($p=1$) and spectral profiles ($p=2$) of any group  $\Gamma$ obtained from the Cayley graphs 
 $(\Gamma_i,S_i)_1^\ell$ of $\ell$ finitely generated groups $\Gamma_i$ via rooted gluing at the identity element. Each group $\Gamma_i$ is equipped with a symmetric measure $\mu_i$ and the  group $\Gamma$ is equipped with the associated  symmetric measure $q$ defined at (\ref{def-q}). 
 
 Suppose that $i\in \{1,\dots,\ell\}$ is such that at least one of the $\Gamma_j, j\neq i$ has at least 3 elements. Then,
 according to the commutator computations recorded in Lemmas \ref{lem-com}-\ref{lem-com2}, the group $\Gamma$ contains 
 a group $\Gamma^*_i = \langle S_i,\beta\rangle =\Gamma_i\ltimes \mathbb A_0(X_i^*) $ where $X_i^* =\Gamma_i\cup \{\beta,\beta^{-1}\}$ 
and  $\beta $ also stands for the three cycle $(\id,\beta,\beta^{-1})$ (see Section \ref{sec-beta23}). If every $\Gamma_j$, $j\neq i$, is a two element group then, obviously, $\Gamma$ contains a subgroup $\Gamma^*_i = \langle S_i,\beta\rangle =\Gamma_i\ltimes \mathbb S_0(X_i^*) $ where $X_i^* =\Gamma_i\cup
 \{\beta\}$  and  $\beta $ also stands for the transposition $(\id,\beta)$.   In both cases, let $\nu$ be the uniform measure on $\langle \beta\rangle$ and set
 $\mu_{i*}=\frac{1}{2}(\mu_i+\nu)$.  By a simple comparison argument, there  is a positive constant $c$ which depends only on a positive lower bound on 
 $$\mu_*=\inf \left\{\mu_i(s): s\in S_i, i\in \{1,\dots,\ell\}\right\},$$ 
 such that 
 $$\Lambda_{p,\Gamma,q}(v) \ge  c \ell^{-1}  \Lambda_{p,\Gamma_i^*,\mu_{i*}}(v).$$
 Hence, in the case $p=1$ (isoperimetric profile) the conclusion of Theorem \ref{th-E} follows from Theorem \ref{th-beta23}.
 
 \subsection{Proof of Theorem \ref{th-E} for $p=2$ } 
 By the same comparison technique used above in the case $p=1$, in order to prove the spectral profile statement (i.e., the case $p=2$) of Theorem \ref{th-E}, it suffices to prove the spectral profile version of Theorem \ref{th-beta23} which is the following statement.
 \begin{theorem}  \label{th-beta23(2)}
Referring to the setting  of {\em Theorem \ref{th-beta23}}, let $\mu$ be a symmetric probability measure on $G$ and $\nu$ be the uniform measure on $\langle \beta \rangle$.  There are universal constants $a_1,a_2>0$  (independent of $G$ and $\mu$ and $b=2,3$) such that the symmetric probability $q=\frac{1}{2}(\mu+\nu)$ on $\Gamma$ satisfies
$$\forall\; v,s>0,\;\;\Lambda_{2,\Gamma,q}(v)\ge a_1 s  \mbox{ for all } v\le (a_2 \Lambda^{-1}_{2,G,\mu}(s))!.$$
 In particular, for all $v>0$,
$$\Lambda_{2,\Gamma,q} (v)\gtrsim \Lambda_{2,G,\mu} \left(\frac{\log(1+ v)}{\log(1+\log(1+ v))}\right).$$ \end{theorem}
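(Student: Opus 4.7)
The plan is to adapt the proof of Theorem~\ref{th-beta23} to $L^2$ test functions. Since Lemmas~\ref{lem-E2} and~\ref{lem-E3} are purely set-theoretic/combinatorial statements about the graph $(K(U),EK(U))$, they carry over unchanged; only the key quantitative Lemma~\ref{lem-E1} requires a genuine $L^2$ reformulation.

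Let $f:\Gamma\to\mathbb R$ be supported on a finite $U\subset\Gamma$ with $\mathcal E_q(f,f)\le s\|f\|_2^2$. Write $f_\tau(g):=f(g,\tau)$, $U_\tau:=\operatorname{supp}(f_\tau)$, $K:=K(U)$. Since $q=\tfrac12(\mu+\nu)$, both $\mathcal E_\mu(f,f)\le 2s\|f\|_2^2$ and $\mathcal E_\nu(f,f)\le 2s\|f\|_2^2$ hold. The action $s(g,\tau)=(sg,\tau)$ of $G$ on $\Gamma$ yields the orthogonal decomposition $\mathcal E_\mu(f,f)=\sum_\tau\mathcal E_\mu^G(f_\tau,f_\tau)$, and combined with $\mathcal E_\mu^G(f_\tau,f_\tau)\ge\Lambda_{2,G,\mu}(|U_\tau|)\|f_\tau\|_2^2$ a Markov-type averaging shows that, for $a:=\Lambda_{2,G,\mu}^{-1}(Cs)$ with $C$ large enough, most of the $L^2$-mass of $f$ sits on pairs $(g,\tau)$ with $|U_\tau|\ge a$. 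Meanwhile, the identity $\beta^{\pm 1}(g,\tau)=(g,g^{-1}\beta^{\pm 1}g\tau)$ implies that every ``isolated'' pair $(g,\tau)\in U$ (one with no $\beta^{\pm 1}$-neighbor in $U$) contributes at least a universal positive multiple of $|f(g,\tau)|^2$ to $\mathcal E_\nu(f,f)$; hence for $s$ small the $L^2$-mass on isolated pairs is a small fraction of $\|f\|_2^2$. Together these two inputs form the $L^2$ analogue of Lemma~\ref{lem-E1}.

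To extract a combinatorial statement about $(K(U),EK(U))$, split each row as $f_\tau=f_\tau^{\rm good}+f_\tau^{\rm bad}$ according to whether $(g,\tau)$ is non-isolated or isolated. For any $\tau$ that is not $\tilde a$-satisfactory in the sense of Definition~\ref{def-KU} (with $\tilde a\asymp a$), the function $f_\tau^{\rm good}$ is supported on fewer than $\tilde a$ points of $G$, so $\mathcal E_\mu^G(f_\tau^{\rm good},f_\tau^{\rm good})>Cs\|f_\tau^{\rm good}\|_2^2$. Using the parallelogram-type inequality $\mathcal E_\mu^G(f_\tau^{\rm good},f_\tau^{\rm good})\le 2\mathcal E_\mu^G(f_\tau,f_\tau)+4\|f_\tau^{\rm bad}\|_2^2$, summing over non-satisfactory $\tau$, and combining the $\mu$- and $\nu$-bounds, one forces $\sum_{\tau\text{ non-sat}}\|f_\tau\|_2^2$ to be a negligible fraction of $\|f\|_2^2$. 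An averaging step paralleling the one at the end of the proof of Theorem~\ref{th-beta23} then yields the edge-ratio $|NS(K(U),EK(U),\tilde a)|/|EK(U)|\le 1/4$, after which Lemmas~\ref{lem-E2} and~\ref{lem-E3} deliver $|U|\ge|K(U)|\ge\lfloor\tilde a/8\rfloor!$, as required.

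The main obstacle is this last passage, from the $L^2$-concentration statement ``most of the $L^2$-mass lives on satisfactory $\tau$'s'' to the combinatorial edge-ratio required by Lemma~\ref{lem-E2}. In the $L^1$ case this step is essentially automatic by averaging along edges, but in the $L^2$ case the nonuniform distribution of $\|f_\tau\|_2^2$ across the $\tau$'s, combined with the fact that truncation is not contractive for the Dirichlet form, calls for a delicate balancing between the $\mu$- and $\nu$-contributions. A cleaner alternative would be to formulate and prove an $L^2$-weighted version of Lemmas~\ref{lem-E2}--\ref{lem-E3} directly, absorbing the weights into the combinatorial statements, but this requires rewriting the edge-removal argument and is itself delicate.
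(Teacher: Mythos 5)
Your first steps (splitting $\mathcal E_q$ into the $\mu$- and $\nu$-parts, the row decomposition $\mathcal E_\mu(f,f)=\sum_\tau\mathcal E^G_\mu(f_\tau,f_\tau)$, the Markov-type argument showing that most $L^2$-mass sits on rows with $|U_\tau|\ge a$ and off isolated pairs) are sound, but the proof has a genuine gap exactly where you flag it, and it is not a technicality one can wave through: Lemma \ref{lem-E2} needs the \emph{unweighted} edge-ratio bound $|NS(K(U),EK(U),\tilde a)|/|EK(U)|\le 1/4$, whereas everything your $L^2$ argument produces is weighted by $|f|^2$. In the $L^1$ proof mass and cardinality coincide because the extremal object is an indicator function; in $L^2$ they decouple. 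Concretely, the support $U$ may contain a large collection of rows carrying negligible $L^2$-mass whose (non-satisfactory) edges dominate the unweighted count $|EK(U)|$, so the mass-concentration statement places no constraint on the edge ratio, and Lemma \ref{lem-E2} cannot be invoked. The natural repairs (level-set truncation of $f$ plus a co-area argument, or restricting to high-mass rows) reintroduce a Cheeger-type loss and only yield $\Lambda_{2,\Gamma,q}\gtrsim s^2$ on the range $v\le (c\Lambda^{-1}_{1,G,\mu}(s))!$, i.e.\ a bound in terms of $\Lambda_{1,G}^2$ rather than $\Lambda_{2,G}$, which is strictly weaker in general. So as written the argument does not prove the theorem, and the ``$L^2$-weighted edge-removal lemma'' you mention as an alternative is precisely the hard part that is left unproved.

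The paper avoids this difficulty entirely and you may want to compare: it never adapts the Erschler argument to $L^2$. Instead it invokes \cite[Theorem 4.7]{SCZ-AOP2016} to associate to $\mu$ and any $v\ge 1$ a spread-out symmetric measure $\zeta_v$ on $G$ with $\Lambda_{1,G,\zeta_v}(v)\ge 1/2$ and $\mathcal E_\mu\ge c\,\Lambda_{2,G,\mu}(8v)\,\mathcal E_{\zeta_v}$. Applying the already-proved $L^1$ result (Theorem \ref{th-beta23}) to $q_v=\frac12(\nu+\zeta_v)$ gives $\Lambda_{1,\Gamma,q_v}(u)\ge c_1/2$ for $u\le (c_2v)!$, and the Cheeger inequality (\ref{Cheeger}) is then used only at this \emph{constant} level, so no sharpness is lost: $\Lambda_{2,\Gamma,q_v}(u)\ge c_1^2/8$. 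Finally the form comparison $\mathcal E_q\ge c\,\Lambda_{2,G,\mu}(8v)\,\mathcal E_{q_v}$ transfers this to $q$, and setting $s=\Lambda_{2,G,\mu}(8v)$ yields the stated bound. If you want to salvage your draft, the cleanest route is to replace your entire second half by this comparison-with-spread-out-measures step; otherwise you must actually prove a weighted version of Lemmas \ref{lem-E2}--\ref{lem-E3}, which is a substantial new argument rather than a routine adaptation.
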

 \begin{proof}We adapt the technique of \cite[Section 4]{SCZ-AOP2016} which involves comparison with well chosen spread-out measures.
 By \cite[Theorem 4.7]{SCZ-AOP2016} (with $\phi=\mu$ and $\alpha=1/2$), for any $v\ge 1$, we can associate to the symmetric probability measure $\mu$ on $G$ another symmetric probability measure on $G$, $\zeta_{\mu,v}=\zeta_v$ such that  (the constant $c$ below is a positive numerical constant independent of $v,\mu, G$)
 $$\Lambda_{1,G,\zeta_v}(v)\ge 1/2  \mbox{ and }\; \mathcal E_\mu\ge c\Lambda_{2,G,\mu}(8v) \mathcal E_{\zeta_v}.$$
 By Theorem \ref{th-beta23}, the measure $q_v=\frac{1}{2}(\nu+\zeta_v)$ on the group $\Gamma$ satisfy
 $$\Lambda_{1, \Gamma,q_v} (u)\ge c_1/2  \mbox{ for }   u\le  (c_2 v )!.$$
 Using the left-hand side of (\ref{Cheeger}), this also gives
 $$\Lambda_{2, \Gamma,q_v} (u)\ge c^2_1/8  \mbox{ for }   u\le  (c_2 v )!.$$ 
 But it is clear that we also have  (recall that $q=\frac{1}{2}(\nu+\mu)$ on $\Gamma$)
 $$  \mathcal E_q \ge c\Lambda_{2,G,\mu}(8v) \mathcal E_{q_v}.$$ 
 So, for any $v\ge 1$, we have
 $$\Lambda_{2,q,\Gamma}(u)\ge c (c_1^2/8) \Lambda_{2,G,\mu}(8v)  \mbox{ for }   u\le  (c_2 v )!.$$  
 Setting $s=\Lambda_{2,G,\mu}(8v)$, this reads
 $$\Lambda_{2,q,\Gamma}(u)\ge c (c_1^2/8) s \mbox{ for }   u\le  \left(\frac{c_2}{8} \Lambda^{-1}_{2,G,\mu}(s) \right)!.$$  
 \end{proof}
 
\section{Houghton groups and variations}

Let $Y_k =\{o\}\cup (\cup_1^k R_i) $ where each $R_i$ is a copy of $\{1,2,\dots\}$, explicitly, 
$R_i=\{r_{i,m}: m=1,2\dots\}$. In words, $Y_k$ is the union of  $k$ copies of the   
non-negative integers where all copies of $0$ have been identified. The Houghton group $\mathcal H_k$ is the group of all permutation of $Y_k$ which are eventual translations on each ray $R_i$, $1\le i\le k$.  By definition, this means that there is a projection $\phi: \mathcal H_k \ra \mathbb Z^k$ which associates  to each element $h$ of $\mathcal H_k$, $\phi(h)=(m_1,\dots,m_k)$ where $m_i$ captures the (positive or negative) amount of eventual  translation away from $0$ along the ray $R_i$. By definition, the kernel of $\phi$ is contained in the subgroup $\mathbb S_0(Y_k)$ of those permutations
that have finite support and it must be all of them. The image of $\phi$ is the subgroup $\Sigma=\{(m_i)_1^k: \sum_1^k m _i=0\}$ of $\mathbb Z^k$.  
Indeed, by inspecting the action of an element $g\in \mathcal H_k$ on the star 
$$\mathbf S(N)=\{o\}\cup_1^k\{r_{i,n}: 1\le n\le N\}$$ where $N$ is chosen so large that $g$ acts by translation on each of  $\{r_{i,N+1},r_{i,N+2},\dots\}$, one sees that $\phi(g)=(m_1,\dots,m_k)\in \Sigma$, that is, $\sum_1^km_i=0$.
Also, for any pair $(i,j)$, $1\le i<j\le k$, consider the element $h_{i,j}$ of $\mathcal H_k$ which is``translation by $1$" along the copy of $\mathbb Z$ obtained by setting $0=o$, $-n=r_{i,n}, n=r_{j,n}, n=1,2,\dots$. Clearly, the images $\phi(h_{i,j})$, $1\le i<j\le k$, generates $\Sigma$. 
It is plain to check that, for  $i_1<j_1\le k$, $i_2<j_2\le k$, $j_1\neq j_2$, the commutator $[h_{i_1,j_1},h_{i_2,j_2}]$ is the transposition $(r_{j_1,1},r_{j_2,1})$ when $i_1=i_2$ and the three cycle $(o,r_{j_1,1},r_{j_2,1})$ if $i_1\ne i_2$. It easily follows  that  the elements $h_{i,j}$, $1\le i<j\le k$ 
generate $\mathcal H_k$ (in fact we only need $k-1$ of then chosen so that each ray $R_i$ is represented at least once)  and that
we have a short exact sequence  $$1\ra \mathbb S_0(Y) \ra \mathcal H_k \stackrel{\phi}{\ra} \Sigma \ra 1$$
with, in addition,  $\mathbb S_0(Y)=[\mathcal H_k,\mathcal H_k]$.  See, e.g., \cite{SRLee} for details and earlier references.

Given a family  $\mathfrak p$ of $ k\choose 2$ symmetric probability measures $p_{i,j}$, $1\le i<j\le k$, on $\mathbb Z$,  define a symmetric probability measure $q_{\mathfrak p}$
on $\mathcal H_k$ by setting
\begin{equation}\label{Houghtonq}
q_{\mathfrak p}(g) ={k\choose 2} ^{-1}\sum_{1\le i<j\le k} \sum_{n\in \mathbb Z} p_{i,j}(n)\mathbf 1_{\{h_{i,j} ^n\}}(g).\end{equation}
This probability measure is supported on the powers of the generators $h_{i,j}$ and we allow the possibility that $p_{i,j}(0)=1$ (at least for some pairs $(i,j)$).

\begin{theorem}\label{th-Houghton-1}
Referring to the setting and notation introduced above, let  $$(i_0,j_0)\neq (i_1,j_1),\;\;  1\le i_0<j_0\le k,\;1\le i_1<j_1\le k, $$
be such that $p_{i_m,j_m}(1)>0$
 for $m=0,1$. Then 
 there are positive constants $c_1(k,\mathfrak p),c_2(k,\mathfrak p)$ such that, for all $v,s>0$, the profiles $\Lambda_{p,\mathcal H_k,q_{\mathfrak p}}$  of the symmetric probability measure $q_{\mathfrak p}$ on $\mathcal H_k$, $p=1,2$, satisfies
$$\Lambda_{p,\mathcal H_k,q_{\mathfrak p}}(v)\ge   c_1(k,\mathfrak p) s   \mbox{ for all } v\le \left( c_2(k,\mathfrak p)
\max_{m=0,1} 
  \left\{\Lambda^{-1}_{p,\mathbb Z, p_{i_m,j_m}}(s)\right\}\right)!.$$
\end{theorem}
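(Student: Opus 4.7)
The plan is to reduce Theorem \ref{th-Houghton-1} to Theorems \ref{th-beta23} and \ref{th-beta23(2)} by identifying, for each $m \in \{0,1\}$, a subgroup of $\mathcal{H}_k$ of the form $\Gamma(\beta,\mathbb{Z})$ in the sense of Section \ref{sec-beta23}, then performing a Dirichlet form comparison, and finally taking the maximum over $m$. Fix $m$. A direct computation in the style of Lemma \ref{lem-com} shows that $\beta_0 := [h_{i_0,j_0}, h_{i_1,j_1}]$ lies in $\mathbb{S}_0(Y_k)$ and is either a transposition (when $\{i_0,j_0\}\cap\{i_1,j_1\}\neq\emptyset$) or a $3$-cycle (when these sets are disjoint, which requires $k\geq 4$). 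After conjugating $\beta_0$ by a suitable power of $h_{i_m,j_m}$, one obtains an element $\beta$ such that $\Gamma'_m := \langle h_{i_m,j_m}, \beta\rangle \leq \mathcal{H}_k$ has the semi-direct product form $\mathbb{Z}\ltimes P_0$ with $P_0 \in \{\mathbb{S}_0,\mathbb{A}_0\}$, matching the structure of $\Gamma(\beta,\mathbb{Z})$.

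Next, let $\tilde\mu_m$ denote the push-forward of $p_{i_m,j_m}$ via $n\mapsto h_{i_m,j_m}^n$, let $\nu_m$ be uniform on $\langle\beta\rangle$, and set $\tilde q_m := \tfrac12(\tilde\mu_m + \nu_m)$ on $\Gamma'_m$. The inequality $\mathcal{E}_{\tilde\mu_m} \leq \binom{k}{2}\mathcal{E}_{q_{\mathfrak p}}$ is immediate, since $\tilde\mu_m$ is a rescaled block of $q_{\mathfrak p}$. For $\nu_m$: the hypothesis $p_{i_0,j_0}(1), p_{i_1,j_1}(1) > 0$ puts positive $q_{\mathfrak p}$-mass on each of $h_{i_0,j_0}^{\pm 1}$ and $h_{i_1,j_1}^{\pm 1}$, and $\beta$ is a word of bounded length in these four letters (four for the commutator, plus up to two conjugation letters), so the standard Cauchy--Schwarz path argument yields $\mathcal{E}_{\nu_m} \leq C(k,\mathfrak p)\mathcal{E}_{q_{\mathfrak p}}$. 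Combining, $\mathcal{E}_{\tilde q_m} \leq C'(k,\mathfrak p)\mathcal{E}_{q_{\mathfrak p}}$, and by Remark \ref{enlarge} this yields $\Lambda_{p,\mathcal{H}_k,q_{\mathfrak p}}(v) \geq c(k,\mathfrak p)\Lambda_{p,\Gamma'_m,\tilde q_m}(v)$ for both $p = 1, 2$.

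Applying Theorem \ref{th-beta23} (for $p=1$) or Theorem \ref{th-beta23(2)} (for $p=2$) to $(\Gamma'_m, \tilde q_m)$ then produces $\Lambda_{p,\Gamma'_m,\tilde q_m}(v) \geq c_1 s$ for $v \leq (c_2\Lambda^{-1}_{p,\mathbb{Z}, p_{i_m,j_m}}(s))!$; combining with the comparison and taking the better of $m = 0, 1$ gives the theorem. The step I expect to be most delicate is the subgroup identification in the $3$-cycle case $\{i_0,j_0\}\cap\{i_1,j_1\} = \emptyset$: here $\beta$ necessarily touches two points of the $\langle h_{i_m,j_m}\rangle$-orbit and only one outside point, so $\Gamma'_m$ takes the form $\mathbb{Z}\ltimes\mathbb{A}_0(Z\cup\{p\})$ with a single extra point, rather than the two extra points appearing in $\Gamma(\beta,\mathbb{Z})$ with $b=3$. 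An inspection of Lemmas \ref{lem-E1}--\ref{lem-E3} shows that the underlying Erschler-style argument adapts to this variant with only cosmetic changes, since what is actually used is merely that the $\langle h_{i_m,j_m}\rangle$-conjugates of $\beta$ are numerous and act on distinct points of the $\mathbb{Z}$-orbit; an analogous minor adjustment handles the rare subcase where $\beta_0$ happens to act entirely within the relevant $\mathbb{Z}$-orbit (shared $j$-index).
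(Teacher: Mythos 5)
Your overall strategy (locate a copy of $\Gamma(\beta,\mathbb Z)$ with $\mathbb Z=\langle h_{i_m,j_m}\rangle$ inside $\mathcal H_k$, compare Dirichlet forms using $p_{i_m,j_m}(1)>0$, apply Theorem \ref{th-beta23} and Theorem \ref{th-beta23(2)}, then take the better of $m=0,1$) is exactly the paper's, and the comparison and maximization steps are fine. The gap is in the subgroup identification, precisely at the point you flag as ``delicate'' and then dismiss. First, a minor error: the commutator $[h_{i_0,j_0},h_{i_1,j_1}]$ is a transposition when $i_0=i_1$ (namely $(r_{j_0,1},r_{j_1,1})$) or when $j_0=j_1$ (namely $(o,r_{j_0,1})$), and a three-cycle $(o,r_{j_0,1},r_{j_1,1})$ whenever $i_0\neq i_1$ and $j_0\neq j_1$; the dichotomy is not governed by whether $\{i_0,j_0\}\cap\{i_1,j_1\}=\emptyset$ (e.g.\ $(1,2)$ and $(2,3)$ intersect yet give a three-cycle). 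More seriously, conjugating this commutator by powers of $h_{i_m,j_m}$ can never repair the structural mismatch: in the three-cycle case the support of your $\beta$ always contains two points of the $\langle h_{i_m,j_m}\rangle$-orbit (and in configurations such as $(i_0,j_0)=(1,2)$, $(i_1,j_1)=(2,3)$ with $m=1$, or the shared-$j$ case, the support lies entirely inside the orbit), whereas $\Gamma(\beta,\mathbb Z)$ requires $\beta$ to move only the origin $o$ plus one or two points external to the orbit.

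Your claim that Lemmas \ref{lem-E1}--\ref{lem-E3} adapt to this variant ``with only cosmetic changes'' is where the proof breaks. The proof of Lemma \ref{lem-E3} rests on the fact that the conjugates $g^{-1}\beta^{\pm1}g$ move pairwise distinct single points of the orbit and share only the external points, which yields the invariant $\sigma(x_0)=g_i^{-1}$ on each component $\mathcal P_i$ and hence their disjointness. When $\beta$ moves two orbit points, consecutive conjugates overlap on the orbit (the cycles at $n$ and at $n\pm1$ share an orbit point), the marker value is no longer frozen inside $\Delta_i$, and the disjointness argument collapses; your own justification (``the conjugates act on distinct points of the $\mathbb Z$-orbit'') is exactly what fails. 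This is not a cosmetic issue: the configuration where the small cycle moves two Cayley-graph points is the star-extension situation, which the paper explicitly singles out as harder and for which it only proves the weaker, volume-growth-based lower bound of Proposition \ref{pro-cycles} (and only for finitely supported measures), whereas Theorem \ref{th-Houghton-1} must cover measures $p_{i,j}$ of unbounded support. The simple repair, and what the paper actually does, is not to use the commutator itself as $\beta$: inside $\langle h_{i_0,j_0},h_{i_1,j_1}\rangle$ one can always find, as a word of bounded length, a transposition or three-cycle supported on $o$ together with points lying entirely outside the $\langle h_{i_m,j_m}\rangle$-orbit --- e.g.\ $(o,r_{i_{m'},1},r_{j_{m'},1})$ when the index pairs are disjoint, or a conjugate of the commutator such as $(o,r_{j_{m'},1})$ or $(o,r_{i_{m'},1})$ when they share an index. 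With that choice $\langle h_{i_m,j_m},\beta\rangle$ is literally a copy of $\Gamma(\beta,\mathbb Z)$ with $b=2$ or $3$, Theorems \ref{th-beta23} and \ref{th-beta23(2)} apply verbatim, and the rest of your argument goes through unchanged.
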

\begin{proof}  Consider for simplicity the case when  $i_0\ne i_1$, $j_0\neq j_1$. 
In $\mathcal H_k$, consider the subgroups $Z_m=<h_{i_m,j_m}>$ and the three cycles $\beta_m=(o,r_{1,i_{m'}},r_{1,j_{m'}})$ where $m'=m+1 \mod 2$, $m
\in \{0,1\}$.  By construction,  $\mathcal H_k$ contains a copy of  $\Gamma(\beta, \mathbb Z)$ with $\mathbb Z=<h_{i_m,j_m}>$ and $\beta=\beta_m$.
Further, let  $\mu_m$ denote the measure $p_{i_m,j_m}$ on $\mathbb Z=<h_{i_m,j_m}>$. Let $\nu_m$ be the uniform measure on $<\beta_m>$
and set $q=\frac{1}{2}(\nu_m+\mu_m)$, then a simple comparison argument implies that
$$\Lambda_{p,\mathcal H_k,q_{\mathfrak p}} \ge c\Lambda_{p,\mathcal H_k,q_m}, \;\;p=1,2.$$
Here the exact value of the positive constant $c$ depends on lower bounds on $p_{i_m,j_m}(1)$, $m=1,2$.
Theorem \ref{th-beta23} (and its spectral version Theorem \ref{th-beta23(2)}) implies
$$\Lambda_{p,\mathcal H_k,q_{\mathfrak p}}(v)\ge c_1 s  \mbox{ for all } v\le (c_2 \Lambda^{-1}_{p, \mathbb Z, p_{i_m,j_m}}(s))! .$$

\end{proof}

The next theorem provide matching upper-bound for the isoperimetric and spectral profiles under certain assumptions.    To obtain this upper-bound, we 
follow a line of reasoning that is similar to the one used for the rooted gluing of Cayley graphs. However, there are some significant differences in some of the details.

\begin{lemma}  \label{lem-HRep}
Exclude one of the rays, say $R_k$. For each remaining $R_i$, $1\le i\le k-1$, set $g_i=h_{i,k}$ and $Z_i=\langle  g_i\rangle$.
Any element $ \gamma \in \mathcal H_k$ admits a unique decomposition of the form
$$g=z_1\dots z_{k-1} \tau  \mbox{ with }  z_i\in Z_i , \;1\le i\le k-1, \mbox{ and  } \tau\in \mathbb S_0(Y).$$
\end{lemma}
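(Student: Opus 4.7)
The plan is to exploit the short exact sequence
$$1 \to \mathbb{S}_0(Y) \to \mathcal{H}_k \xrightarrow{\phi} \Sigma \to 1$$
recalled just before the statement, together with the observation that the $k-1$ images $\phi(g_i)$ form a $\mathbb{Z}$-basis of $\Sigma$. Indeed, from the description of $h_{i,j}$ as a translation by $+1$ along the ray $R_j$ and by $-1$ along $R_i$ (fixing the other rays eventually), we read off $\phi(g_i) = \phi(h_{i,k}) = e_k - e_i$. Since any $(m_1,\dots,m_k) \in \Sigma$ satisfies $\sum_1^k m_i = 0$, we can write it uniquely as $\sum_{i=1}^{k-1}(-m_i)(e_k - e_i)$, so $\{\phi(g_i)\}_{i=1}^{k-1}$ is indeed a basis.

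For existence, given $\gamma \in \mathcal{H}_k$, I would set $(m_1,\dots,m_k) = \phi(\gamma)$, define $z_i = g_i^{-m_i} \in Z_i$, and let $\tau = (z_1 \cdots z_{k-1})^{-1} \gamma$. By the computation above, $\phi(z_1 \cdots z_{k-1}) = \phi(\gamma)$, so $\tau \in \ker\phi = \mathbb{S}_0(Y)$ and $\gamma = z_1 \cdots z_{k-1} \tau$ has the required form.

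For uniqueness, suppose $\gamma = z_1 \cdots z_{k-1} \tau = z_1' \cdots z_{k-1}' \tau'$ with $z_i, z_i' \in Z_i$ and $\tau,\tau' \in \mathbb{S}_0(Y)$. Writing $z_i = g_i^{n_i}$ and $z_i' = g_i^{n_i'}$ and applying $\phi$ (which kills $\tau$ and $\tau'$), we obtain
$$\sum_{i=1}^{k-1} n_i\,(e_k - e_i) = \sum_{i=1}^{k-1} n_i'\,(e_k - e_i).$$
Linear independence of the $\phi(g_i)$ in $\Sigma$ forces $n_i = n_i'$ for each $i$, hence $z_i = z_i'$, and then $\tau = \tau'$ follows immediately. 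There is no real obstacle here: the whole argument reduces to linear algebra over $\mathbb{Z}$ once the images $\phi(g_i)$ have been identified. The only subtle point worth stressing is that although the subgroups $Z_1,\dots,Z_{k-1}$ do not commute inside $\mathcal{H}_k$, they commute modulo $\mathbb{S}_0(Y)$ (because $\Sigma$ is abelian), which is exactly what makes the ordered product $z_1 \cdots z_{k-1}$ a well-defined set of coset representatives.
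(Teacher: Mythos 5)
Your proposal is correct and takes essentially the same route as the paper: uniqueness is read off from the projection $\phi$ onto $\Sigma$ exactly as in the paper's proof. Your existence step, which solves for the exponents in the basis $\phi(g_i)=e_k-e_i$ of $\Sigma$ and absorbs the leftover factor into $\ker\phi=\mathbb S_0(Y)$, is just a slightly more explicit packaging of the paper's observation that the $g_i$ generate $\mathcal H_k$ with commutators lying in the normal subgroup $\mathbb S_0(Y)$.
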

\begin{proof}  Since $\mathcal H_k$ is generated by $g_1,\dots, g_{k-1}$ (see above) and have commutators in $\mathcal S_0(Y)$, it is plain that 
every element $g\in \mathcal H_k$ can be written as described above. To prove uniqueness, we observe that the  integer vector $(z_1,\dots,z_{k-1})$
is uniquely determined by the condition that  $\phi(g)=(-z_1,-z_2,\dots,-z_{k-1}, \sum_1^{k-1}z_k)$ 
\end{proof}

\begin{theorem}  \label{th-Houghton-2}
Fix $p=1,2$. For any $s\in (0,1]$, let $\psi_s$ be a symmetric non-negative function on $\mathbb Z$  supported on $(-r(s),r(s))$, normalized by  $\|\psi_r\|_p=1$, and such that for any $1\le i<j\le k$, 
$$\frac{1}{2}\sum_{x,y\in \mathbb Z}|\psi_s(x+y)-\psi_r(x)|^pp_{i,j}(y)\le  s .$$
Then
$$\forall v,s>0, \;\; \Lambda_{p,\mathcal H_k,q}(s)\le 2s \;\mbox{ for all } \;  v\ge ([k+3]r(s))!$$
\end{theorem}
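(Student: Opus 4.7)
My plan is to construct an explicit test function on $\mathcal H_k$ whose abelian part is the tensor product of $\psi_s$ along the canonical basis of $\Sigma$, and whose finite-permutation part is an indicator on permutations supported on a controlled subset $W\subset Y_k$.

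Using Lemma \ref{lem-HRep}, decompose each $\gamma\in\mathcal H_k$ uniquely as $\gamma=g_1^{n_1}\cdots g_{k-1}^{n_{k-1}}\tau_\gamma$ with $g_l=h_{l,k}$ and $\tau_\gamma\in\mathbb S_0(Y_k)$. Choose a star $W=\{o\}\cup\bigcup_{l=1}^k\{r_{l,n}:1\le n\le N_l\}$ with $N_l\approx r(s)$ for $l<k$ and $N_k\approx 4r(s)$, so that $|W|\le(k+3)r(s)-(k-1)$, and let $V=\{\tau\in\mathbb S_0(Y_k):\mathrm{supp}(\tau)\subset W\}$. Define
\[
\Psi(\gamma)=\mathbf 1_V(\tau_\gamma)\prod_{l=1}^{k-1}\psi_s(n_l).
\]
The support of $\Psi$ has at most $(2r(s))^{k-1}|W|!$ elements, which is bounded by $((k+3)r(s))!$ since $((k+3)r(s))!/|W|!$ is a product of $k-1$ consecutive integers each at least $(k+2)r(s)\ge 2r(s)$.

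The key technical step is to verify that $W$ absorbs the commutator correction $\sigma$ defined by the rewriting $h_{i,j}^m\gamma=g_1^{n'_1}\cdots g_{k-1}^{n'_{k-1}}(\sigma\tau_\gamma)$. Using the identity $h_{i,j}=g_ig_j^{-1}$ (verified directly in $\mathcal H_k$ for $i<j<k$) and the canonical reordering of the $g_l$'s, $\sigma$ factors as a product of elementary commutators $[g_a^r,g_b^s]$ and their conjugates. A direct computation, generalizing the observation that $[g_1,g_2]=(o,r_{k,1})$ in $\mathcal H_3$, shows each such elementary commutator is supported on $\{o\}\cup R_k$ within the first $O(r(s))$ positions of $R_k$; conjugation by $g_l^{n_l}$ can spread support onto at most the first $|n_l|\le r(s)$ elements of $R_l$ and translates the $R_k$-portion by $|n_l|$. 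Summing these contributions for $|m|,|n_l|\le r(s)$ keeps $\mathrm{supp}(\sigma)$ inside the star $W$.

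With this containment, the $\mathbf 1_V$ factor cancels across both sides of every edge in the Dirichlet sum, so
\[
\mathcal E_q(\Psi,\Psi)=\tfrac12\binom{k}{2}^{-1}\sum_{i<j}\sum_m p_{i,j}(m)\sum_\gamma\Bigl|\textstyle\prod_l\psi_s(n'_l)-\prod_l\psi_s(n_l)\Bigr|^p\mathbf 1_V(\tau_\gamma).
\]
Each generator shifts at most two of the $n_l$'s, so applying the elementary estimate $|ab-cd|\le b|a-c|+c|b-d|$ (and its $L^2$ analog) reduces each summand to a one-dimensional Dirichlet quantity for $\psi_s$. The hypothesis $\tfrac12\sum|\psi_s(x+m)-\psi_s(x)|^p p_{i,j}(m)\le s$ together with $\|\psi_s\|_p=1$ then bounds each pair's contribution by a multiple of $s\|\Psi\|_p^p$, and summing over $i<j$ with weight $\binom{k}{2}^{-1}$ yields $\mathcal E_q(\Psi,\Psi)\le 2s\|\Psi\|_p^p$. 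The main obstacle is the support-tracking for $\sigma$: it is precisely the localization of Houghton-group commutators on $\{o\}\cup R_k$ that allows $|W|$ to fit within the tight budget $(k+3)r(s)-(k-1)$, which is where the constant $k+3$ in the theorem originates.
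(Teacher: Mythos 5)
Your overall strategy coincides with the paper's (decompose via Lemma \ref{lem-HRep}, take a product test function $\prod_l\psi_s(n_l)$ times an indicator of permutations supported in a fixed finite set, track the support of the commutator correction, and reduce to one-dimensional Dirichlet quantities), but your key support-containment claim is false, and this is a genuine gap. The correction $\sigma$ produced by rewriting $h_{u,v}^m\gamma$ is conjugated by the whole prefix $g_1^{n_1}\cdots g_{k-1}^{n_{k-1}}$, and the displacements these factors impose on the \emph{shared} ray $R_k$ accumulate: each $g_l^{n_l}$ with $n_l<0$ pushes points of $R_k$ towards $o$ by $|n_l|$, and nothing prevents all $k-1$ exponents from being close to $-r(s)$ simultaneously. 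Concretely, take $k\ge 6$, $n_1=\cdots=n_{k-1}=-(r-1)$ with $r=r(s)$ large, $\tau=\idm$, and $h=h_{k-1,k}=g_{k-1}$ (so $m=1$, all constraints $|n_l|<r$, $|n_l'|<r$ hold); a direct computation shows $\sigma$ sends $r_{k,(k-1)(r-1)-1}$ to $r_{k,(k-1)(r-1)-2}$, a point at depth about $(k-1)r$ on $R_k$, far outside your star $W$, whose $R_k$-part has depth only $N_k\approx 4r(s)$. (Separately, the elementary commutators $[g_a^{r},g_b^{s}]$ lie in $\{o\}\cup R_k$ only for suitable sign patterns: e.g.\ $[g_u^{m},g_l^{-n}]$ with $0<m<n$ moves $o$ to $r_{l,m}$.) Once $\mathrm{supp}(\sigma)\not\subset W$, the factor $\mathbf 1_V$ no longer cancels across the corresponding edges: one picks up boundary terms $|\Psi(g)|^2q(h)$ with all $|n_l'|<r(s)$ that are not controlled by the hypothesis on $\psi_s$, so the bound $2s$ is not established by your argument for general admissible $\psi_s$ once $k\ge 6$.

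The gap comes from trying to fit the support of $\Psi$ under the literal bound $([k+3]r(s))!$. The paper's proof does not attempt this: it takes $V_s$ to be the permutations supported in the full symmetric star $\mathbf S((k+3)r(s))$, i.e.\ radius $(k+3)r(s)$ on \emph{every} ray, which contains $\mathrm{supp}(\tau)\cup\mathbf S(2|m|+\sum_l|n_l|)$ and makes the containment immediate; the price is that the resulting support bound for the test function is $(2r(s))^{k-1}\bigl(1+k(k+3)r(s)\bigr)!$, larger than $([k+3]r(s))!$ but $\simeq$-equivalent to it, which is all that is used afterwards (in Theorem \ref{th-Houghtonstable}). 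Your construction is repaired the same way, or minimally by taking $N_k\gtrsim (k+1)r(s)$ (your budget of roughly $r(s)$ on the rays $R_l$, $l<k$, is correct, since on those rays the relevant towards-$o$ displacements are capped by the constraints $|n_l|,|n_l'|<r(s)$); either fix only changes the constant inside the factorial, and a sharper accounting would be needed if one insists on the literal constant $k+3$ of the statement.
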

\begin{proof} The case $p=1,2$ are similar and, for simplicity, we focus on the case $p=2$. Making use of Lemma \ref{lem-HRep}, consider the test function
$$\Psi_s(g)= \mathbf 1_{V_s}(\tau) \prod_{i=1}^{k-1}\psi_s(z_i), \;\;g=z_1\dots z_{k-1}\tau \in \mathcal H_k$$
where the set $V_s\subset \mathbb S_0(Y)$ will be chosen later.
Obviously, we have
$$\|\Psi_s\|_2^2=|V_s| \|\psi_s\|_2^{2(k-1)}.$$
Next, we want to estimate $\mathcal E_{\mathcal H_k,q}(\Psi_s,\Psi_s)$ from above.  This involves computing the products
$h_{u,v}^m z_1\dots z_{k-1} \tau$  where $1\le u<v\le k$ and $m\in \mathbb Z$.  By inspecting the commutator relations  between 
$h^z_{i,k}$ and $h^m_{u,v}$, one finds that
$$h_{u,v}^m z_1\dots z_{k-1} \tau = z_1'\dots z_{k-1}' \tau'$$
where
$$ z'_i=z_i+\epsilon_{i,u,v} m  \mbox{ with }  \epsilon_{i,u,v}=\left\{\begin{array}{cl}0 &\mbox{ if } i\not\in \{u,v\}\\ 1 &\mbox{ if } i=u\\
-1& \mbox{ if } i=v \end{array}\right.$$
and
$$\mbox{support}(\tau') \subset \mbox{support}(\tau) \cup \mathbf S\left(2|m|+\sum_1^{k-1}|z_i|\right).$$
We choose 
$$V_s=\left\{\tau: \mbox{support}(\tau)\subset \mathbf S((k+3)r(s))\right\}$$
so that 
$$|z_i|\le r(s),\; |m|\le 2r(s) \mbox{ and }\tau\in V_s  \mbox{ implies } \tau' \in V_s.$$
Write
\begin{eqnarray*}\lefteqn{2\mathcal E_{\mathcal H_k,q}(\Psi_s,\Psi_s)=  \sum_{\Psi_s(g)\Psi_s(hg)\neq 0}|\Psi_s(hg)-\Psi_s(g)|^2 q(h) }\hspace{1.5in} &&\\
&& +
2\sum_{\Psi_s(g)\neq 0,\Psi_s(hg)=0}|\Psi_s(g)|^2q(h).
\end{eqnarray*}
By inspection, the right most term on the right-hand side is bounded above by
$$4|V_s|  {k\choose 2} ^{-1} \|\psi_s\|_2^{2(k-2)}\sum_{1\le u<v\le k}\; \sum_{\psi_s(z)\neq 0,\psi_s(z+y)=0}|\psi_s(z)|^2p_{u,v}(y).$$
Consider $g,h$ such that $\Psi_s(g)\Psi(hg)\neq 0$ with  $q(h)\neq 0$  and write $g=z_1\dots z_{k-1}\tau$, $h=h_{u,v}^m$.  We must have
$$|z_i|\le r(s),  \;\;|m|\le 2r(s) \mbox{ and }\;\mbox{support}(\tau)\subset \mathbf S((k+3)r(s)).$$  Let us consider the more difficult case when $v\neq k$.
Then, we have
\begin{eqnarray*}\lefteqn{|\Psi_s(hg)-\Psi_s(g)|^2=}&&\\ && \mathbf 1_{V_s}(\tau)\prod_{j\not\in \{u,v\}}|\psi_s(z_j)|^2
\left| \psi_s(z_u+m)\psi_s(z_v-m)-\psi_s(z_u)\psi_s(z_v)\right|^2
\end{eqnarray*}
and
\begin{eqnarray*}\lefteqn{\left| \psi_s(z_u+m)\psi_s(z_v-m)-\psi_s(z_u)\psi_s(z_v)\right|^2 \le}&&\\
&& 2\left(|\psi_s(z_u+m)|^2|\psi_s(z_v-m)-\psi_s(z_v)|^2  +
 |\psi_s(z_v|^2|\psi_s(z_u+m)-\psi_s(z_u)|^2     \right).
\end{eqnarray*}
Using this inequality and summing up we obtain that
$$ \sum_{\Psi_s(g)\Psi_s(hg)\neq 0}|\Psi_s(hg)-\Psi_s(g)|^2 q(h) $$
is bounded by
$$2 {k\choose 2} ^{-1} |V_s|\|\psi_s\|_2^{2(k-2)} \sum_{1\le u<v\le k}\;\sum_{\psi_s(z)\psi_s(z+y)\neq 0}|\psi_s(z+y)-\psi_s(z)|^2p_{u,v}(y).
$$
Putting the different terms together yields
$$2\mathcal E_{\mathcal H_k,q}(\Psi_s,\Psi_s)\le 4|V_s| {k\choose 2} ^{-1} \|\psi_s\|_2^{2(k-2)}\sum_{1\le u<v\le k} \mathcal E_{\mathbb Z,p_{u.v}}(\psi_s,\psi_s)$$
and
$$\frac{\mathcal E_{\mathcal H_k,q}(\Psi_s,\Psi_s)}{\|\Psi_s\|_2^2}\le  2s.$$
\end{proof}

The following results describe the spectral profile of $q$ in the special case when each $p_{i,j}$ is one of the measures
$$\xi_\alpha,\;
 \alpha\in (0,\infty)\cup \{\mathfrak s,\mathfrak t\} $$  where
 $$\xi_\alpha(m)=\left\{\begin{array}{cl}c_\alpha (1+|m|)^{-\alpha-1}  & \mbox{ if } \alpha\in (0,\infty)\\
 3^{-1}\mathbf 1_{\{-1,0,1\}}(m) & \mbox{ if } \alpha=\mathfrak s\\
 \delta_0 & \mbox{ if } \alpha =\mathfrak t.\end{array}\right.$$
 Set 
 $$ \rho_\alpha(s)= \left\{\begin{array}{cl} 
  s^{-1/\alpha} & \mbox{ if }\alpha\in (0,2)\\
 s^{-1/2} [1+\log (1/s)]^{1/2}&\mbox{ if } \alpha=2\\
 s^{-1/2} & \mbox{ if } \alpha\in (2,\infty)\cup \{\mathfrak s\} \\
 0 &\mbox{ if } \alpha=\mathfrak t  \end{array}\right.$$
 
\begin{theorem} \label{th-Houghtonstable}
On the Houghton group $\mathcal H_k$, let the probability measure $q_\mathfrak p$ at {\em (\ref{Houghtonq})} be such that 
for each $1\le i<j\le k$,
$p_{i,j}=\xi_{\alpha_{i,j}}$, $\alpha_{i,j}\in (0,\infty)\cup \{\mathfrak s,\mathfrak t\}$.  Assume that at least two $\alpha_{i,j}$ are different from $\mathfrak t$
and set 
$$\rho(s)= \max\{\rho_{\alpha_{i,j}}(s): 1\le i<j\le k\}.$$ 
Then , for all $v,s>0$, we have
$$\Lambda_{2,\mathcal H_k,q}(v) \simeq  \rho^{-1}\left(\frac{\log(1+ v)}{\log(1+\log(1+ v))}\right)  .$$
In particular, 
$$\Lambda_{2,\mathcal H_k}(v) \simeq  \left(\frac{\log(1+v)}{\log(1+\log(1+ v))}\right)^2.$$\end{theorem}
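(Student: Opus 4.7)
The plan is to combine the lower bound from Theorem \ref{th-Houghton-1} and the upper bound from Theorem \ref{th-Houghton-2}, after establishing that the one-dimensional spectral profile of $(\mathbb Z,\xi_\alpha)$ is precisely captured by $\rho_\alpha$, namely that $\Lambda^{-1}_{2,\mathbb Z,\xi_\alpha}(s)\simeq \rho_\alpha(s)$ for each $\alpha\in(0,\infty)\cup\{\mathfrak s,\mathfrak t\}$. This one-dimensional fact is standard: for $\alpha\in(0,2)$ the measure $\xi_\alpha$ is in the domain of attraction of an $\alpha$-stable law and the Dirichlet form associated with it is comparable on characteristic functions of intervals to the $\alpha/2$-fractional Laplacian; for $\alpha\ge 2$ and for the simple random walk case $\mathfrak s$ the second moment is finite and one recovers the Dirichlet profile of the nearest neighbor walk; the case $\alpha=2$ is borderline and produces the logarithmic correction. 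I would collect these in a preliminary lemma and then move on.

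For the lower bound, the hypothesis that at least two $\alpha_{i,j}$ differ from $\mathfrak t$ furnishes two pairs $(i_0,j_0)\ne (i_1,j_1)$ with $p_{i_m,j_m}(1)>0$, which allows me to apply Theorem \ref{th-Houghton-1}. I would choose these two pairs so that one of them realizes the maximum $\rho_{\alpha_{i,j}}(s)=\rho(s)$; the second pair is only needed to trigger the hypothesis and plays no further role in the estimate. Theorem \ref{th-Houghton-1} then yields
\[
\Lambda_{2,\mathcal H_k,q}(v)\ge c_1 s \quad\text{whenever}\quad v\le \bigl(c_2\,\rho(s)\bigr)!.
\]
Inverting the factorial via $\log v\asymp \rho(s)\log\rho(s)$ (so that $\rho(s)\asymp \log(1+v)/\log(1+\log(1+v))$) and solving for $s$ produces the desired lower bound $\Lambda_{2,\mathcal H_k,q}(v)\gtrsim \rho^{-1}\!\bigl(\log(1+v)/\log(1+\log(1+v))\bigr)$.

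For the upper bound I need, for each $s>0$, a single nonnegative symmetric test function $\psi_s$ on $\mathbb Z$ supported in $(-r(s),r(s))$ with $r(s)\asymp \rho(s)$, satisfying $\tfrac12\sum_{x,y}|\psi_s(x+y)-\psi_s(x)|^2 p_{i,j}(y)\le s\|\psi_s\|_2^2$ \emph{simultaneously for every pair} $(i,j)$. The natural candidate is the normalized tent $\psi_s(x)=(r(s)-|x|)_+$; for the $\delta_0$ pairs there is nothing to check, for the $\mathfrak s$-pairs and for $\alpha>2$ one uses a discrete gradient bound plus the finite second moment, for $\alpha\in(0,2)$ one splits the sum over $|y|\le r(s)$ and $|y|>r(s)$ and compares to $r(s)^{-\alpha}$, and for the borderline $\alpha=2$ case one gets the extra $\log(r(s))$ factor absorbed by our choice $r(s)\asymp s^{-1/2}(\log(1/s))^{1/2}$. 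Plugging into Theorem \ref{th-Houghton-2} gives $\Lambda_{2,\mathcal H_k,q}(v)\le 2s$ for $v\ge((k+3)r(s))!$; inverting exactly as in the lower bound produces the matching upper bound.

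The expected main obstacle is the uniform Dirichlet form verification in the upper bound: a single test function must suppress the Dirichlet form of every $\xi_\alpha$ appearing in $\mathfrak p$, and one has to be slightly careful at the boundary case $\alpha=2$ because the natural rate $r(s)\asymp s^{-1/2}$ is too small and must be adjusted by a logarithmic factor that still matches the definition of $\rho$. Once the uniform bound is in hand, the ``in particular'' statement for $\mathcal H_k$ follows by taking each $p_{i,j}$ to be supported on the canonical generators (so that every $\alpha_{i,j}\in\{\mathfrak s,\mathfrak t\}$ with at least two of them equal to $\mathfrak s$), yielding $\rho(s)=\rho_{\mathfrak s}(s)\asymp s^{-1/2}$ and hence $\rho^{-1}(u)\asymp u^{-2}$, which one substitutes into the general formula.
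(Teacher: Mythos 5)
Your proposal is correct and follows essentially the same route as the paper: the lower bound via Theorem \ref{th-Houghton-1}, the matching upper bound via Theorem \ref{th-Houghton-2} with the tent test functions $f_t(z)=(t-|z|)_+$, and the one-dimensional identification $\Lambda_{2,\mathbb Z,\xi_\alpha}\simeq\rho_\alpha^{-1}$ (which the paper delegates to \cite[A.2]{SCZ-AOP2016}). Your added observations---that the pair realizing $\rho(s)$ automatically has $p_{i,j}(1)>0$ since $\rho_{\mathfrak t}\equiv 0$, and that a single tent of radius $\asymp\rho(s)$ controls all Dirichlet forms simultaneously, with the logarithmic adjustment at $\alpha=2$---are exactly the details the paper leaves implicit.
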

\begin{proof} The lower estimate for the spectral profile is obtained by Theorem  \ref{th-Houghton-1}. The matching upper bound follows from 
Theorem \ref{th-Houghton-2}. All we need to check is that  $\Lambda_{2,\mathbb Z,\xi_\alpha}(v)\simeq \rho^{-1}_\alpha (v)$  (in order to apply Theorem \ref{th-Houghton-1}) and that the functions $f_t(z)= (t-|z|)_+$ provide good test functions  in the sense that
$$\frac{\mathcal E_{\xi_\alpha}(f_t,f_t)}{\|f_t\|_2^2} \simeq  \rho^{-1}_\alpha(t).$$
See \cite[A.2]{SCZ-AOP2016}.
\end{proof}
\begin{remark} One can prove a version of Theorem \ref{th-Houghtonstable} dealing with  the isoperimetric profile instead of the spectral profile by using a similar line of reasoning and the results of \cite[A.2]{SCZ-AOP2016}. 
\end{remark}

\section{Other examples: Schreier graphs and star extensions}

Star extension of Cayley graphs (Example \ref{star}) and 
pocket and rooted extensions based on Schreier graphs are, in general, more difficult to handle than the  pocket and rooted extensions of Cayley graphs
treated in the previous section.  In this section we look, successively, at rooted extensions based on Schreier graphs and 
at star extensions of Cayley graphs.

Structurally, what makes a rooted extension $\Gamma$ of a Cayley graph easier to handle is the fact that the permutations of the underlying set $X$ appearing in $\Gamma$  can be reduced to translations along the constituent subgroups associated with the original Cayley graphs times finite support permutations of $X$. In the general Schreier graph case, even so any element of $\Gamma$ appears to ``look like'' a translation at infinity in each of the constituent Schreier graphs, it is not possible to assign uniquely an actual element of the corresponding subgroup of $\Gamma$ to capture this effect. Nevertheless, in some simple cases when the main feature governing the behavior of random walks on $\Gamma$ is the volume growth functions  of the constituent Schreier graphs, it is possible to obtain satisfactory results via a rather coarse approach explained in the next section.  This same approach applies as well to the study of star extensions of Cayley graphs.

Note that the results obtained for  rooted extensions based on Cayley graphs allow us a large variety of measure  $q$ including the possibility of measures 
with infinite support.  The results obtained in this section are restricted to finitely supported measures (up to comparisons of forms).

\subsection{Comparison with random three cycles} \label{sec-3cycles}

Consider $\ell$ labelled rooted connected graphs $(X_i,E_i,m_i,o_i)$, $i=1,2,\dots,\ell$ (with distinct labellings). Let $(X,E,m)$ be the labelled graph  with vertex set $X= (\cup_1^\ell X_i\setminus\{o_i\}  )  \cup \{o\}$ corresponding to identifying the points $o_1,o_2,\dots,o_\ell$. Let $\Gamma$ be the corresponding subgroup of $\mathbb S(X)$. The group $\Gamma$ contains a copy of each $\Gamma_i$
where $\Gamma_i$ is the group defined by $(X_i,E_i,m_i)$ and also a copy of  $\mathbb A_0(X)$ with
$$\mathbb A_0(X)
= \langle[ \Gamma_i,\Gamma_j]^\Gamma; i\neq j\rangle.$$
Indeed, one verifies by inspection that for any two elements $g_i,g_j\in \Gamma$, $g_i\in \Gamma_i,g_j\in \Gamma_j$, which move $o$, $i\neq j$,
we have
$$ [g_i,g_j]=g_ig_jg_i^{-1} g_i^{-1}=(o,g_i\cdot o,g_j\cdot o),$$ as in Lemma \ref{lem-com},
$$g[g_i,g_j]g^{-1}=\left\{\begin{array}{ll}(g\cdot o,g_i\cdot o,g_j\cdot o) &\mbox{ if } g\in \Gamma_k, k\not\in \{i,j\}\\
(g\cdot o, g g_i\cdot o,g_j\cdot o) &\mbox{ if } g\in \Gamma_i\\
(g\cdot o,g_i\cdot o, gg_j\cdot o) &\mbox{ if } g\in \Gamma_j,\end{array}\right.$$
and (still assuming $g_j\cdot o\neq o$)
$$g_jg [g^{-1}g',g_j]g^{-1}g_j^{-1}=(o,g\cdot o,g'\cdot o), \;\;g,g'\in \Gamma_i,\; g_j\in \Gamma_j,\; i\neq j,\; g^{-1}g'\cdot o\neq o.$$

Any element $\gamma$ of $\Gamma$ can be written in the form  $\gamma=g_1\dots,g_\ell \tau$ with $g_i\in \Gamma_i$ and $\tau \in \mathbb A_0(X)$ but this can possibly be done in many different ways since pairs of elements in a given $\Gamma_i$ may only differ via a permutation of finite support of $X_i$.
Note that $\gamma=g_1\dots,g_\ell \tau$ with $g_i\in \Gamma_i$ and $\tau \in \mathbb A_0(X)$ belongs to $\mathbb S_0(X)$ if and only if each $g_i$ reduces  to  a finite permutation on $X_i$. This shows that $\Gamma$ contains the full symmetric group with finite support $\mathbb S_0(X)$ exactly when at least one of the groups $\Gamma_i$ contains an odd  permutation with finite support.  The following Proposition is tailored to cover the situations described above but is framed in a much more general setting.

\begin{proposition}\label{pro-cycles}
Let $\Gamma$ be a finitely generated group with finite generating set $T=\{\theta_1^{\pm 1},\dots,\theta_k^{\pm 1}\}$. Assume that $(X,E,m,o)$
is a connected labelled rooted Schreier graph for $(\Gamma,T)$. Let $d$ be the graph distance between two points of $X$ and set $B_X(o,r)=\{x\in X: d(o,x)\le r\}$. Assume that  either 
$\mathbb S_0(X)\subset \Gamma$ with
$$|(o,x)|_T \le D \max\{d(o,x)\}$$
or, more generally, that 
$\mathbb A_0(X)\subset \Gamma$ with
$$|(x,y,z)|_T\le D \max\{d(o,x),d(o,y),d(o,z)\}.$$
Then there is constants $c_1$ such that 
$$\Lambda_{1,\Gamma,\mathbf u}(v)\ge  c_1 r^{-1}  \mbox{ for all } v<  \sqrt{|B_X(o,r)|!} $$
where $\mathbf u=|S|^{-1}\mathbf 1_S$ is the uniform measure on the symmetric generating set $S$.
\end{proposition}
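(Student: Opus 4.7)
The plan is to compare the random walk driven by $\mathbf u$ with an auxiliary measure $\nu_r$ concentrated on large cycles supported in $B_X(o,r)$, and then to reduce the question to a finite-group isoperimetric estimate for the alternating (or symmetric) group on $B_X(o,r)$.

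Let $N=|B_X(o,r)|$, and let $\nu_r$ be the uniform probability measure on the set of three-cycles $(x,y,z)$ with $x,y,z\in B_X(o,r)$ in the $\mathbb A_0$ case (respectively on the transpositions $(o,x)$, $x\in B_X(o,r)$, in the $\mathbb S_0$ case). By the hypothesis, every element in the support of $\nu_r$ lies in $\Gamma$ and has word length at most $Dr$ in $T$. Choosing such a word of length $L\le Dr$ for each element of the support of $\nu_r$ and using the standard telescoping-plus-change-of-variable argument gives the $L^1$ Dirichlet-form comparison
\[
\mathcal E^1_{\nu_r}(f,f)\;\le\;D\,|T|\,r\;\mathcal E^1_{\mathbf u}(f,f),
\]
hence, specializing to $f=\mathbf 1_\Omega$, the profile comparison $\Lambda_{1,\Gamma,\mathbf u}(v)\ge \Lambda_{1,\Gamma,\nu_r}(v)/(D|T|r)$. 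Next, $\nu_r$ is supported on the finite subgroup $H\leq\Gamma$ obtained by realizing $\mathbb A_N$ (or $\mathbb S_N$) as permutations of $B_X(o,r)$ fixing the rest of $X$; decomposing any finite $\Omega\subset\Gamma$ along its intersections $\Omega_c=\Omega\cap cH$ with the right cosets of $H$, one obtains $\nu_r(\partial\Omega)=\sum_c\nu_r(\partial\Omega_c)$, where each inner boundary is computed in the coset $cH$. A coset that minimizes the ratio $\nu_r(\partial\Omega_c)/|\Omega_c|$ is smaller than the whole set, so the non-increasing property of $\Lambda_{1,H,\nu_r}$ yields $\Lambda_{1,\Gamma,\nu_r}(v)\ge \Lambda_{1,H,\nu_r}(v)$.

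The final ingredient is a dimension-free lower bound on the $L^1$-isoperimetric profile of the finite group $H=\mathbb A_N$ (or $\mathbb S_N$) under the uniform three-cycle (respectively transposition) measure, namely: there exists a universal constant $c_0>0$ such that
\[
\Lambda_{1,H,\nu_r}(v)\;\ge\;c_0\quad\text{for all }v\le\sqrt{|H|}.
\]
Combining this with the two previous steps produces $\Lambda_{1,\Gamma,\mathbf u}(v)\ge c_0/(D|T|r)$ for $v<\sqrt{N!}$, which is the desired estimate. The bound above can be obtained from the log-Sobolev inequality for the random transposition / 3-cycle walk together with the classical implication $\phi(\partial\Omega)/|\Omega|\ge (\alpha/2)\log(|H|/|\Omega|)$, since at the critical scale $v\le\sqrt{|H|}$ one has $\log(|H|/v)\gtrsim \log|H|$, which exactly cancels the $N$-dependence of the log-Sobolev constant. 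The main obstacle in the proof is precisely this step: merely applying the spectral gap, which is of order $1/N$, would give only the weaker bound $\Lambda_{1,\Gamma,\mathbf u}(v)\gtrsim 1/(rN)=1/(r|B_X(o,r)|)$, losing a factor of $|B_X(o,r)|$. The correct bound requires the stronger dimension-free isoperimetric behavior of the symmetric/alternating group at the scale $\sqrt{|H|}$, which must be invoked (or derived) carefully with a $N$-independent constant.
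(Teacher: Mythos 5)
Your proposal is correct in outline and shares the paper's overall skeleton: compare the Dirichlet form of $\mathbf u$ with that of the uniform measure on three-cycles (resp.\ star transpositions) supported in $B_X(o,r)$, at the cost of a factor of order $D|T|r$; observe that this auxiliary measure lives on a finite subgroup $H\cong \mathbb A_N$ (resp.\ $\mathbb S_N$), $N=|B_X(o,r)|$, so that the profile of $(\Gamma,\nu_r)$ reduces to that of $(H,\nu_r)$ (this is exactly Remark \ref{enlarge} in the paper; note only that with left-multiplication walks the decomposition should be along right cosets $Hc$, not $cH$ --- a harmless notational slip); and finally invoke a dimension-free lower bound on the profile of $H$ at the scale $\sqrt{N!}$. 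Where you genuinely diverge is in the proof of this last, crucial estimate. The paper's Lemma \ref{lem-AN} gets it from an $\ell^\infty$ return-probability bound at the mixing time $t_N=\tfrac23 N\log N$ for the random three-cycle walk (Roussel, in the spirit of Diaconis--Shahshahani), combined with the Dirichlet trace bound $\mu_N^{(t)}(\idm)\ge |U|^{-1}\exp(-\lambda(\mu_N,U)t)$, which yields $\Lambda_{1}\ge\Lambda_{2}\ge 1-\epsilon$ on sets of size $\le (N!)^{\epsilon}$. You instead use the logarithmic Sobolev inequality for the transposition/three-cycle walk, with constant of order $1/(N\log N)$, together with the standard entropy bound for indicators, $\nu_r(\partial\Omega)/|\Omega|\gtrsim \alpha\log(|H|/|\Omega|)$; at the scale $|\Omega|\le\sqrt{|H|}$ the factor $\log(|H|/|\Omega|)\asymp N\log N$ cancels the log-Sobolev constant, and your observation that the spectral gap alone (order $1/N$) would lose a factor $|B_X(o,r)|$ is exactly right. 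Both routes are legitimate and of comparable depth; the paper's input is a readily citable $\ell^\infty$ cutoff-type estimate for the three-cycle walk, while yours requires the log-Sobolev constant with an $N$-independent prefactor for the uniform three-cycle walk on $\mathbb A_N$ (and for the star-transposition walk on $\mathbb S_N$ in the first case). That constant is known for random transpositions on $\mathbb S_N$ (Lee--Yau, Diaconis--Saloff-Coste), but for the three-cycle walk on $\mathbb A_N$ you should either cite a result covering conjugacy-class walks on $\mathbb A_N$ or include the (routine but not one-line) comparison argument transferring the inequality from $\mathbb S_N$ to $\mathbb A_N$; as stated, that is the only point where your write-up leans on an assertion that still needs a precise reference or proof.
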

\begin{remark}
An acceptable lower bound on the spectral profile is obtained by applying the general inequality
$\Lambda_{2,\Gamma,\mathbf u}(v)\ge\frac{1}{2} \Lambda_{1,\Gamma,\mathbf u}(v)^2$. 
\end{remark}
Note that the first case is actually covered by the second case. In the first case where one can use the transpositions $(o,x)$, a simpler direct proof using comparison with the ``transpose $o$ and $x$'' random walk can be given. It follows the same line of reasoning described below for the second case. 

Let $N=|B_X(o,r)|$. Let $\mu_N$ be the uniform measure on all three-cycles $(x,y,z)$ with $x,y,z\in B_X(o,r)$.  Regarding $\mu_N$ as a measure on $\mathbb A_N$ (the finite alternating group on $N$ objects), we know that $\mu_N^{(t)}$ converges to $2/(N!)$ as $t$ tends to infinity.
It is well known that this walk can be analyzed in details in a way similar to what was done for the random transposition walk in \cite{DS}.  In particular, it is proved in \cite{Roussel} (see also \cite{Roichman,HSZ}) that there exists a constant $C$ such that, for all $t\ge \frac{2}{3}N(\log N +c)$, $c>0$, 
$$ (N!/2) |\mu_N^{(t)}(\id)- 1|\le  C e^{- 2c} .$$
In particular,
$$\mu_N^{(t_N)}(\id)\le (1+C)\frac{2}{N!},\;\;t_N= \frac{2}{3}N\log N.$$
\begin{lemma} \label{lem-AN}
Fix $\epsilon\in (0,1)$. There exists $N_0$ such that for $v\in (0,(N!)^\epsilon)$ and $N\ge N_0$, we have
$$\Lambda_{1,\mathbb A_N,\mu_N}(v)\ge \Lambda_{2,\mathbb A_N,\mu_N}(v)\ge (1-\epsilon).$$
\end{lemma}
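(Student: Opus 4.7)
The inequality $\Lambda_{1,\mathbb A_N,\mu_N}(v)\ge\Lambda_{2,\mathbb A_N,\mu_N}(v)$ is the left half of the Cheeger-type inequality (\ref{Cheeger}), so the work is in proving $\Lambda_{2,\mathbb A_N,\mu_N}(v)\ge 1-\epsilon$ for $v\le(N!)^\epsilon$ and $N$ large. Given $\Omega\subset\mathbb A_N$ with $|\Omega|\le v$, I will set $T_\Omega=\pi_\Omega P\pi_\Omega$ where $P$ denotes convolution by $\mu_N$ on $\ell^2(\mathbb A_N)$ and $\pi_\Omega$ is the orthogonal projection onto $\ell^2(\Omega)$. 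The definition (\ref{def-eig}) then amounts to $\lambda_{\mu_N}(\Omega)=1-\|T_\Omega\|_{\mathrm{op}}$, so it suffices to show $\|T_\Omega\|_{\mathrm{op}}\le\epsilon$.

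The core estimate I plan to use is the $\ell^{1}\!\to\!\ell^{\infty}$-type bound
\[
\|T_\Omega\|_{\mathrm{op}}^{\,t}\;\le\;|\Omega|\,\max_{g\in\mathbb A_N}\mu_N^{(t)}(g)\qquad(t\ge 1).
\]
To obtain it I take the non-negative Perron-Frobenius eigenfunction $f\ge 0$ of $T_\Omega$ with $\|f\|_2=1$ (so $T_\Omega^{t}f=\|T_\Omega\|^{t}f$), note the pointwise domination $T_\Omega^{t}f\le P^{t}f$ (each inserted factor of $\pi_\Omega$ only deletes non-negative mass relative to a step of $P$), and chain
\[
\|T_\Omega\|^{t}=\langle T_\Omega^{t}f,f\rangle\le\|P^{t}f\|_\infty\|f\|_1\le\max_{g}\mu_N^{(t)}(g)\cdot\|f\|_1^{2}\le|\Omega|\,\max_{g}\mu_N^{(t)}(g),
\]
where the last step uses Cauchy-Schwarz $\|f\|_1\le|\Omega|^{1/2}\|f\|_2$.

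Next I plug in $t=t_N=\tfrac{2}{3}N\log N$ (taken even by adjusting by $1$ if necessary) and apply the Roussel-Roichman mixing bound quoted just before the lemma, together with the standard identity $\max_g\mu_N^{(2k)}(g)\le\mu_N^{(2k)}(\id)$ for symmetric convolutions (a Cauchy-Schwarz argument applied to $\mu_N^{(2k)}=\mu_N^{(k)}\ast\mu_N^{(k)}$), to conclude $\max_g\mu_N^{(t_N)}(g)\le 2(1+C)/N!$. For $|\Omega|\le(N!)^\epsilon$ this gives
\[
\|T_\Omega\|_{\mathrm{op}}\;\le\;\bigl[2(1+C)(N!)^{\epsilon-1}\bigr]^{1/t_N}\;\xrightarrow[N\to\infty]{}\;\exp\!\bigl(-\tfrac{3}{2}(1-\epsilon)\bigr)
\]
by Stirling ($\log(N!)/t_N\to 3/2$), and the right-hand limit is strictly less than $1$ for every fixed $\epsilon<1$, so once $N\ge N_{0}(\epsilon)$ this already forces $\|T_\Omega\|_{\mathrm{op}}\le\epsilon$ in the range of $\epsilon$ where the numerical inequality $e^{-3(1-\epsilon)/2}\le\epsilon$ holds.

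The main obstacle is tightening the asymptotic constant so that the target $1-\epsilon$ is reached for arbitrarily small $\epsilon\in(0,1)$. My plan here is to replace $t_N$ by $t=\tfrac{2}{3}N(\log N+c(\epsilon))$ and use the full Roussel estimate $\mu_N^{(t)}(\id)\le(2/N!)(1+Ce^{-2c(\epsilon)})$: the outer $t$-th root then only changes by a factor $1+o_N(1)$, while the dependence on $\epsilon$ is shifted from the fixed constant $3/2$ into $c(\epsilon)$. Combined with the elementary direct bound $\lambda_{\mu_N}(\Omega)\ge 1-|\Omega|^{2}\max_g\mu_N(g)$ (which handles the very small regime $|\Omega|\le N^{3/2}$, since $\mu_N$ is uniform on a set of $\sim N^{3}/3$ three-cycles), these two regimes glue to give $\Lambda_{2,\mathbb A_N,\mu_N}(v)\ge 1-\epsilon$ uniformly on $(0,(N!)^{\epsilon})$ for every fixed $\epsilon\in(0,1)$ once $N$ is large enough.
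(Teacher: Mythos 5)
Your reduction to the operator $T_\Omega=\pi_\Omega P\pi_\Omega$ and the chain $\|T_\Omega\|_{\mathrm{op}}^{\,t}\le|\Omega|\max_g\mu_N^{(t)}(g)$ are correct ($T_\Omega$ is symmetric with nonnegative entries and vanishing diagonal, so its norm is its top eigenvalue), and up to that point you are running essentially the paper's argument: the paper bounds the Dirichlet eigenvalue $\lambda(\mu_N,U)$ by a trace inequality against $\mu_N^{(t)}(\id)$ at the Roussel time $t_N=\tfrac23N\log N$. In fact your version is the honest discrete-time form of it: the paper writes $\mu_N^{(t)}(\id)\ge\frac1{|U|}e^{-\lambda t}$, which is the continuous-time shape; in discrete time the correct inequality is $\mu_N^{(t)}(\id)\ge\frac1{|U|}(1-\lambda)_+^t$, i.e.\ exactly your bound, and it yields $\lambda\ge 1-e^{-\frac32(1-\epsilon)}+o_N(1)$ for $|U|\le(N!)^\epsilon$. (The paper's displayed intermediate bound $\lambda\ge t_N^{-1}(\log N!-\log|U|-\log 2(1+C))\approx\frac32(1-\epsilon)$ cannot hold as stated, since the vanishing diagonal of $T_\Omega$ forces $\lambda\le1$.) Your computation therefore proves the lemma exactly in the range where $e^{-\frac32(1-\epsilon)}\le\epsilon$, i.e.\ for $\epsilon$ above a threshold near $0.42$; this covers $\epsilon=1/2$, which is the only value the paper uses (in the proof of Proposition \ref{pro-cycles}), so your honest accounting of the constant is a genuine improvement in rigor over the written proof.

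The gap is in your proposed repair for small $\epsilon$, which does not work. Replacing $t_N$ by $t=\tfrac23N(\log N+c(\epsilon))$ does not shift the $\epsilon$-dependence into $c(\epsilon)$: the final bound is governed by $\exp\bigl(-(1-\epsilon)\log(N!)/t\bigr)$, and $\log(N!)/t\to\tfrac32$ for every fixed $c(\epsilon)$ (enlarging $c$, i.e.\ enlarging $t$, only decreases this ratio and weakens the bound), while the refined prefactor $1+Ce^{-2c(\epsilon)}$ is killed by the $t$-th root, so the limit is still exactly $e^{-\frac32(1-\epsilon)}$ and the needed inequality $e^{-\frac32(1-\epsilon)}\le\epsilon$ still fails for small $\epsilon$. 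The auxiliary estimate $\lambda\ge1-|\Omega|^2\max_g\mu_N(g)$ (one can even get $1-|\Omega|\max_g\mu_N(g)$) only covers sets of size polynomial in $N$, on the order of $N^{3}$ at best, which is nowhere near $(N!)^{\epsilon}$; the entire intermediate range $N^{3}\ll|\Omega|\le(N!)^{\epsilon}$ is left uncovered, so the two regimes do not glue. To get the statement for arbitrarily small $\epsilon$ one needs input beyond the mixing-time/return-probability bound at $\tfrac23N\log N$ (for instance finer spectral or character estimates for restricted operators); as it stands, both your argument and the corrected version of the paper's prove the lemma only for $\epsilon$ above the threshold noted above, which fortunately suffices for its use in the paper.
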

\begin{proof} Using the trace formula for the random walk with Dirichlet boundary condition on a set $U\subset \mathbb A_N$, we have 
$$\mu_N^{(t)}(\id) \ge \frac{1}{|U|}\exp(-\lambda(\mu_N,U) t)$$
where 
$$\lambda(\mu_N,U)= \inf\left\{\mathcal E_{\mu_N}(\phi,\phi):  \mbox{support}(\phi) \subset U, \|\phi\|_2=1 \right\}$$
is the lowest eigenvalue of $\delta_{\id}-\mu_N$ in $U$ with Dirichlet boundary condition.  In particular,
$$\lambda(\mu_N,U) \ge t_N^{-1}(\log (N!) - \log |U|  -\log (2(1+C))).$$
Since 
$$\Lambda_{1,\mathbb A_N, \mu_N}(v) \ge \Lambda_{2,\mathbb A_N,\mu_N}(v)=\inf\{\lambda(\mu_N,U): |U|\le v\}$$
the desired result follows. 
\end{proof}
\begin{proof}[Proof of Proposition {\ref{pro-cycles}}]  Consider the two probability measures $\mathbf u$ and $\mu_N$ on the group $\Gamma$.
The hypothesis $|(x,y,z)|_S\le D \max\{d(o,x),d(o,y),d(o,z)\}$  for $x,y,z\in B_X(o,r)$ and a simple comparison technique  imply that
$$\frac{1}{2}\sum_{h,g\in \Gamma}|f(gh)-f(h)|\mu_N(g)\le D r \, \frac{1}{2}\sum_{h,g\in \Gamma}|f(gh)-f(h)|\mathbf u(g).$$
Hence the conclusion of Proposition \ref{pro-cycles} follows readily from the result stated in Lemma \ref{lem-AN} (here we choose $\epsilon=1/2$
in Lemma \ref{lem-AN}).
\end{proof}

\subsection{Example: pocket extensions based on Schreier graphs}
Proposition \ref{pro-cycles} applies easily to the pocket extension based on a Schreier graph.   Please note that a given group $G$ may be defined by any one of its actions on a variety of different Schreier graphs. The finitely generated group $\Gamma$ defined by the rooted labelled Schreier graph $(X^*,E^*,m^*)$ as in Example \ref{exa-P} obtained form a given rooted labelled Schreier graph $(X,E,m)$ that defines $G$ is an object that depends not only on $G$ but on $(X,E,m)$. 

Let $(X,E,m)$ be a rooted labelled Schreier  graph defining a group $G$.
Let $\Gamma$  be the finitely generated group defined by the rooted labelled Schreier graph $(X^*,E^*,m^*)$  (the pocket extension of $(X,E,m)$) 
as defined in Example \ref{exa-P}. 

\begin{corollary}[of Proposition \ref{pro-cycles}]  \label{cor-*S}
Let $V_\bullet$ be the volume growth function of $(X,E,m)$ at the root. Set $V_\bullet^{-1}(t)=\inf\{s: V_\bullet(s)\ge t\}$. Let $\Gamma$ be as above. We have
$$\Lambda_{1,\Gamma}(v) \gtrsim \frac{1}{V_\bullet^{-1}\left( \frac{ \log(1+ v)}{\log(1+\log(1+ v))}\right)},\;\;
\Lambda_{2,\Gamma}(v) \gtrsim \frac{1}{\left[V_\bullet^{-1}\left( \frac{ \log(1+ v)}{\log(1+\log(1+ v))}\right)\right]^2}.$$ 
\end{corollary}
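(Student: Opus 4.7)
The plan is to deduce Corollary~\ref{cor-*S} directly from Proposition~\ref{pro-cycles} applied to the Schreier graph $(X^*,E^*,m^*)$ of $\Gamma$ based at the original root $o$, and then to invert the resulting bound using Stirling's formula. The key algebraic input is that the added generator $\tau=(*,o)$ is an odd permutation of finite support, so by the commutator calculations of Section~\ref{sec-com} the group $\Gamma$ contains not only $\mathbb{A}_0(X^*)$ but the full finitary symmetric group $\mathbb{S}_0(X^*)$. This puts us into the first (stronger) alternative of Proposition~\ref{pro-cycles}.

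First I would verify the transposition length bound $|(o,y)|_T\le D\,d_{X^*}(o,y)$ required by that proposition, where $T$ is the natural generating set of $\Gamma$. For $y=*$ this is immediate since $(o,*)=\tau\in T$. For $y=x\in X\setminus\{o\}$, I pick $g\in G\le\Gamma$ with $g\cdot o=x$ and $|g|_S=d_X(o,x)=d_{X^*}(o,x)$; such a $g$ exists by the very definition of Schreier graph distance. A direct inspection shows $g\tau g^{-1}=(*,x)$, and conjugating once more by $\tau$ yields $\tau g\tau g^{-1}\tau=(o,x)$, a word of length $2d_{X^*}(o,x)+3$ in $T$. Thus the hypothesis of Proposition~\ref{pro-cycles} holds with, say, $D=5$.

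Second, note $|B_{X^*}(o,r)|=V_\bullet(r)+1$ for $r\ge 1$, so Proposition~\ref{pro-cycles} produces a constant $c>0$ with
$$\Lambda_{1,\Gamma,\mathbf u}(v)\ge c/r \qquad\text{for all }\; v<\sqrt{(V_\bullet(r)+1)!\,}.$$
To invert this, set $n=V_\bullet(r)+1$ and use $\log n!\sim n\log n$: the condition $v<\sqrt{n!}$ is asymptotically $n\log n\gtrsim 2\log(1+v)$. It therefore suffices to take $n$ of size $C\log(1+v)/\log(1+\log(1+v))$ for any sufficiently large absolute constant $C$, which corresponds to $r\asymp V_\bullet^{-1}\!\bigl(\log(1+v)/\log(1+\log(1+v))\bigr)$ with a constant factor hidden inside the argument of $V_\bullet^{-1}$. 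This yields the $L^1$ estimate claimed in the corollary. The $L^2$ estimate follows at once from the Cheeger-type inequality $\Lambda_{2,\Gamma}\ge\tfrac{1}{2}\Lambda_{1,\Gamma}^2$ of~(\ref{Cheeger}).

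I do not foresee any serious obstacle. The only mildly delicate point is arranging the constants in the Stirling inversion so that the answer is insensitive to the exact value of $C$, and this is standard given the flexibility built into the equivalence relation $\simeq$.
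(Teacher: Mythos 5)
Your argument is correct and is essentially the proof the paper has in mind: apply Proposition \ref{pro-cycles} in its transposition form to the pocket-extended Schreier graph, obtaining $|(o,x)|_T\lesssim d(o,x)$ from conjugates of $\tau$ along geodesic words exactly as in the star-extension formula (\ref{eq-star}), then invert the factorial bound via Stirling and get the $L^2$ statement from (\ref{Cheeger}), all within the paper's conventions on constants inside the argument. The only cosmetic point is that the containment $\mathbb S_0(X^*)\le\Gamma$ is the observation of Section \ref{sec-3cycles} (or follows directly from your own construction, since the transpositions $(o,x)$ generate $\mathbb S_0(X^*)$) rather than the Cayley-graph commutator computations of Section \ref{sec-com}.
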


When specializing to the case when $(X,E,m)$ is a Cayley graph of $G$ (in which case $\Gamma=G_{\circledast}$ is the pocket extension of $G$),
this result is weaker than the result provided by Theorem \ref{th-beta23} (and Theorem \ref{th-pocket}.  It is sharp only when the isoperimetric profile $\Lambda_{1,G}$ of $G$ 
satisfies $\Lambda_{1,G}(v)\simeq 1/V^{-1}(v)$ where $V$ is the volume growth function of $G$.  On the other hand, the above result apply in much greater generality.

\begin{remark} It is straightforward to generalize Corollary \ref{cor-*S} to the rooted gluing of $\ell$ labelled Schreier graphs. The statement is 
the same with $V_\bullet= \max_{1\le i\le \ell}\{V_i\}$ where each $V_i$ is the rooted volume function on $(X_i,E_i,m_i)$. 
\end{remark}

\subsection{Example: star extensions of a Cayley graph}

Proposition applies nicely to the star extension $\Gamma=(G,S)_\star$ of a labelled Cayley graph $(G,S)$ (see Example \ref{star}).  Indeed, in this case $\Gamma=G\ltimes \mathbb S(G)$  and  if $x= \sigma_1\dots \sigma_\ell$ in $G$, $\sigma_i\in S\cup S^{-1}$ 
then we can write the transposition $(e,x)$ in the form
\begin{equation}\label{eq-star}
(e, x) = \sigma_1 t_{\sigma_1} \dots \sigma_{\ell-2} t_{\sigma_{\ell-2}}\sigma_{\ell-1}  t_{\sigma_\ell} \sigma^{-1}_{\ell-1} t_{\sigma_{\ell-1}}\dots \sigma_2^{-1}t_{\sigma_2}\sigma_1^{-1}t_{\sigma_1}   \end{equation}
where $$t_\sigma= \left\{\begin{array}{cl}  t_i =(e,s_i)& \mbox{ if } \sigma=s_i \\s_i^{-1}t_is_i=(e,s_i^{-1})& \mbox{ if } \sigma=s_i^{-1}
\end{array}\right.$$
This shows that  the transposition $(e,x)$ as length at most $8 \ell$. In other words, if $x$ has length $|x|_S$ in $(G,S)$ then $(e,x)$ has length at most 
$8|x|_S$ in $(\Gamma,T)$ where $T=S\cup \{t_i=(e,s_i): 1\le i\le k\}$.  
\begin{theorem}  \label{th-starlow}
Let $(G,S)$ be a labelled Cayley graph with volume growth function $V$ and $V^{-1}(t)=\inf\{s: V(s)\ge t\}$. Let $\Gamma=(G,S)_\star$ be its the star extension. Then we have
$$\Lambda_{1,\Gamma}(v)\gtrsim \frac{1}{V^{-1}\left(\frac{\log(1+ v) }{\log(1+\log(1+ v))}\right)},\;\;
\Lambda_{2,\Gamma}(v)\gtrsim \frac{1}{\left[V^{-1}\left(\frac{\log(1+ v) }{\log(1+\log(1+ v))}\right)\right]^2}.$$
\end{theorem}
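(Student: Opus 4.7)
The plan is to derive Theorem~\ref{th-starlow} as a direct application of Proposition~\ref{pro-cycles}, taking the Schreier graph $(X,E,m)$ to be the labelled Cayley graph $(G,S)$ itself (with root $o = \id_G$). The subgroup structure $\Gamma = (G,S)_\star = G \ltimes \mathbb{S}_0(G)$ from Example~\ref{star} already guarantees $\mathbb{S}_0(X) \subset \Gamma$, so only the metric hypothesis needs to be checked.

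First, I would verify the transposition length bound required by Proposition~\ref{pro-cycles}. This is precisely what formula~\eqref{eq-star} provides: if $x = \sigma_1 \cdots \sigma_\ell$ is a geodesic word for $x$ in $(G,S)$, then that explicit expression writes the transposition $(\id_G, x)$ as a product of at most $8\ell = 8|x|_S$ generators drawn from $T = S \cup \{t_1, \dots, t_k\}$. Since $d(o,x) = |x|_S$ in the Cayley graph, this gives $|(o,x)|_T \le 8\, d(o,x)$, so the first hypothesis of Proposition~\ref{pro-cycles} holds with $D = 8$. Applying the proposition yields
\begin{equation*}
\Lambda_{1,\Gamma,\mathbf{u}}(v) \;\gtrsim\; \frac{1}{r} \qquad \text{for all } v < \sqrt{V(r)!},
\end{equation*}
where $V(r) = |B_X(o,r)|$ is the volume growth function of $(G,S)$.

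Next, I would invert the relationship between $v$ and $r$. Applying Stirling's formula gives
\begin{equation*}
\log \sqrt{V(r)!} \;\asymp\; V(r)\,\log V(r),
\end{equation*}
so if we choose $r = r(v)$ so that $v \asymp \sqrt{V(r)!}$, then $\log(1+v) \asymp V(r)\log V(r)$, which rearranges (for $v$ large) to
\begin{equation*}
V(r) \;\asymp\; \frac{\log(1+v)}{\log(1+\log(1+v))}, \qquad \text{i.e.,}\qquad r \;\asymp\; V^{-1}\!\left(\frac{\log(1+v)}{\log(1+\log(1+v))}\right).
\end{equation*}
Substituting this $r$ back into the lower bound from Proposition~\ref{pro-cycles} gives the stated $L^1$-isoperimetric estimate. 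For the spectral profile, I would simply invoke the Cheeger-type inequality $\Lambda_{2,\Gamma} \ge \tfrac{1}{2}\Lambda_{1,\Gamma}^2$ from~\eqref{Cheeger} (equivalently, the remark following Proposition~\ref{pro-cycles}) to square the bound.

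There is no real obstacle here; the proof is essentially a bookkeeping exercise combining \eqref{eq-star} with Proposition~\ref{pro-cycles}. The only point that requires mild care is the Stirling inversion, where one must confirm that the asymptotic equivalence $\log(n!) \asymp n \log n$ yields the cleanly written $\log(1+v)/\log(1+\log(1+v))$ form after inverting, and that the comparison is uniform enough that a single constant absorbed into the $\gtrsim$ notation suffices. The use of the uniform measure $\mathbf{u}$ in Proposition~\ref{pro-cycles} presents no issue because, by the notational convention recalled in Section~\ref{sec-iso}, the $\simeq$-class of $\Lambda_{p,\Gamma}$ is independent of the choice of symmetric finitely supported generating measure on $\Gamma$.
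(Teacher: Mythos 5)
Your proposal is correct and follows essentially the same route as the paper: the text preceding Theorem \ref{th-starlow} uses formula (\ref{eq-star}) to establish exactly the bound $|(\id_G,x)|_T\le 8\,|x|_S$ needed for the first case of Proposition \ref{pro-cycles} (applied to the star-extension Schreier graph on the vertex set $G$, whose balls coincide with those of $(G,S)$), and then the stated bounds follow by the same Stirling inversion and Cheeger-type step used for Corollary \ref{cor-*S}. Your bookkeeping, including the reduction from the uniform measure $\mathbf u$ to the $\simeq$-class $\Lambda_{p,\Gamma}$, matches the paper's intended argument.
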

 The next result provides an upper-bound.
 \begin{theorem} \label{th-starup}
  Let $(G,S)$ be a labelled Cayley graph. Let $\Gamma=(G,S)_\star$ be its  star extension.  Then we have
 $$\Lambda_{p,\Gamma}(v) \lesssim    \Lambda_{p,G}\left(  \frac{\log(1+ v)}{\log(1+\log(1+ v))} \right).$$
 \end{theorem}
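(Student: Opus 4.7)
The plan is to mimic the test function construction used in the proof of Theorem~\ref{th-Test1}, exploiting the semidirect product structure $\Gamma = G \ltimes \mathbb{S}_0(G)$ together with the identities governing how the generators $s_i^{\pm 1}$ and $t_i = (\id_G, s_i)$ act on decompositions $\gamma = g\tau$ with $g\in G$, $\tau\in\mathbb{S}_0(G)$. A direct calculation (using the action on the underlying set $G$) gives
\[
s_i\cdot(g,\tau) = (s_ig,\tau), \qquad t_i\cdot(g,\tau) = \bigl(g,\, (g^{-1},\,g^{-1}s_i)\,\tau\bigr).
\]
Thus the generator $s_i$ changes only the $G$-component, while $t_i$ leaves $g$ unchanged and multiplies $\tau$ on the left by the transposition $(g^{-1},g^{-1}s_i)\in\mathbb{S}_0(G)$ whose support depends on $g$.

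Given $s>0$, choose a test function $\phi$ on $G$ with $\operatorname{supp}(\phi)=U$, $|U|\le\Lambda_{p,G}^{-1}(s)$, and $\mathcal{E}_{G,\mu}(\phi,\phi)\le(1+\varepsilon)s\,\|\phi\|_p^p$ (with $\mu$ the uniform symmetric measure on $S$). Set
\[
W = U^{-1}\cup U^{-1}S, \qquad V = \bigl\{\tau\in\mathbb{S}_0(G): \operatorname{supp}(\tau)\subseteq W\bigr\},
\]
so that $|W|\le (k+1)|U|$ and $|V|\le |W|!$. Define the test function on $\Gamma$ by
\[
\psi(g\tau) = \mathbf{1}_V(\tau)\,\phi(g).
\]
The crucial closure property of $V$ is: whenever $g\in U$ and $i\in\{1,\dots,k\}$, the transposition $(g^{-1},g^{-1}s_i)$ has support in $W$, hence
\[
\tau\in V\iff (g^{-1},g^{-1}s_i)\tau\in V.
\]
Consequently, for any $\gamma=g\tau$ with $\phi(g)\ne 0$, the difference $\psi(t_i\gamma)-\psi(\gamma)$ is identically zero, so the transposition generators contribute nothing to the Dirichlet form. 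For the $s_i$ generators, $\psi(s_i\gamma)-\psi(\gamma)=\mathbf{1}_V(\tau)\bigl[\phi(s_ig)-\phi(g)\bigr]$, and summing over $\gamma$ factors as $|V|$ times the corresponding form on $G$. Since $\|\psi\|_p^p = |V|\,\|\phi\|_p^p$, one obtains
\[
\frac{\mathcal{E}_{\Gamma,q}(\psi,\psi)}{\|\psi\|_p^p} \le C\,\frac{\mathcal{E}_{G,\mu}(\phi,\phi)}{\|\phi\|_p^p} \le C(1+\varepsilon)s
\]
for some constant $C$ depending only on the relative weights of $s_i^{\pm 1}$ and $t_i$ in the generating measure $q$ of $\Gamma$.

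The support of $\psi$ has cardinality at most $|U|\cdot|V|\le |U|\cdot\bigl((k+1)|U|\bigr)!$, so Stirling's formula yields $\log|\operatorname{supp}\psi|\asymp r\log r$ where $r=\Lambda_{p,G}^{-1}(s)$. Inverting this relation, the conclusion $\Lambda_{p,\Gamma,q}(v)\le C s$ holds for every $v$ with $\log(1+v)/\log(1+\log(1+v))\gtrsim \Lambda_{p,G}^{-1}(s)$, which is precisely the desired bound
\[
\Lambda_{p,\Gamma}(v)\lesssim \Lambda_{p,G}\!\left(\frac{\log(1+v)}{\log(1+\log(1+v))}\right).
\]
The main (minor) obstacle is simply the bookkeeping: verifying that the set $W$ is large enough that $V$ is automatically invariant under the $g$-dependent transpositions produced by the $t_i$ action whenever $g\in U$, which is the analog of the invariance relations~(\ref{crux})--(\ref{crux'}) in the proof of Theorem~\ref{th-Test1}. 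Once this is in place, the argument for $p=1$ is identical to the one above (replacing squares by absolute values throughout).
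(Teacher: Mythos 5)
Your proposal is correct and follows essentially the same route as the paper's proof: the product test function $\psi(g\tau)=\mathbf 1_V(\tau)\phi(g)$ on $\Gamma=G\ltimes \mathbb S_0(G)$, the observation that $t_i g\tau=g\,(g^{-1},g^{-1}s_i)\tau$ with the conjugated transposition supported in a set controlled by $U$ (so the transposition generators contribute nothing to the Dirichlet form), the factorial support count, and the standard inversion. Your slightly enlarged window $W=U^{-1}\cup U^{-1}S$ and the explicit definition of $V$ as permutations supported in $W$ only tidy up the paper's bookkeeping and change nothing in the estimate.
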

 \begin{corollary} [of Theorems \ref{th-starlow}-\ref{th-starup}] The star extension $\Gamma=(G,S)_\star$ of any Cayley graph $(G,S)$ of a polycyclic group $G$ satisfies
 $$\Lambda_{p,\Gamma}(v) \simeq    \Lambda_{p,G}\left(  \frac{\log (1+v)}{\log(1+\log(1+ v))} \right),\;p=1,2.$$
  \end{corollary}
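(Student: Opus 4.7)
The plan is to show that the corollary is immediate from the two preceding theorems once we recall a classical fact about polycyclic groups: their $L^p$-isoperimetric profile is governed by the inverse of the volume growth. Specifically, I will use that for any polycyclic group $G$ with volume growth function $V$, one has
\[
\Lambda_{1,G}(v)\simeq \frac{1}{V^{-1}(v)} \quad\text{and}\quad \Lambda_{2,G}(v)\simeq \frac{1}{[V^{-1}(v)]^2}.
\]
This follows by inspecting the two possible behaviors recalled in the Introduction. If $G$ has polynomial growth of degree $d$, then $V(r)\simeq (1+r)^d$, so $V^{-1}(v)\simeq (1+v)^{1/d}$, while the classical computation gives $\Lambda_{1,G}(v)^2\simeq \Lambda_{2,G}(v)\simeq (1+v)^{-2/d}$, which matches. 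If $G$ is polycyclic with exponential growth, then $V(r)\simeq e^r$, so $V^{-1}(v)\simeq \log(1+v)$, and $\Lambda_{1,G}(v)^2\simeq \Lambda_{2,G}(v)\simeq [\log(1+v)]^{-2}$, which again matches.

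Granting the above equivalence, I will combine Theorem~\ref{th-starlow} and Theorem~\ref{th-starup} as follows. Write $w(v)=\log(1+v)/\log(1+\log(1+v))$ for the compression factor that appears in both theorems. Theorem~\ref{th-starup} gives the upper bound
\[
\Lambda_{p,\Gamma}(v) \lesssim \Lambda_{p,G}(w(v)), \qquad p=1,2.
\]
For the lower bound, Theorem~\ref{th-starlow} yields $\Lambda_{p,\Gamma}(v)\gtrsim 1/[V^{-1}(w(v))]^p$. Using the polycyclic equivalence applied with argument $w(v)$ in place of $v$, the right-hand side is equivalent to $\Lambda_{p,G}(w(v))$. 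Chaining the two inequalities produces the desired $\simeq$ relation.

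The only non-routine ingredient is the polycyclic equivalence $\Lambda_{p,G}(v)\simeq 1/[V^{-1}(v)]^p$, but this is a well-known fact established in the same references that underlie the dichotomy stated in Section~2 (see, e.g., the description in \cite{Pittet2002,Tessera}), and it is the reason the corollary is pitched precisely at polycyclic base groups: for more general amenable $G$, the profile $\Lambda_{p,G}$ is not in general comparable to $1/[V^{-1}]^p$, so the lower and upper bounds of Theorems~\ref{th-starlow}--\ref{th-starup} need not match. I do not expect any real obstacle in writing this up; the entire argument is the one-line substitution $v\mapsto w(v)$ into the polycyclic profile identity, combined with the two theorems already proved.
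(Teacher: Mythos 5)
Your argument is correct and is exactly the one the paper intends (the corollary is stated without proof as an immediate consequence of Theorems \ref{th-starlow}--\ref{th-starup}): for polycyclic $G$ the classical dichotomy recalled in the introduction gives $\Lambda_{p,G}(v)\simeq 1/[V^{-1}(v)]^p$, so the lower bound of Theorem \ref{th-starlow} matches the upper bound of Theorem \ref{th-starup} after the substitution $v\mapsto \log(1+v)/\log(1+\log(1+v))$. This is precisely the sharpness mechanism the paper itself points out in the remark following Corollary \ref{cor-*S}.
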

\begin{proof}[Proof of Theorem \ref{th-starup}] To estimate $\Lambda_{p,\Gamma}$, we pick the finitely supported measure $\nu=\frac{1}{2}(\nu_1+\nu_2)$ where $\nu_1$ is the uniform
measure on $S$, the generating set of $G$ viewed as a subgroup of $\Gamma$, and $\nu_2$ is the uniform measure on the $k$ transpositions $t_i=(e,s_i)$
also viewed as elements in $\Gamma$. 
Since $\Gamma=G\times \mathbb S(G)$, we can try to use a test function of the form
$$\psi(\gamma)= \mathbf 1_{V}(\tau)\phi(g),\;\;\gamma=(g,\tau).$$ 
We pick $\phi$ to be a good test function for $\Lambda_{p,G}(v)$ so that  $U=\mbox{support}(\phi)$ in $G$ has size at most $v$  and
$$\frac{1}{2|S|} \sum_{g\in G,s\in S}|\phi(sg)-\phi(g)|^2 \le  \eta \sum_{g\in G}|\phi(g)|^2,\;\; \eta=2\Lambda_{p,G}(v).$$
We then pick 
$$V= \bigcup_{s\in S} U^{-1}s.$$   We give the details in the case $p=2$ (the case $p=1$ is very similar). Write
\begin{eqnarray*}\mathcal E_\nu(\psi,\psi)&=&\frac{1}{2}\sum_{\gamma,z\in \Gamma}|\psi(z\gamma)-\psi(\gamma)|^2\nu(z)\\
&=& \frac{1}{4}\sum_{\gamma,z\in \Gamma}|\psi(z\gamma)-\psi(\gamma)|^2\nu_1(z) +\frac{1}{4}\sum_{\gamma,z\in \Gamma}|\psi(z\gamma)-\psi(\gamma)|^2\nu_2(z) .\end{eqnarray*}
The first term in this sum is obviously equals to
$$  \frac{|V|}{4|S|}\sum_{g\in G,s\in S}|\phi(sg)-\phi(g)|^2 $$
which bounded below by $ \frac{\eta |V|}{2}\|\phi\|_2^2= (\eta/2) \|\psi\|_2^2$.
For the remaining term, write
\begin{eqnarray*}
\frac{1}{4}\sum_{\gamma,z\in \Gamma}|\psi(z\gamma)-\psi(\gamma)|^2\nu_2(z) &=&
\frac{1}{4k}\sum_{g;\tau; 1\le i\le k}|\mathbf 1_V(g^{-1}t_i g \tau)-\mathbf 1_V(\tau)|^2|\phi(g)|^2\end{eqnarray*}
Note that  for $ g^{-1}t_i g$ is the transposition $ (g^{-1},g^{-1}s_i)$. It follows that when $g\in U=\mbox{support}(\phi)$,  $g^{-1}t_ig\tau$ and $\tau$  are supported  either  both in $V$  or both in $G\setminus V$. This means that   $\frac{1}{4}\sum_{\gamma,z\in \Gamma}|\psi(z\gamma)-\psi(\gamma)|^2\nu_2(z) =0$.   Thus we have found a function $\psi$ such that
$$\mathcal E_\nu(\psi,\psi) \le \frac{\eta}{2} \|\psi\|_2^2$$
and which has a support of size at most    $ (kv)!$. This yields the desired result.
\end{proof}

\begin{remark}The upper bound on $\Lambda_{p,\Gamma}$, $\Gamma=(G,S)_\star$,
can also be obtained indirectly by noting that $\Gamma$ is a subgroup of $G_{\circledast}$ and using Theorem \ref{th-pocket}. It is to be noted that Theorem
\ref{th-pocket} allows for starting with an arbitrary symmetric probability measure on $G$ but that the results concerning the star extension $(G,S)_\star$ are obtained only  for
finitely supported  symmetric probability measures on $G$.
\end{remark}

\section{Pocket extension of Schreier graphs: the case of bubble groups}

\subsection{Bubble groups } \setcounter{equation}{0}
\label{sec:Explicit-estimates}\label{sec-bubble}
The family of the so-called bubble groups was considered in \cite{Kotowski2014}.  See also \cite{AmirKozma,SCZ-Rev}.
   
\begin{figure}[h]
\begin{center}\caption{A piece of the labelled Schreier graph of an infinite 
bubble group with $\mathbf a= (a_1,a_2,\dots), \mathbf b=(3,3,3\dots)$. }\label{B2}

\begin{picture}(340,100)(0,0) 

\put(25,50){\oval(170,15)[r]}
\put(120,50){\circle{20}}
\put(110,50){\circle*{3}}
\multiput(127,56.5)(0,-13){2}{\circle*{3}}
\multiput(105,57.5)(-20,0){5}{\circle*{3}}
\multiput(105,42.5)(-20,0){5}{\circle*{3}}
\multiput(105,60.5)(-20,0){5}{\circle{6}}
\multiput(105,39.5)(-20,0){5}{\circle{6}}

\multiput(30,57.5)(-10,0){3}{\line(-1,0){6}}
\multiput(30,42.5)(-10,0){3}{\line(-1,0){6}}

\put(225,64){\oval(200,15)}
\multiput(127,71.5)(19.4,0){11}{\circle*{3}}
\multiput(127,74.5)(19.4,0){10}{\circle{6}}
\multiput(146.4,56.5)(19.4,0){10}{\circle*{3}}
\multiput(146.4,53.5)(19.4,0){10}{\circle{6}}
\put(329.3,79){\oval(20,20)[bl]} 

\put(333,82){\oval(20,20)[bl]}
\put(323,82){\vector(0,1){2}}
\put(328,82){\makebox(0,0){$b$}}

\put(225,36){\oval(200,15)}
\multiput(146,42.5)(19.4,0){10}{\circle*{3}}
\multiput(146,45.5)(19.4,0){10}{\circle{6}}
\multiput(127.4,28)(19.4,0){11}{\circle*{3}}
\multiput(127.4,25)(19.4,0){10}{\circle{6}}
\put(330,21){\oval(18,18)[tl]}

\put(333,18){\oval(18,18)[tl]}
\put(333,27){\vector(1,0){2}}
\put(328,20){\makebox(0,0){$b$}}

\put(95,63){\makebox(0,0){$a$}}
\put(90,59.5){\vector(1,0){10}}

\put(156,22){\makebox(0,0){$a$}}
\put(161,26){\vector(-1,0){10}}

\put(111.5,58.5){\put(3,6.5){\makebox(0,0){$b$}}
\put(0,0){\vector(2,1){10}}}

\put(154,77){\makebox(0,0){$a$}}
\put(152,74){\vector(1,0){9}}
\put(186,82){\makebox(0,0){$b$}}

\end{picture}\end{center}\end{figure}
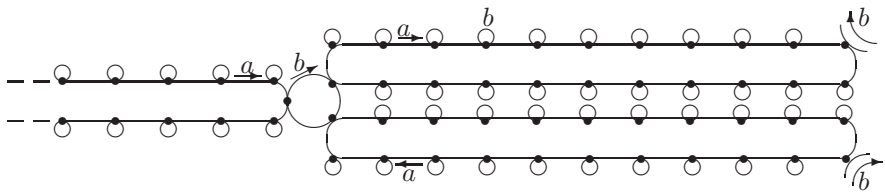

Let $\mathbf{a}=(a_{1},a_{2},...)$ and $\mathbf{b}=\left(b_{1},b_{2},..\right)$ 
be two natural integer infinite sequences. 
The ``bubble group'' 
$\Gamma_{\mathbf a,\mathbf b}$ is associated with the tree like bubble graph 
$X_{\mathbf a,\mathbf b}$ 
were $X_{\mathbf a,\mathbf b}$ is obtained from the 
rooted tree $\mathbf T_{\mathbf b}$ 
with forward degree sequence $(1,b_1-1,b_2-1,\dots)$  as follows. 
Each edge at level $k\ge 1$ in the tree (we make the convention that
the level of an edge is the level of the child on that edge)
is replaced by a cycle of length $2a_k$ called a bubble. 
Each vertex at level $k\ge 1$ (we ignore the root which is now part of a circle 
of length $2a_1$) is blown-up 
to a $b_k$-cycle with each vertex of this cycle inheriting one of the 
associated $2a_{k+1}$-cycle. These $b_k$-cycles are called branching cycles. 
Finally, 
at each vertex which belong only to a bubble (but not to a branching cycle), 
we add a self loop.  The vertex set of the graph $X_{\mathbf a,\mathbf b}$ 
can be parametrized  using pairs $(w,u)$ with $w$ a finite word in 
$$\{\emptyset\} \cup (
\cup _{k=1}^\infty \{1,\dots,b_1-1\}\times\{1,\dots,b_2-1\}\times \cdots \times \{1,,\dots,b_k-1\})$$
and $u\in \{0,\dots,2a_{k+1}-1\}$ if $|w|=k$.  By definition, the vertex 
$\mathfrak o=\emptyset$ is the root. 

In the graph  $X_{\mathbf a,\mathbf b}$, we call ``level k'' 
the set of all the vertices $(w,u)$ with $|w|=k-1$, $0\le u\le 2a_{k}-1$.
If all the $a_k$ are distinct, this is the set of all vertices that 
belong to a bubble of length $2a_k$.  We say that a branching cycle
is at ``level k'' if it is attached  at the far end (i.e., 
furthest away from $o$) of a level-$k$ bubble.
Note that the vertices of any branching cycle at level $k$ are 
parametrized as follows:  \begin{itemize}
\item
$(w', a_k)$ with $|w'|=k-1$ for the vertex closest to the root $o$, a vertex 
which also belongs  to a level-$k$ bubble,
\item
$(w'z,0)$ with $z\in \{1,\dots, b_k-1\}$ for the other vertices on 
that branching cycle, each of which also belongs to a level-$(k+1)$ bubble.
\end{itemize}
  We let 
$$\mathfrak b(w')=\{(w',a_k),(w'1,0),\dots,(w'(b_k-1),0)\}$$  
denote the branching cycle at $(w',a_k)$. 

Having chosen an orientation along each cycle (say, clockwise), we label each 
edge of the bubble with the letter $a$ and each edge of the branching 
cycle with the letter $b$.

The group $\Gamma_{\mathbf a,\mathbf b}$ is a subgroup of 
the (full) permutation group of the vertex set of $X_{\mathbf a,\mathbf b}$ 
generated 
by two elements $\alpha$ and $\beta$.  Informally, $\alpha$ rotates the 
bubbles whereas $\beta$ rotates the branching cycles.  Formally, the action of 
the permutation $\alpha$ (resp. $\beta$) on any vertex $x$ in 
$X_{\mathbf a,\mathbf b}$ 
is indicated by the oriented labeled edge at $x$ labelled with an $a$ 
(resp. a $b$). Obviously, we can replace the edge labels $a, b$ with the 
group elements $\alpha,\beta$, once these are defined.   

These groups are somewhat mysterious.  We know they have exponential volume growth when all $b_i$ are at least $3$, that the groups $\Gamma_{\mathbf a,\mathbf b}$ are non-amenable when both sequence $\mathbf a,\mathbf b$ are bounded and that they are amenable when $\liminf a_i=\infty$.
Typically, their isoperimetric and spectral profiles are not precisely known (modulo the usual equivalence relation $\simeq$).  A more detailed description is given in \cite{SCZ-Rev} and also in \cite{AmirKozma}, especially the appendix of this paper written by Nicolas Matte Bon. 
Here we will focus in the case when the sequence $\mathbf b=(3,3,3,\dots)$ and the sequence $a_i$ is strictly increasing. The main case 
of interest for us is when $a_i\approx 2^{\kappa i}$ for some fixed parameter $\kappa\in (0,\infty)$.  
Because we only consider the case $\mathbf b=(3,3,3,\dots)$,
we will use the simplified notation $X_{\mathbf a},\Gamma_{\mathbf a}$.  We equip $\Gamma_\mathbf a$ with the symmetric probability measure 
$\mathbf u$ which is the
uniform measure on $\{\alpha^{\pm1},\beta^{\pm 1}\}$ and with the associated Dirichlet form $\mathcal E_{\mathbf u}$ (multiplication on left).

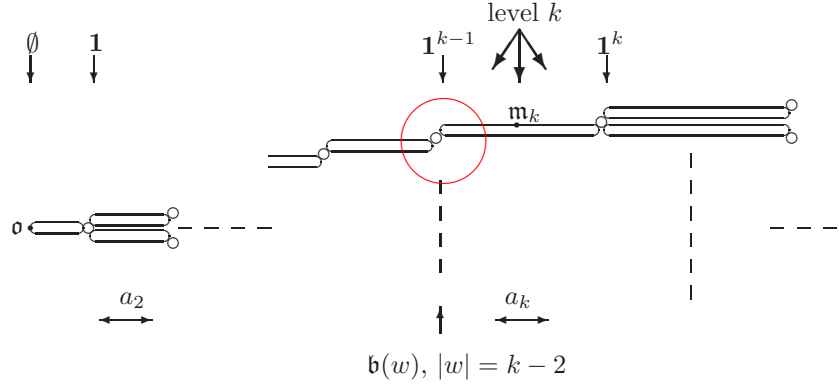
\begin{figure}[h]
\begin{center}\caption{Sketch 
of the Schreier graph $X$: levels, $\mathfrak b(w)$, $\mathfrak m_k$.
Details of the red circle region are shown in Figure \ref{BW2}.}\label{BW0}

\begin{picture}(320,150)(0,0) 

\put(10,0){\put(-5,50){\makebox(0,0){$\mathfrak o$}}
\put(0,50){\circle*{2}}
\put(10,50){\oval(20,4.2)}
\put(22,50){\circle{4}}

\put(37.5,53){\oval(30,4.2)}
\put(37.5,47){\oval(30,4.2)}
\put(54,55.5){\circle{4}}
\put(54,44.2){\circle{4}}

\multiput(56,50)(10,0){4}{\line(1,0){5}} 

\put(24,115){\vector(0,-1){10} 
\makebox(-5,10){$\mathbf 1$}}

\put(0,115){\vector(0,-1){10} 
\makebox(-5,10){$\emptyset$}}

\put(30,30){\makebox(17,-15){$a_2$} 
\put(-11,-15){\vector(1,0){10}} \put(-11,-15){\vector(-1,0){10}} 
}
}

\put(100,75){\put(0,0){\oval(40,4.2)[r]}
\put(21,3){\circle{4}}
}

\put(142,81){\put(0,0){\oval(40,4.2)}
\put(21.5,3.05){\circle{4}}
}

\multiput(165,68)(0,-10){4}{\line(0,-1){5}} 
\put(160,-5){\makebox(30,5){$\mathfrak b(w)$, $|w|=k-2$}} 
\put(165,10){\vector(0,1){10} 
}

\put(195,87){\put(0,0){\oval(60,4.2)}
\put(31,3){\circle{4}}}

\put(262,92.5){\put(0,1){\oval(70,4.2)}
\put(36,4){\circle{4}}
\put(0,-5.5){\oval(70,4.2)}
\put(36,-8.5){\circle{4}}

}

\multiput(260,78)(0,-10){6}{\line(0,-1){5}} 

\multiput(290,50)(10,0){3}{\line(1,0){5}}

\put(166,115){\vector(0,-1){10} 
\makebox(-2,10){$\mathbf 1^{k-1}$}}

\put(173,30){\makebox(43,-15){$a_k$} 
\put(-20,-15){\vector(1,0){10}} \put(-20,-15){\vector(-1,0){10}} 
}
\put(228,115){\vector(0,-1){10} 
\makebox(-4,10){$\mathbf 1^{k}$}}

\put(195,91){\makebox(4,4){$\mathfrak m_k$}}
\put(194,89){\circle*{2}}

\put(195,130){\makebox(4,4){level $k$}}
{\thicklines \put(195,125){\vector(2,-3){10}}
\put(195,125){\vector(0,-1){20}}
\put(195,125){\vector(-2,-3){10}}
}

{\color{red}  \put(163,83){\circle{30}}}

\end{picture}\end{center}\end{figure}

\begin{figure}[h]
\begin{center}\caption{Sketch 
showing 
$\color{red}\mathfrak N(\mathbf 1^{k-2},r)$,
$\mathfrak m_k$}\label{BW2}

\begin{picture}(320,130)(0,0)

\put(150,70){\circle{20}}
\multiput(140.5,66.5)(-10.6,0){10}{\circle*{3}}
\multiput(140.5,56.5)(-10.6,0){10}{\circle*{3}}
{\color{red}\multiput(140.5,66.5)(-10.6,0){3}{\circle{3.5}}
\multiput(140.5,56.5)(-10.6,0){3}{\circle{3.5}}}

\multiput(155.5,79.5)(10.6,0){12}{\circle*{3}}
\multiput(155.5,89.5)(10.6,0){12}{\circle*{3}}
{\color{red}\multiput(155.5,79.5)(10.6,0){3}{\circle{3.5}}
\multiput(155.5,89.5)(10.6,0){3}{\circle{3.5}}}

\multiput(157.5,62.5)(10.6,0){12}{\circle*{3}}
\multiput(157.5,52.5)(10.6,0){12}{\circle*{3}}
{\color{red} \multiput(157.5,62.5)(10.6,0){3}{\circle{3.5}}
\multiput(157.5,52.5)(10.6,0){3}{\circle{3.5}}}

\put(92,62){\oval(100,10)}
\put(216.5,58){\oval(120,10)}
\put(215,84){\oval(120,10)}

\put(96,81){\makebox(30,10){$(\mathbf 1^{k-2},a_{k-1})$}}
\put(127,80){\vector(1,-1){10}}

\put(108,96){\makebox(30,10){$(\mathbf 1^{k-1},0)$}}
\put(137,95){\vector(1,-1){13}}

\put(108,33){\makebox(30,10){$(\mathbf 1^{k-2}\mathbf 0,0)$}}
\put(141.5,46){\vector(1,1){12}}

\put(0,33){\makebox(30,10){$(\mathbf 1^{k-2},0)$}}
\put(33,44){\vector(1,1){10}}

\put(120.6,23.5){
\put(53,73){\makebox(30,10){$\mathfrak m_k$}}
\put(74,74){\vector(2,-1){10}}
{\color{blue}\put(87.6,66){\circle{5}}}}

\put(285,105){\makebox(30,10){$(\mathbf 1^{k-1},a_k)$}}
\put(285,104){\vector(-1,-1){10}}

\end{picture}\end{center}\end{figure}
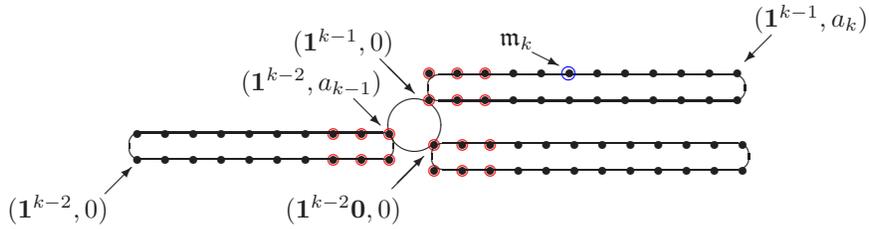

For simplicity of notation, we assume throughout that each entry $a_i$ of the sequence $\mathbf a$ is divisible by $4$.  Set
$$\mathfrak N (w,r)=\{x\in X: d(x,\mathfrak b(w)) \le r\},\;w\in 
\{1,2\}^{(\infty)},\;r>0 .$$
For any $k\le j$, $w$ of length $|w|=j$ 
and $0\le r \le a_{k-1}-1$, we have an obvious bijective map
$$\iota^w_k: \mathfrak N (w,r)\mapsto \mathfrak N(\mathbf 1^{k-1},r)$$ 
which can be used to identify these vertex sets. For a given level $k$, we set 
$$\mathfrak{m}_k=(\mathbf 1^{k-1},a_{k}/2),\;\;B_k(l)=\left\{ x\in X:\ d\left(x,\mathfrak{m}_k\right)\le 
l\right\}, \;0\le l\le (a_{k}/2)-1.$$

\subsection{Construction of test functions on $\Gamma_{\mathbf a}$ and the pocket extension of its Schreier graph}

\begin{definition}
For each $k\ge 8$ and $\ell \in (0, (a_k/2)-1)$, consider the set $U_k(\ell)$ of all elements $g\in \Gamma_{\mathbf a}$ such that
there exists a sequence $\gamma_1,\dots \gamma_q\in \{\alpha^{\pm 1},\beta^{\pm 1}\}$ such that $g=\gamma_q\dots \gamma_1 $
and, for all $1\le j\le q$,  $\gamma_j\dots\gamma_1\mathfrak m_k \in B_k( \ell)$. 
For any $s\in \{0,1,\dots, \ell_k\}$, with $\ell_k = (a_k/4)-1 $,  define $\psi_{k}$ on $\Gamma_{\mathbf a}$ by
$$\psi_k(g)=\left\{ \begin{array}{cl} 0  &\mbox{ if } g\not\in U_k(\ell_k)\\
(1- s/\ell_k)_+ & \mbox{ if } g\in U_k(\ell_k) \mbox{ and }  d(\mathfrak m_k,g\mathfrak m_k)=s.\end{array}\right.$$ 
\end{definition}

\begin{lemma} \label{lem-bubbletest}
The function $\psi_k$ satisfies
$$\|\psi_k\|_2^2= \frac{2\ell_k+ 2+ 1/\ell_k}{3(2\ell_k+1)}|U_k(\ell_k)| \mbox{ and }  \mathcal E (\psi_k,\psi_k) =  \frac{1}{2\ell_k(2\ell_k+1)}|U_k(\ell_k)|.$$
In particular
$$\frac{\mathcal E_{\mathbf u}(\psi_k,\psi_k)}{\|\psi_k\|_2^2}\le \frac{3}{2\ell_k^2}.$$
\end{lemma}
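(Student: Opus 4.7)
The plan is to exploit the fact that $\psi_k(g)$ depends, for $g\in U_k(\ell_k)$, only on the image $g\mathfrak{m}_k\in B_k(\ell_k)$, together with the simple geometry of the ball $B_k(\ell_k)$. Because $\ell_k=a_k/4-1<a_k/2$, the ball is an arc of $2\ell_k+1$ consecutive vertices inside the bubble at level $k$, strictly separated from the two branching cycles at the ends of that bubble. As a consequence, $\beta^{\pm 1}$ is a self-loop at every vertex of $B_k(\ell_k)$, and $\alpha^{\pm 1}$ acts as a single step along the arc.

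I would begin by partitioning $U_k(\ell_k)$ into the fibers $U_k(\ell_k,x)=\{g\in U_k(\ell_k):g\mathfrak{m}_k=x\}$, $x\in B_k(\ell_k)$, and proving that every fiber has the same cardinality $M:=|U_k(\ell_k)|/(2\ell_k+1)$. This reduces to a bijection argument: for any $x,x'\in B_k(\ell_k)$ pick a word $w$ in $\alpha^{\pm 1}$ travelling from $x$ to $x'$ along the arc $B_k(\ell_k)$; then left multiplication $g\mapsto wg$ carries $U_k(\ell_k,x)$ onto $U_k(\ell_k,x')$, because the path witnessing $g\in U_k(\ell_k)$, extended by the letters of $w$, keeps the orbit of $\mathfrak{m}_k$ inside $B_k(\ell_k)$ throughout. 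Grouping terms by the value of $g\mathfrak{m}_k$ and using the trivial count of $\#\{x\in B_k(\ell_k):d(\mathfrak{m}_k,x)=s\}$ collapses $\|\psi_k\|_2^2$ to $M$ times the elementary finite sum $\sum_{s=0}^{\ell_k}N_s(1-s/\ell_k)^2$, which yields the stated closed form.

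Next I would compute $\mathcal{E}_{\mathbf u}(\psi_k,\psi_k)$. The $\beta^{\pm 1}$ generators contribute nothing: if $g\in U_k(\ell_k)$ then $\beta^{\pm 1}g\mathfrak{m}_k=g\mathfrak{m}_k$, so $\beta^{\pm 1}g\in U_k(\ell_k)$ with the same fiber and $\psi_k(\beta^{\pm 1}g)=\psi_k(g)$; the remaining case $g\notin U_k(\ell_k)$ but $\beta^{\pm 1}g\in U_k(\ell_k)$ is ruled out because $\beta^{\pm 1}$ fixes $g\mathfrak{m}_k$ and one could extend the witnessing path of $\beta^{\pm 1}g$ backwards by $\beta^{\mp 1}$ to get $g\in U_k(\ell_k)$. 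For $\alpha^{\pm 1}$ and $g\in U_k(\ell_k)$ with $d(\mathfrak{m}_k,g\mathfrak{m}_k)=s$, the points $\alpha^{\pm 1}g\mathfrak{m}_k$ are the two bubble-neighbors of $g\mathfrak{m}_k$ at distances $s\pm 1$; each contributes $(1/\ell_k)^2$ to $|\psi_k(\alpha^{\pm 1}g)-\psi_k(g)|^2$ as long as the target is still in $B_k(\ell_k)$, and zero otherwise because at $s=\ell_k$ the outward step either leaves $U_k(\ell_k)$ or lands on another point of distance $\ell_k$ and in either case $\psi_k$ vanishes at both endpoints. Tallying shell by shell gives $2M/\ell_k^2$ from $s=0$, $4M/\ell_k^2$ from each shell $1\le s\le\ell_k-1$, and $2M/\ell_k^2$ from $s=\ell_k$; dividing the total by $2\cdot|\mathrm{supp}(\mathbf u)|=8$ produces the stated value of $\mathcal{E}_{\mathbf u}(\psi_k,\psi_k)$.

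The ratio bound $3/(2\ell_k^2)$ is then immediate from the two closed forms by elementary algebra. The only delicate point in the plan is the boundary bookkeeping for the Dirichlet form: I must verify that every pair $(g,hg)$ with exactly one endpoint in $U_k(\ell_k)$ contributes zero, which amounts to showing that such a transition forces $d(\mathfrak{m}_k,g\mathfrak{m}_k)=\ell_k+1$ and $d(\mathfrak{m}_k,hg\mathfrak{m}_k)=\ell_k$ (by the triangle inequality together with the path-extension argument, which says any $g$ with $g\mathfrak{m}_k\in B_k(\ell_k)$ and $hg\in U_k(\ell_k)$ must itself lie in $U_k(\ell_k)$), so that $\psi_k$ indeed vanishes at both endpoints.
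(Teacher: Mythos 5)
Your proposal is correct and follows essentially the same route as the paper's proof: your equal-cardinality fibers over the points of $B_k(\ell_k)$ are exactly the paper's sets $U_k(\ell_k,t)$ with $\alpha^{-t}U_k(\ell_k,t)=U_k(\ell_k,0)$, and your shell-by-shell tally of the $\alpha^{\pm 1}$ contributions, together with the explicit verification that the $\beta^{\pm 1}$ and boundary terms vanish (which the paper leaves implicit), is just a more carefully documented version of the paper's computation of $8\mathcal E_{\mathbf u}(\psi_k,\psi_k)$. One cosmetic remark: the elementary sum $\sum_{s}N_s(1-s/\ell_k)^2$ equals $\tfrac{1}{3}\bigl(2\ell_k+1/\ell_k\bigr)$, so your (correct) computation of $\|\psi_k\|_2^2$ yields a constant differing from the $\tfrac{1}{3}\bigl(2\ell_k+2+1/\ell_k\bigr)$ stated in the lemma by a harmless additive slip in the paper itself, which affects neither your argument nor the ratio bound $3/(2\ell_k^2)$.
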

\begin{proof} The sets $U_{k}(\ell_k, t)=\{g\in U_{k}(\ell_k): g\mathfrak m_k =\alpha^{t}\mathfrak m_k\}$, $t\in \{0, \pm 1,\dots, \pm \ell_k\}$,
form a partition of $U_k(\ell_k)$ and they all have the same cardinality because one can check that $\alpha ^{-t} U_k(\ell_k,t)=U_k(\ell_k,0)$. 
It follows that 
\begin{eqnarray*}
\|\psi_k\|_2^2&=&|U_k(\ell_k,0)|\sum_{-\ell_k\le t\le \ell_k} (1-t/\ell_k)^2\\& = &\frac{1}{3}(2\ell_k+ 2+ 1/\ell_k)|U_k(\ell_k,0)
=\frac{2\ell_k+ 2+ 1/\ell_k}{3(2\ell_k+1)}|U_k(\ell_k)|,\end{eqnarray*}
and 
\begin{eqnarray*}
8\mathcal E_{\mathbf u}(\psi_k,\psi_k)&=& \sum_{g}(|\psi_k(\alpha g)-\psi_k(g)|^2 +\psi(\alpha^{-1} g)-\psi_k(g)|^2 )\\
&=& \sum _{-\ell_k\le t<\ell_k} (1/\ell_k)^2 |U_k(\ell_k,0)| +  \sum _{-\ell_k< t\le \ell_k} (1/\ell_k)^2|U_k(\ell_k,0)|\\
&=& \frac{4}{\ell_k}|U_k(\ell_k,0)|= \frac{4}{\ell_k(2\ell_k+1) }|U_k(\ell_k)|. \end{eqnarray*}  
\end{proof}

In order to use Lemma \ref{lem-bubbletest} to estimate the spectral profile, we need to estimate from above the size of the support of $\psi_k$.
Set $s_{k}=\sum_1^{k} a_j$

\begin{lemma} \label{lem-testsupport}
For any $k\ge 8$, any element $g \in U_k(\ell_k,0)\subset \Gamma_{\mathbf a}$, viewed as an element of $\mathbb S(X_{\mathbf a})$,
is the product of permutations  supported in the disjoint  finite sets $B(\mathfrak o, s_{k-1}+\ell_k)$  and $\mathfrak N_j(w,\ell_k)$, $|w|\ge k $. Moreover,
The factor supported in  $\mathfrak N(\mathbf 1^k,\ell_k)$ determines uniquely all the factors supported one each $\mathfrak N(\mathbf 1^k,\ell_k)$, $|w|\ge k$, via the isomorphisms  $\iota_{k+1}^w: \mathfrak N(w,\ell_k)\ra \mathfrak N(\mathbf 1^k,\ell_k)$.  In particular,
$$|U_k(\ell_k,0)|\le (|B(\mathfrak o, s_{k-1}+\ell_k)|!) \times (|\mathfrak N(\mathbf 1^k,\ell_k)|!).$$
 \end{lemma}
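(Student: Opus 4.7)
The plan is to analyze the word representation $g=\gamma_q\cdots\gamma_1$ of an element $g\in U_k(\ell_k,0)$ and to show that $g$, viewed as a permutation of $X_{\mathbf a}$, has a product structure compatible with the stated decomposition. I will first track the net $\alpha$-count along the word, then deduce the support property, and finally establish the synchronization across the deep neighborhoods.

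First I introduce the net count $n_\alpha^{(j)}=\sum_{i\le j}\epsilon_i\mathbf 1_{\{\gamma_i\in\{\alpha^{\pm 1}\}\}}$, where $\epsilon_i=\pm 1$ records the sign of an $\alpha$-letter. Because $\mathfrak m_k$ is an interior vertex of the level-$k$ bubble and hence carries $\beta$-self-loops, the moves $\beta^{\pm 1}$ fix $\mathfrak m_k$, so the tracker's position at time $j$ equals $\alpha^{n_\alpha^{(j)}}\mathfrak m_k$. The hypothesis that this trajectory stays in $B_k(\ell_k)$ forces $|n_\alpha^{(j)}|\le\ell_k$ for every $j$, and the condition $g\mathfrak m_k=\mathfrak m_k$ (built into $U_k(\ell_k,0)$) forces $n_\alpha^{(q)}=0$.

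Next I analyze an arbitrary vertex $y\in X_{\mathbf a}$. If $y$ sits in the deep interior of some bubble, at distance strictly greater than $\ell_k$ from every vertex of either adjacent branching cycle, then an induction on $j$ shows that $\gamma_j\cdots\gamma_1 y$ remains interior to $y$'s bubble throughout: $\beta^{\pm 1}$ moves fix any such interior vertex, so the position after step $j$ is $\alpha^{n_\alpha^{(j)}}y$, which stays interior thanks to $|n_\alpha^{(j)}|\le\ell_k$. Since $n_\alpha^{(q)}=0$, $y$ is fixed by $g$. Hence $\mathrm{supp}(g)\subset\bigcup_w\mathfrak N(w,\ell_k)$, the union over all branching cycles of $X_{\mathbf a}$. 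I then absorb all contributions with $|w|\le k-1$ (which crowd into the neighborhood of the root together with the proximal arcs of the level-$k$ bubble) into the single finite set $B(\mathfrak o,s_{k-1}+\ell_k)$, while keeping the $\mathfrak N(w,\ell_k)$ with $|w|\ge k$ pairwise disjoint (they are disjoint because $\ell_k<a_{j+1}$ for every $j\ge k$) and disjoint from this root ball.

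Finally I exploit the tree homogeneity below level $k$. Each $\mathfrak N(w,\ell_k)$ with $|w|\ge k$ consists of the three-cycle $\mathfrak b(w)$ together with three arcs of length $\ell_k$, one on each adjacent bubble; because $\ell_k<a_j$ for every $j\ge k+1$ and because $b_j=3$ throughout, the bijection $\iota_{k+1}^w\colon\mathfrak N(w,\ell_k)\to\mathfrak N(\mathbf 1^k,\ell_k)$ identifying corresponding arms and corresponding cycle vertices is a label-preserving graph isomorphism. I would then verify that each generator $\alpha^{\pm 1}$, $\beta^{\pm 1}$ acts on $\mathfrak N(w,\ell_k)$ through a rule intrinsic to the labelled local structure (shift by one step along the current arm, wrapping across $\mathfrak b(w)$ when the endpoint is reached; a three-cycle on $\mathfrak b(w)$ when the vertex currently sits there), and that this rule therefore commutes with $\iota_{k+1}^w$. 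Composing, $g|_{\mathfrak N(w,\ell_k)}=(\iota_{k+1}^w)^{-1}\,g|_{\mathfrak N(\mathbf 1^k,\ell_k)}\,\iota_{k+1}^w$ for all $|w|\ge k$, so $g$ is determined by the pair $\bigl(g|_{B(\mathfrak o,s_{k-1}+\ell_k)},\,g|_{\mathfrak N(\mathbf 1^k,\ell_k)}\bigr)$, yielding $|U_k(\ell_k,0)|\le |B(\mathfrak o,s_{k-1}+\ell_k)|!\cdot|\mathfrak N(\mathbf 1^k,\ell_k)|!$. The most delicate step is this last synchronization: one must carefully track trajectories that leave one arm, traverse $\mathfrak b(w)$ through an interleaving of $\alpha^{\pm 1}$ and $\beta^{\pm 1}$, and re-enter on a different arm, and verify that the resulting reshuffling depends only on the word (identical across all $|w|\ge k$) and not on the ambient depth in the tree.
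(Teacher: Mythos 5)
Your first two steps are correct and are essentially the paper's own mechanism: because the trajectory of $\mathfrak m_k$ stays in $B_k(\ell_k)$ and $\beta$ fixes bubble-interior vertices, the position of any marker sitting deep inside a bubble at time $i$ is $\alpha^{n^{(i)}_\alpha}y$ with $|n^{(i)}_\alpha|\le\ell_k$ and $n^{(q)}_\alpha=0$, so all such vertices are fixed by $g$ and the support of $g$ is confined to the $\ell_k$-neighborhoods of branching cycles together with a region near the root. (Two small inaccuracies here: each adjacent bubble contributes a two-sided segment, so $\mathfrak N(w,\ell_k)$ has six arms rather than three, and $\alpha$ does not ``wrap across $\mathfrak b(w)$'' --- it continues along the same bubble through the branching-cycle vertex, only $\beta$ moves markers between bubbles. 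Also, your assertion that all contributions with $|w|\le k-1$ fit inside $B(\mathfrak o,s_{k-1}+\ell_k)$ is false for $|w|=k-1$: those branching cycles lie at distance about $s_k>s_{k-1}+\ell_k$ from the root. This off-by-one is already present in the statement of the lemma and is harmless, since the level-$k$ neighborhoods can simply be included in the synchronized family without changing the bound, but you should not assert the containment.)

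The genuine gap is in the synchronization step, which is exactly what the factorial bound rests on. You argue that each generator ``acts on $\mathfrak N(w,\ell_k)$ through a rule intrinsic to the labelled local structure'' and hence ``commutes with $\iota^w_{k+1}$'', and then compose; but the generators do not preserve $\mathfrak N(w,\ell_k)$ (a marker at the tip of an arm is pushed to distance $\ell_k+1$), so composing restrictions is not legitimate, and you yourself flag the needed verification --- trajectories that leave an arm, cross $\mathfrak b(w)$, and re-enter elsewhere --- without carrying it out. The missing ingredient is a second use of the defining constraint of $U_k(\ell_k)$: one must show that a marker starting in $\mathfrak N(w,\ell_k)$ stays, at every intermediate time, inside a chart around $\mathfrak b(w)$ (say $\mathfrak N(w,2\ell_k)$) that is label-isomorphic uniformly in $w$, and in particular never reaches any other branching cycle, where the depth-dependent lengths $a_j$ would destroy the identification. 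This is where the constraint enters again: between consecutive visits of that marker to $\mathfrak b(w)$ only $\alpha$-letters move it, so its displacement along its current bubble over any time interval is an increment $n^{(i_2)}_\alpha-n^{(i_1)}_\alpha$ of the net count, hence at most $2\ell_k<a_j-2\ell_k$ for all levels $j\ge k$; equivalently, it is the content of the paper's ``buffer'' argument that the deep bubble segments move as blocks following $\mathfrak m_k$ and at all times cover a core separating the branching cycles, so no other marker can cross. With that confinement established, your induction on the letters of the word does yield $g|_{\mathfrak N(w,\ell_k)}=(\iota^w_{k+1})^{-1}\,g|_{\mathfrak N(\mathbf 1^k,\ell_k)}\,\iota^w_{k+1}$ and the counting follows; without it, local isomorphism of the small sets $\mathfrak N(\cdot,\ell_k)$ alone does not imply the claimed identity.
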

\begin{proof} Let $g=\gamma_q\dots \gamma_1\in U_k(\ell_k,0)$. From the definition of $U_k(\ell_k,0)$, it follows that, for any $w$ with $|w|=j-1\ge k-1$, 
each of the  bubble segment $$I_k(w)=B((w,a_j/2),a_j/2-\ell_k-1)$$ centered at  the middle points $(w, a_j/2)$ and of radius $a_j/2-\ell_k-1$
are left point-wise invariant by $g$. In fact, each of these segments is moved as a block throughout the sequence of steps 
$\gamma_j\dots,\gamma_1$, $1\le j\le q$  without escaping the full bubble segment containing $(w,a_j)$. Moreover, the translations of these segments 
are all following the moves of the point $\mathfrak m_k=(\mathbf 1^{k-1},a_k/2)$. This implies that these segments acts as buffers restricting the action of $g$
on points belonging to the various "connected components" of the complement of the union of these segments. Namely, the action of $g$ has to internal on each of these components. The statement of the lemma captures this fact and enumerate these components.    
\end{proof}

We now turn to the treatment of the pocket extension of $X_{\mathbf a}$.  Let us call the resulting group $\widetilde{\Gamma}_{\mathbf a}$. It is defined by the labelled graph $X^*_{\mathbf a}$ depicted schematically on Figure  \ref{B*} and generated by three elements $\tau,\alpha,\beta$.  The transposition $\tau$ transposes 
the new  vertex $*$ and  the root $\mathfrak o$ of $X_{\mathbf a}$. It act trivially at each of the other vertices (each carry a self-loop labelled $\tau$). 
At the new vertex $*$, the labellings $a$ and $b$ are carried by self-loops, i.e., $\alpha$ and $\beta$  act trivially at $*$. 
\begin{figure}[h]
\begin{center}\caption{Sketch 
of the Schreier graph $X^*_\mathbf a$ used to define $\widetilde{\Gamma}_{\mathbf a}$.}\label{B*}

\begin{picture}(150,60)(0,0) 

\put(30,0){\put(0,37){\makebox(0,0){$o$}} \put(-10,37){\makebox(0,0){$*$}}\put(0,30){\circle*{2}}
\put(-10,30){\circle*{2}}
\put(10,30){\oval(20,4.2)}
\put(22,30){\circle{4}}

\put(37.5,33){\oval(30,4.2)}
\put(37.5,27){\oval(30,4.2)}
\put(54,35.5){\circle{4}}
\put(54,24.2){\circle{4}}
\put(-9,32){\vector(1,0){8}}
\put(-1,28){\vector(-1,0){8}}
\put(-6,23){\makebox(0,0){$\tau$}}

\multiput(56,30)(10,0){7}{\line(1,0){5}} 

\put(10,39){\makebox(0,0){$a$}}
\put(8,35){\vector(1,0){8}}

\put(48,43){\makebox(0,0){$b$}}
\put(51,38){\vector(1,1){2}}}

\end{picture}\end{center}\end{figure}
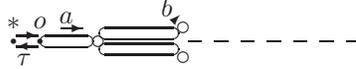

From this description, it should be rather obvious that exactly the same argument use for $\Gamma_{\mathbf a}$ applies to $\widetilde{\Gamma}_{\mathbf a}$
modulo some very small adaptation. For clarity, we  give explicitly the definition definition of the the test functions for $\widetilde{\Gamma}_{\mathbf a}$.
When working n $\widetilde{\Gamma}_{\mathbf a}$, the symmetric probability measure $\mathbf u$ is the uniform measure on $\{\tau,\alpha^{\pm 1},\beta^{\pm 1}\}$.

\begin{definition}
For each $k\ge 8$ and $\ell \in (0, (a_k/2)-1)$, consider the set $\widetilde{U}_k(\ell)$ of all elements $g\in \widetilde{\Gamma}_{\mathbf a}$ such that
there exists a sequence $\gamma_1,\dots \gamma_q\in \{\tau, \alpha^{\pm 1},\beta^{\pm 1}\}$ such that $g=\gamma_q\dots \gamma_1 $
and, for all $1\le j\le q$,  $\gamma_j\dots\gamma_1\mathfrak m_k \in B_k( \ell)$. 
For any $s\in \{0,1,\dots, \ell_k\}$, with $\ell_k = (a_k/4)-1 $,  define $\tilde{\psi}_{k}$ on $\Gamma_{\mathbf a}$ by
$$\tilde{\psi}_k(g)=\left\{ \begin{array}{cl} 0  &\mbox{ if } g\not\in \widetilde{U}_k(\ell_k)\\
(1- s/\ell_k)_+ & \mbox{ if } g\in \widetilde{U}_k(\ell_k) \mbox{ and }  d(\mathfrak m_k,g\mathfrak m_k)=s.\end{array}\right.$$ 
\end{definition}

\begin{lemma} \label{lem-bubble*}
The statements of {\em Lemma \ref{lem-bubbletest}} and {\em Lemma \ref{lem-testsupport}} apply to the functions $\widetilde{\psi}_k$
on $\widetilde{\Gamma}_{\mathbf a}$ after replacing $X_{\mathbf a}$ by $X^*_{\mathbf a}$ and  $U_k(\ell)$  by $\widetilde{U}_k(\ell)$. Note also that  
in the present case, the ball $B(\mathfrak o, r)$ is a ball in $X^*_{\mathbf a}$ and thus contains the extra vertex $*$. 
\end{lemma}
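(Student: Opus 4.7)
The plan is to observe that the pocket extension $X^{*}_{\mathbf a}$ differs from $X_{\mathbf a}$ only near the root, while the test functions $\tilde\psi_k$ ``live'' on a deep portion of the graph, so the arguments of Lemmas \ref{lem-bubbletest} and \ref{lem-testsupport} transfer almost verbatim once the new generator $\tau$ is shown to be harmless. The pivotal observation is that $\tau$ fixes every vertex of $X^{*}_{\mathbf a}$ except $\mathfrak o$ and $*$, while
$$d_{X^{*}_{\mathbf a}}(\mathfrak m_k,\mathfrak o)\ge a_k/2>a_k/4-1=\ell_k\quad\text{and}\quad d_{X^{*}_{\mathbf a}}(\mathfrak m_k,*)\ge a_k/2+1>\ell_k,$$
so the ball $B_k(\ell_k)$ lies entirely in the common region where $\tau$ acts trivially.

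For the analog of Lemma \ref{lem-bubbletest}, I first partition $\widetilde U_k(\ell_k)$ into the slices $\widetilde U_k(\ell_k,t)=\{g\in\widetilde U_k(\ell_k):g\mathfrak m_k=\alpha^{t}\mathfrak m_k\}$ for $t\in\{0,\pm1,\dots,\pm\ell_k\}$, and note that $\alpha^{-t}\widetilde U_k(\ell_k,t)=\widetilde U_k(\ell_k,0)$; the computation of $\|\tilde\psi_k\|_2^{2}$ is then formally identical. For the Dirichlet form, if $g\in \widetilde U_k(\ell_k)$ has an admissible factorization $g=\gamma_q\cdots\gamma_1$, prepending $\tau$ still yields an admissible factorization since the new prefix satisfies $\tau g\mathfrak m_k=g\mathfrak m_k\in B_k(\ell_k)$ (because $\tau$ fixes $B_k(\ell_k)$ pointwise); thus $\tau g\in\widetilde U_k(\ell_k)$ and $\tilde\psi_k(\tau g)=\tilde\psi_k(g)$, so the $\tau$-term of $\mathcal E_{\mathbf u}$ vanishes. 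The $\beta^{\pm1}$-term vanishes by the same reasoning as in the original lemma, since $B_k(\ell_k)$ contains no branching-cycle vertex. Only the $\alpha^{\pm1}$-terms contribute, giving exactly the same counting as before, so one obtains $\mathcal E_{\mathbf u}(\tilde\psi_k,\tilde\psi_k)\lesssim \ell_k^{-2}\,|\widetilde U_k(\ell_k)|$ and $\|\tilde\psi_k\|_2^{2}\asymp |\widetilde U_k(\ell_k)|$, up to a harmless change in the explicit constants coming from $|\mathrm{supp}\,\mathbf u|=5$ rather than $4$.

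For the analog of Lemma \ref{lem-testsupport}, the buffer argument is unchanged. For any $g\in\widetilde U_k(\ell_k,0)$ and any admissible factorization $g=\gamma_q\cdots\gamma_1$, each bubble segment $I_j(w)=B((w,a_j/2),a_j/2-\ell_k-1)$ with $|w|=j-1\ge k-1$ is translated as a rigid block along its parent bubble (without ever escaping it) throughout the intermediate steps, because the midpoint of the block tracks $\mathfrak m_k$ and $\mathfrak m_k$ never leaves $B_k(\ell_k)$. Since $\tau$ has support $\{\mathfrak o,*\}$, which lies inside the root region $B(\mathfrak o,s_{k-1}+\ell_k)\subset X^{*}_{\mathbf a}$ and never crosses any of these buffer blocks, inserting $\tau$-letters into the word cannot break the block structure. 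Hence $g$ factors uniquely as a product of permutations supported on the disjoint sets $B(\mathfrak o,s_{k-1}+\ell_k)$ (now including $*$) and $\mathfrak N(w,\ell_k)$ for $|w|\ge k$, with the factors in the deep regions all determined by the factor on $\mathfrak N(\mathbf 1^{k},\ell_k)$ via the maps $\iota_{k+1}^{w}$, and the claimed cardinality bound
$$|\widetilde U_k(\ell_k,0)|\le \bigl(|B(\mathfrak o,s_{k-1}+\ell_k)|!\bigr)\bigl(|\mathfrak N(\mathbf 1^{k},\ell_k)|!\bigr)$$
follows.

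The only point requiring any care, and thus the ``main'' (mild) obstacle, is checking that the insertion of $\tau$ into words does not create new ways to mix the deep regions. This is settled by the two quantitative separations noted at the outset: $\tau$ is localized near the root, the buffer bubbles at levels $\ge k$ shield the deep components from anything happening near the root, and the test function $\tilde\psi_k$ only records the position of $\mathfrak m_k$, which $\tau$ leaves untouched.
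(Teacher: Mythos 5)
Your proposal is correct and follows exactly the route the paper intends: the paper offers no separate argument for Lemma \ref{lem-bubble*} beyond the remark that the proofs of Lemmas \ref{lem-bubbletest} and \ref{lem-testsupport} transfer ``modulo some very small adaptation,'' and your verification that $\tau$ fixes $B_k(\ell_k)$ pointwise (so $\tilde\psi_k(\tau g)=\tilde\psi_k(g)$ and the $\tau$-terms of the Dirichlet form vanish) and that $\mathrm{supp}(\tau)=\{\mathfrak o,*\}$ sits inside the root ball $B(\mathfrak o,s_{k-1}+\ell_k)$ of $X^*_{\mathbf a}$ (so the buffer/block decomposition and the factorial count are unaffected) is precisely that adaptation. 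Your explicit flagging of the harmless constant change from $|\mathrm{supp}\,\mathbf u|=5$ is a fair and accurate refinement, not a deviation.
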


\begin{remark} It follows from Lemma \ref{lem-bubble*} that we have exactly the same upper-bound on the spectral profiles of the group 
$\Gamma_{\mathbf a}$ and $\widetilde{\Gamma}_{\mathbf a}$. Comparing with the results in \cite{SCZ-Rev},  this is also essentially the same upper-bound than for the permutation wreath-product  $\mathbb Z\wr_{X_{\mathbf a}} \Gamma_{\mathbf a}$. 
\end{remark}

Let $V_{\mathbf a}(t)=|B(\mathfrak o,t)|$ the volume of the ball or radius $t$ at the root on $X^{\mathbf a}$.
This volume is given by
$$V_{\mathbf a} (t)= \sum_1^{k-1} 2 a_j 2^{j-1} + 2(t-s_{k-1})2^{k-1}  \mbox{ for }   s_{k-1}\le t \le s_k.$$ 
This is the quantity that play a role when applying Corollary \ref{cor-*S} to obtain a lower bound on the isoperimetric and spectral profiles of $\widetilde{\Gamma}_{\mathbf a}$.  Namely,
$$\Lambda_{1,\widetilde{\Gamma}_{\mathbf a}} (v)\ge \frac{c}{r} \mbox{ whenever } v\le   \sqrt{V_{\mathbf a}(r)!}$$
which gives 
$$\Lambda _{1,\widetilde{\Gamma}_{\mathbf a}}(v)\gtrsim \frac{1}{V_{\mathbf a} ^{-1}( \frac{\log (1+v)}{\log(1+\log (1+v))})} \mbox{ where } V_{\mathbf a}^{-1}(v)=\inf\{ s: V_{\mathbf a}(s)\ge v\}.$$
The upper bound on $\Lambda_{2,\widetilde{\Gamma}_{\mathbf a}}(v) $ obtained above is based on the functions
$$W_{\mathbf a}(t) = \sum_1^{k-1} 2 a_j 2^{j-1} + (a_k/2) 2^{k-1}  \mbox{ for }   a_{k-1}< 2t \le a_{k}$$  and 
$$A_{\mathbf a}(t)= a_k/2   \mbox{ for }   a_{k-1}< 2t \le a_{k}.$$
It  reads
$$\Lambda_{2,\widetilde{\Gamma}_{\mathbf a}}(v)\le \frac{C}{r^2} \mbox{ whenever } v\ge   (W_{\mathbf a}(r)!)(A_{\mathbf a}(r)!) .$$
Because the factor  $A_{\mathbf a}(r)!$ is much smaller than the other factor and $r$ is on the scale of $\log v$, this gives
$$\Lambda_{2,\widetilde{\Gamma}_{\mathbf a}}(v)\le \frac{C}{[W_{\mathbf a}^{-1}( \frac{\log (1+v)}{\log(1+\log (1+v))})]^2}  \mbox{ where } W_{\mathbf a}^{-1}(v)=\sup\{ s: W_{\mathbf a}(s)\le v\} .$$ 

Taking into account  the left-hand side inequality in (\ref{Cheeger}),
the lower bound on the isoperimetric profile and the upper-bound on the spectral profile match-up rather well as long as  $a_k\simeq s_k$ 
(i.e., the sum $s_k$ is approximately equal to it last term $a_k$).  In the following theorem, we focus on the case when $a_k \simeq 2^{\kappa k}$ for some $\kappa\in (0,\infty)$.   In this case, we have  $s_k\simeq a_k$ and 
$$V_{\mathbf a} (r)\simeq W_{\mathbf a}(r) \simeq  r^{\frac{\kappa+1}{\kappa}}.$$

\begin{theorem} \label{bubblebound}  
Let $\widetilde{\Gamma}_{\mathbf{a}}$ be the group associated with $X^*_\mathbf a$, the Schreier graph pocket extension of  $X_{\mathbf a}$. Under the assumption that 
 $a_k\simeq 2^{\kappa k}$ for some $\kappa\in (0,1)$, the isoperimetric and spectral profiles satisfy 
$$\Lambda_{1,\widetilde{\Gamma}_{\mathbf a}}(v)^2\simeq  \Lambda_{2,\widetilde{\Gamma}_{\mathbf a}}(v)\simeq  \left(\frac{\log (1 +\log(1+v))}{\log (1+v)} \right)^{2\kappa/(\kappa+1))}.$$
The return probability function $\Phi_{\widetilde{\Gamma}_{\mathbf a}}$ satisfies
\[
\Phi_{\widetilde{\Gamma}_{\mathbf a}}(n)\simeq \exp\left(-n^{\frac{\kappa+1}{3\kappa+1}}(\log n)^{\frac{2\kappa}{3\kappa+1}}\right).
\]
\end{theorem}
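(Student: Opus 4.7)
The plan is to combine the two general profile estimates already recorded in the paragraph preceding the theorem with the asymptotic formula $V_{\mathbf a}(r)\simeq W_{\mathbf a}(r)\simeq r^{(\kappa+1)/\kappa}$ that holds under the assumption $a_k\simeq 2^{\kappa k}$, and then feed the resulting spectral profile into the Coulhon--Grigor'yan integral formula to obtain $\Phi_{\widetilde\Gamma_{\mathbf a}}$. First I would note that since $a_k\simeq 2^{\kappa k}$, we have $s_k=\sum_{j=1}^k a_j\simeq a_k$, and then the explicit formulas for $V_{\mathbf a},W_{\mathbf a},A_{\mathbf a}$ give $V_{\mathbf a}(r)\simeq W_{\mathbf a}(r)\simeq r^{(\kappa+1)/\kappa}$, $A_{\mathbf a}(r)\simeq r$; in particular $\log(W_{\mathbf a}(r)!\cdot A_{\mathbf a}(r)!)\simeq W_{\mathbf a}(r)\log W_{\mathbf a}(r)$, so the factor $A_{\mathbf a}(r)!$ is negligible for this computation. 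Inverting yields $V_{\mathbf a}^{-1}(t)\simeq W_{\mathbf a}^{-1}(t)\simeq t^{\kappa/(\kappa+1)}$.

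Substituting these into the two bounds displayed just before the theorem produces
\[
\Lambda_{1,\widetilde\Gamma_{\mathbf a}}(v)\gtrsim \left(\frac{\log(1+\log(1+v))}{\log(1+v)}\right)^{\kappa/(\kappa+1)},\qquad
\Lambda_{2,\widetilde\Gamma_{\mathbf a}}(v)\lesssim \left(\frac{\log(1+\log(1+v))}{\log(1+v)}\right)^{2\kappa/(\kappa+1)}.
\]
By the Cheeger inequality \eqref{Cheeger}, $\Lambda_2\ge \tfrac12\Lambda_1^2$, so the lower bound on $\Lambda_1$ automatically gives the matching lower bound on $\Lambda_2$. Conversely, $\Lambda_2\le \Lambda_1$, so the upper bound on $\Lambda_2$ does not directly give the matching upper bound on $\Lambda_1$. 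To close this gap I would note that the very same test function $\widetilde\psi_k$ of Lemma \ref{lem-bubble*}, when estimated in the $L^1$-Dirichlet form rather than the $L^2$-Dirichlet form, yields $\sum_{g,s}|\widetilde\psi_k(sg)-\widetilde\psi_k(g)|\mathbf u(s)\lesssim \ell_k^{-1}\|\widetilde\psi_k\|_1$ (the computation is a direct telescoping along the orbit of $\mathfrak m_k$, using the support description of Lemma \ref{lem-testsupport}), and hence $\Lambda_{1,\widetilde\Gamma_{\mathbf a}}(v)\lesssim r^{-1}$ as soon as $v$ exceeds the support size. With the same support bound as for $p=2$, this gives the matching $L^1$ upper bound. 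Together with the lower bound, this proves the first display in the theorem.

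Finally, for the return-probability estimate I would invoke the result of Coulhon--Grigor'yan (recalled at the end of Section~\ref{sec-iso}): if $\psi$ solves $t=\int_1^{1/\psi}\frac{ds}{s\Lambda_{2,\widetilde\Gamma_{\mathbf a}}(s)}$ then $\Phi_{\widetilde\Gamma_{\mathbf a}}(t)\lesssim \psi(t)$, and the converse direction (which only uses a regularity assumption on $\Lambda_2$ that is evidently satisfied here) yields the matching lower bound. With $\Lambda_{2}(s)\simeq (\log\log s/\log s)^{2\kappa/(\kappa+1)}$, the change of variable $u=\log s$ gives
\[
t\simeq \int_0^{\log(1/\psi)}\left(\frac{u}{\log u}\right)^{2\kappa/(\kappa+1)}du\simeq \frac{[\log(1/\psi)]^{(3\kappa+1)/(\kappa+1)}}{[\log\log(1/\psi)]^{2\kappa/(\kappa+1)}}.
\]
Solving for $\log(1/\psi)$ (using that $\log\log(1/\psi)\simeq \log t$) yields $\log(1/\psi(t))\simeq t^{(\kappa+1)/(3\kappa+1)}(\log t)^{2\kappa/(3\kappa+1)}$, which is the claimed estimate for $\Phi_{\widetilde\Gamma_{\mathbf a}}$.

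The main technical nuisance I expect is the $L^1$ upper bound on the test function, which is not quite recorded in the lemmas as stated but can be read off from the same construction; once that is in hand, everything reduces to elementary asymptotics. A secondary subtlety is the Stirling-type inversion of the factorial inside the lower bound from Corollary \ref{cor-*S}, where one must verify that $\log\sqrt{V_{\mathbf a}(r)!}\simeq V_{\mathbf a}(r)\log V_{\mathbf a}(r)$ gives the claimed $\log(1+v)/\log(1+\log(1+v))$ scaling after inversion; for $V_{\mathbf a}(r)$ polynomial in $r$ this is routine.
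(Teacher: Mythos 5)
Your proposal is correct and is essentially the argument the paper intends: the lower bounds come from Corollary \ref{cor-*S}, the upper bound on $\Lambda_{2,\widetilde{\Gamma}_{\mathbf a}}$ from the test functions of Lemma \ref{lem-bubble*}, the identification $V_{\mathbf a}\simeq W_{\mathbf a}\simeq r^{(\kappa+1)/\kappa}$ under $a_k\simeq 2^{\kappa k}$, and the Coulhon-type integral formula (with the converse from \cite{SCZ-AOP2016}) for $\Phi_{\widetilde{\Gamma}_{\mathbf a}}$; your asymptotic computations are accurate. The one point to correct is your claim that the upper bound on $\Lambda_2$ ``does not directly give the matching upper bound on $\Lambda_1$'': the \emph{left-hand} side of (\ref{Cheeger}), $\tfrac12\Lambda_{1}^2\le \Lambda_{2}$, gives $\Lambda_{1,\widetilde{\Gamma}_{\mathbf a}}(v)\le \bigl(2\Lambda_{2,\widetilde{\Gamma}_{\mathbf a}}(v)\bigr)^{1/2}\lesssim \left(\frac{\log(1+\log(1+v))}{\log(1+v)}\right)^{\kappa/(\kappa+1)}$, which already matches the $L^1$ lower bound; this is exactly what the paper's remark ``taking into account the left-hand side inequality in (\ref{Cheeger})'' refers to. Consequently the separate $L^1$-Dirichlet estimate for $\widetilde\psi_k$ that you single out as the main technical nuisance is unnecessary (though your sketch of it is correct and would work), and with that simplification the proof reduces, as in the paper, to the two displayed profile bounds plus elementary asymptotics and the profile-to-return-probability transfer.
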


It is perhaps surprising that the behavior of $\Lambda_{p,\Gamma_\mathbf a}$, $p=1,2$, and of $\Phi_{\Gamma_\mathbf a}$, for the bubble group $\Gamma_\mathbf a$ itself are not yet entirely understood. Because $\Gamma_{\mathbf a}$ is a subgroup of $\widetilde{\Gamma}_{\mathbf a}$ (and because the same arguments apply directly in both cases),  $\Lambda_{p,\Gamma_\mathbf a} \lesssim \Lambda_{p,\widetilde{\Gamma}_{\mathbf a}}$ and $\Phi_{\widetilde{\Gamma}_\mathbf a}\lesssim\Phi_{\Gamma_\mathbf a} $.   

Regarding the sequence $\mathbf a=(a_i)$ that defines $X_\mathbf a$, $\Gamma_\mathbf a$ and $\widetilde{\Gamma}_\mathbf a$, it is possible to obtain relatively good results for sequences $\mathbf a$ growing faster a than $ 2^{\kappa k}$. See \cite{SCZ-Rev} for related computations. Understanding the behavior of random walk on $\Gamma_\mathbf a$ and $\widetilde{\Gamma}_\mathbf a$ when the growth of $\mathbf a$ is slower than exponential appears to be a harder challenge.  The method explained here provides upper and lower bounds for $\Lambda_{p,\widetilde{\Gamma}_\mathbf a}$ and $ \Phi_{\widetilde{\Gamma}_\mathbf a}$ but theses bounds do not match. Again, see \cite{SCZ-Rev} for related computations.

\bibliographystyle{plain}

\end{document}